\DeclareRobustCommand\full  {\tikz[baseline=-0.6ex]\draw[thick] (0,0)--(0.5,0);}
\DeclareRobustCommand\dotted{\tikz[baseline=-0.6ex]\draw[thick,dotted] (0,0)--(0.54,0);}
\DeclareRobustCommand\dashed{\tikz[baseline=-0.6ex]\draw[thick,dashed] (0,0)--(0.54,0);}
\newlength\replength
\definecolor{darkred}{rgb}{0.6,0,0}
\definecolor{darkgreen}{rgb}{0,0.5,0}
\definecolor{darkmagenta}{rgb}{0.5,0,0.5}
\def\@cite#1#2{\textup{[{#1\if@tempswa , #2\fi}]}}
\DeclarePairedDelimiter\abs{\lvert}{\rvert}
\DeclarePairedDelimiterX\Set[1]\{\}{%
	
	#1
}
\newcommand{\R}{\mathbb R}
\newcommand{\N}{\mathbb N}
\newcommand{\dott}{\, \cdot\,}
\newcommand{\Z}{\mathbb{Z}}
\DeclareMathOperator{\lip}{Lip}
\DeclareMathOperator{\bv}{BV}
\newcommand{\sgn}{\mathop\mathrm{sgn}}
\newcommand{\norma}[1]{{\left\|#1\right\|}}
\renewcommand{\d}[1]{\mathinner{\mathrm{d}{#1}}}
\DeclareMathOperator{\TV}{TV}
\DeclareMathOperator{\lipR}{Lip(\R)}
\newcommand{\D}{\Delta}
\newcommand{\modulo}[1]{{\left|#1\right|}}
\renewcommand{\L}[1]{\mathbf{L^#1}}
\newtheorem{theorem}{Theorem}[section]
\newtheorem{lemma}[theorem]{Lemma}
\newtheorem{definition}[theorem]{Definition}
\newtheorem{remark}[theorem]{Remark}
\newtheorem{corollary}[theorem]{Corollary}
\numberwithin{equation}{section}     
\begin{document}
	\title[Well-posedness and error estimates for nonlocal conservation laws]{Well-posedness and Error estimates  for Coupled systems of nonlocal conservation laws}
	\author[A. Aggarwal]{Aekta Aggarwal}
	\address[Aekta Aggarwal]{\newline
		Operations Management and Quantitative Techniques, Indian Institute of Management, Prabandh Shikhar, Rau--Pithampur Road, Indore, Madhya Pradesh, 453556, India}
	\email[]{\href{aektaaggarwal@iimidr.ac.in}{aektaaggarwal@iimidr.ac.in}} 
	
	\author[H. Holden]{Helge Holden}
	\author[G. Vaidya]{Ganesh Vaidya}
	\address[Helge Holden, Ganesh Vaidya]{\newline
		Department of Mathematical Sciences,
		NTNU  Norwegian University of Science and Technology,
		NO--7491 Trondheim, Norway}
	\email{\href{helge.holden@ntnu.no,ganesh.k.vaidya@ntnu.no}{helge.holden@ntnu.no,ganesh.k.vaidya@ntnu.no}}
	\urladdr{\href{https://www.ntnu.edu/employees/helge.holden,https://www.ntnu.edu/employees/ganesh.k.vaidya}{https://www.ntnu.edu/employees/helge.holden}, \href{https://www.ntnu.edu/employees/ganesh.k.vaidya}{https://www.ntnu.edu/employees/ganesh.k.vaidya}}

	\date{\today} 
	
	\subjclass[2020]{Primary:  35L65; Secondary: 65M25, 35D30,  65M12, 65M15}
	
	\keywords{nonlocal conservation laws, traffic flow, convergence rate, hyperbolic systems, adapted entropy}
	
	\thanks{The work was carried out during GV's tenure of the ERCIM ‘Alain Bensoussan’ Fellowship Programme, and AA's research visit at NTNU. It was supported in part by the project \textit{IMod --- Partial differential equations, statistics and data:
			An interdisciplinary approach to data-based modelling}, project number 325114, from the Research Council of Norway, and by AA's Faculty Development Allowance, funded by IIM Indore.}
	
	%\dedicatory{}
	
	\begin{abstract}
		This article deals with the error estimates for numerical approximations of the entropy solutions of coupled systems of nonlocal hyperbolic conservation laws.
		%with  local part of the  flux.
		%{, being possibly rough in the space variable. 
			The systems can be strongly coupled through the nonlocal coefficient present in the convection term. A fairly general class of fluxes is being considered, where the local part of the flux can be discontinuous at infinitely many points, with possible accumulation points. The aims of the paper are threefold: 1. Establishing existence of entropy solutions with rough local flux for such systems, by deriving a uniform $\bv$ bound on the numerical approximations; 2. Deriving a general Kuznetsov-type lemma (and hence  uniqueness) for such systems with both smooth and rough local fluxes;  3. Proving the convergence rate of the finite volume approximations to the entropy
			solutions of the system as $1/2$ and $1/3$, with homogeneous (in any dimension) and rough local parts (in one dimension), respectively. Numerical experiments are included to illustrate the convergence rates.
			
		\end{abstract}
		
		\maketitle
		
		%---------------- SECTION
		\section{Introduction}
		We here study the initial-value problem (IVP) for the coupled system of nonlocal hyperbolic conservation laws with discontinuous flux given by
		\begin{align}
			\label{eq:umulA}
			\partial_t \boldsymbol{U} + \partial_x \Big(\boldsymbol{F}(\boldsymbol{\sigma}(x)\circ\boldsymbol{U}))\circ \boldsymbol{\nu}(\boldsymbol{\mu} \circledast \boldsymbol{U})\Big) &= \boldsymbol{0},
			\,\quad (t,x) \in (0,T)\times\R, \\  \notag
			\boldsymbol{U}(0,x)&=\boldsymbol{U}_0(x), \quad x \in\R.
		\end{align} 
		Here the unknown is $\boldsymbol{U}=(U^1,\ldots,U^N): [0,\infty)\times \R \rightarrow \R^N$,  with $\circ$ denoting the Hadamard (component-wise) product. Furthermore, $\boldsymbol{\sigma}=( \sigma^1,\ldots, \sigma^N)\colon\R \rightarrow \R^N$, where each component
		$\sigma^k$ is allowed to have an infinitely many discontinuities, including accumulation points. The system is coupled due
		to the nonlocal
		terms  $\boldsymbol{\mu} \circledast  \boldsymbol{U}:\R \rightarrow \R^{N^2}$, with $\boldsymbol{\mu}$ a smooth $N\times N$ matrix, and $(\boldsymbol{\mu}\circledast\boldsymbol{U})^k:= \left(\mu^{k,1}*U^1,\ldots,\mu^{k,N}*U^N\right)$ with convolution $*$ over space only.  We have 
		$\boldsymbol{\boldsymbol{\nu}}=(\nu^1,\ldots,\nu^N)$, with each $\nu^k:\R^N \rightarrow\R$.
		The function $\boldsymbol{F}=(f^1,\ldots,f^N)\colon\R \rightarrow \R^N$ is Lipschitz in each component, with $\boldsymbol{F}(\boldsymbol{0})=\boldsymbol{0}$ and $\boldsymbol{\nu}(\boldsymbol{0})=0$, and $\abs{\boldsymbol{\sigma}}$ is strictly bounded away from zero.  Precise assumptions are given below.
		
		Hyperbolic conservation laws are partial differential equations (PDEs) that describe the conservation of $\boldsymbol{U}$ in a physical system, where the temporal rate of change of $\boldsymbol{U}$  is exactly balanced by the spatial rate of change of the flux function. The scalar case, i.e., with $N=1$, is well understood, also in multidimensions, while the case of systems, i.e., $N>1$, is considerably harder even in one dimension.  The use of Hadamard products restricts the coupling in \eqref{eq:umulA} to the convolution term $\boldsymbol{\nu}(\boldsymbol{\mu} \circledast \boldsymbol{U})$, allowing us to use the scalar theory. Hyperbolic conservation laws were originally introduced in the context of fluid dynamics, however, recently these equations have been applied as models covering a wide range of phenomena, and this has necessitated a considerably wider class of flux functions. Here we include two 
		effects---we let the flux function depend on space in a discontinuous manner through the function $\boldsymbol{\sigma}(x)$, and, furthermore, we let the flux function depend on a local average of  (the full vector) $\boldsymbol{U}$ in the form of the convolution term $\boldsymbol{\nu}(\boldsymbol{\mu} \circledast \boldsymbol{U})$.
		
		Some of the recent applications include crowds~\cite{CGL2012,CL2011,CHM2011}, traffic~\cite{BG2016,
			CHM2011}, opinion formation~\cite{ANT2007}, sedimentation
		models~\cite{BBKT2011}, structured population dynamics~\cite{Per2007}, supply chain models~\cite{CHM2011}, granular material dynamics~\cite{AS2012} and  conveyor belt dynamics~\cite{GHS+2014}. 
		% \\***PLEASE CHECK THIS**\\
		The case of smooth $\boldsymbol{F}$ has been well studied in literature, a non-exhaustive list being \cite{ACT2015, ACG2015,AG2016,BG2016,FGKP2022,CGL2012,CL2011,CHM2011}, and various references therein.

		Let us explain why it is necessary to study the extensions given in this paper. As we know, roads may experience rapidly changing local conditions, and when we model traffic flow by hyperbolic conservation laws, we allow for this by having a non-smooth, spatially depending function 
		$\boldsymbol{\sigma}(x)$ in the flux function.  Furthermore, more practical problems like the multi-agent systems, as well as laser technology, dynamics of multiple crowds/cars, lane formation, sedimentation and conveyor belts, see, for example, \cite{CMR2016, GKLW2016,CM2015}, one needs to analyse coupled systems of nonlocal conservation laws with discontinuous flux.
		
		The literature of the following \textit{local} counterpart of \eqref{eq:umulA} with $N=1$, viz. 
		\begin{equation*}
			U_t+(F(x,U)\nu(U))_x=0,
		\end{equation*} 
		is quite rich, with the results spanning to $F$ being a nonlinear function of $U$ and having general spatial discontinuities; see, for example, \cite{AMV2005,AJV2004,AMV2007, AKR2010,Per2007,GTV2022a,GTV2022,AVV2021,ASSV2020,GJT2020} and references therein. 
		
		However, the nonlocal counterpart \eqref{eq:umulA}, remains far from being settled, with the only exceptions being, \cite{AV2023}, \cite{FCV2023} and \cite{KP2021}. References \cite{KP2021,FCV2023} settle the problem for the case $F(x,U)= s(x)U$, with \cite{KP2021} establishing this via a fixed-point argument with a bounded discontinuous $s$ and monotone nonsymmetric kernels, and \cite{FCV2023} settling the same via a convergent numerical scheme for monotonically decreasing $s$, with a single spatial discontinuity and $\mu$ being a downstream kernel. The question of well-posedness for the cases of $F(x,U)= s(x)U$ with $ s$ having non discrete discontinuities, and for the case of $F(x,U)= s(x)f(U)$ with a nonlinear $f$ and $ s$ having single or finitely many spatial discontinuities, with or without any constraints on monotonicity of $\nu$ or $\mu$, remained unexplored and unsettled for a long time. This question was recently settled in \cite{AV2023}, where the well-posedness was explored in a very general setup with $F(x,U)=f( s(x)U),$ where $f$ is nonlinear and there are no additional constraints on $\nu$, $\mu$ and the discreteness or monotonicity of spatial discontinuities.
		
		The existence of the entropy solutions for \eqref{eq:umulA} for $N\ge 1$ with sufficiently smooth $\boldsymbol{\sigma}$ has been studied in \cite{ACG2015,BBKT2011,GHS+2014,CG2023,CM2015}, while the uniqueness has been settled in \cite{CM2015,CR2018,CMR2016}, for example. The question of existence as well as uniqueness of the entropy solutions for the system  \eqref{eq:umulA} with $\boldsymbol{\sigma}$ having any kind of spatial discontinuities, remains unexplored and unsettled as of now. This article settles these questions for initial-value problems \eqref{eq:umulA} which we can write on component form as
		\begin{align}
			\label{eq:uA}
			\partial_t U^{k} +\partial_x \Big(f^k( \sigma^k(x)U^k) \nu^k((\boldsymbol{\mu} * \boldsymbol{U})^k)\Big) &=0, \quad (t,x) \in Q_T:=(0,T)\times \R,
			\\ \label{eq:u11A}
			U^{k}(0,x)&=U_0^{k}(x), \quad x \in \R,
		\end{align} 
		for each $k=1,2,\ldots,N$.  Here $T$ is a given positive time.  The assumptions that $\sigma^k\in \bv(\R,\R^+)$ and $\inf_{x\in\R}\sigma^k(x)>0$
		are justified, being the essential technical assumptions required for well-posedness of scalar \textit{local} counterpart of \eqref{eq:uA}--\eqref{eq:u11A}, see \cite{GTV2022a,GTV2022,Pan2009,Tow2020} and references therein. As stated earlier, the system \eqref{eq:uA}--\eqref{eq:u11A} has been studied in \cite{KP2021} only for the linear case $ f^k(u)=u,$ using a fixed point argument. However, in the setup of this paper, $f^k$ is nonlinear and consequently, there can be multiple weak solutions, like in the local case. Hence, an entropy condition is required to single out the unique solution.
		
		In this article, this question will be settled in Section \ref{uni},  where it will be shown that any two  \textit{entropy} solutions (cf.~\eqref{kruz2}) of \eqref{eq:uA} corresponding to the same initial data \eqref{eq:u11A}, are in fact equal, and the approximate solutions obtained from the finite volume schemes proposed in Section \ref{sec:existence}, satisfy the discrete form of the entropy inequality \eqref{kruz2}. As a result, one will obtain that the approximate solutions converge to the unique entropy solution of \eqref{eq:uA}--\eqref{eq:u11A}. This uniqueness result is novel and is not dealt with in existing uniqueness results of \cite{BKRT2004,BG2016,KP2021,CR2018,CMR2016}, which deal with either local part of the flux being linear in state variable and possibly discontinuous in space variable, or non-linear smooth \textit{local} fluxes only. At the time of writing of this article, this is the first result establishing the existence and uniqueness of solutions of such coupled systems with discontinuous flux. It is to be noted that no technical restrictions on monotonicity of kernel or $\boldsymbol{\sigma}$ have been made in the analysis, and hence the article presents the first result in a very general setting. However, to exercise this method, we need a uniform $\bv$ bound on the entropy solution $\boldsymbol{U}$ and its numerical approximation $\boldsymbol{U}_{\Delta}$, which is one of the objectives of this article.
		
		The final objective of this article is the error analysis of our finite volume approximations, proposed in Section \ref{sec:existence}. To the best of our knowledge, this is the first result towards  error analysis of numerical approximations for coupled systems of nonlocal conservation laws, including the case of smooth $\sigma$. However, the literature on error estimates for local conservation laws is quite rich, see\cite{CS2012,BR2020,KR2001,GTV2022, KAR1994,SAB1997,KUZ1976} for $N=1$. This is achieved by following the techniques of~\cite{KUZ1976}, where the  so-called doubling of  variables argument is exploited to express the error between the entropy solution $U=u$ (i.e., $N = 1$) and its numerical approximation $u_{\Delta}$ in terms of their relative entropy. A first result towards scalar nonlocal conservation laws, i.e., with $N=1$, with a homogeneous smooth local part of the flux, was recently done in \cite{AHV2023}. In general, for any $N>1$, the local counterpart of \eqref{eq:uA} is fairly less explored with a limited list being \cite{BS2020,AMW2021}. This paper establishes  error estimates for $N>1$ for the nonlocal counterpart \eqref{eq:uA} with smooth local fluxes in any dimension, and with a fairly large class of rough local fluxes (precise assumptions described later) in one dimension.
		
		Thus, the novelties of this article for the system of nonlocal conservation laws \eqref{eq:umulA}, where $k^{th}$ equation is given by \eqref{eq:uA}, are as follows:
		\begin{itemize}
			\item Existence of the \textit{adapted entropy} solutions via finite volume approximations.
			\item A Kuznetsov-type lemma and thereby the uniqueness of the entropy solution.
			\item Convergence rate estimates for the proposed numerical schemes.
		\end{itemize}
		
		The paper is organized as follows. In Section~\ref{def}, we introduce some definitions and notation to be used in the article. In Section~\ref{sec:existence}, we present the convergence of the Godunov type and the Lax--Friedrichs type scheme of the approximation of the IVP \eqref{eq:uA}--\eqref{eq:u11A}. In Section \ref{uni}, we prove the uniqueness result, and in Section \ref{sec:error} we discuss the convergence rates. Finally, in Section~\ref{num}, we present some numerical experiments which illustrate the theory.
		%----------------- SECTION
		\section{Definitions and notation}\label{def}
		Throughout this paper, we use the following notation:
		\begin{enumerate}
			% \item 
			\item For $\boldsymbol{z}:=(z^1,\ldots,z^N)\in \R^N,$ let $\norma{\boldsymbol{z}}:=\sum_{k=1}^N\abs{z^k}$ denote the usual $1$-norm.
			\item For $\boldsymbol{f}: = (f^1,\ldots, f^N)\in (L^1(\R))^N,$ $\norma{f}_{(L^1(\R))^N}:=\sum_{k=1}^N \norma{f^k}_{L^1(\R)}$  be the norm on the product space.
			\item For the matrix valued function $\boldsymbol{\mu} \in L^{\infty}(\R;\R^{N^2}),$
			$$
			\norma{\boldsymbol{\mu}}_{(L^{\infty}(\R))^{N^2}}:= \max\limits_{ 1 \leq i,j \leq N} \norma{\mu^{i,j}}_{L^{\infty}(\R)}.
			$$ 
			In addition, if $\boldsymbol{\mu}\in C^2(\R;\R^{N^2})$, then $\boldsymbol{\mu}'\in C^1(\R;\R^{N^2})$ and $\boldsymbol{\mu}'' \in C(\R;\R^{N^2})$ denote the component-wise derivative and second derivative, respectively. 
			\item For $u: \overline{Q}_T\rightarrow \R,$ $\boldsymbol{U}:\overline{Q}_T \rightarrow \R^N$ 
			and $\tau>0$, 
			\begin{align*}
				|u|_{\lip_tL^1_x}&:=\sup_{0\leq t_1<t_2\leq T}\frac{\norma{u(t_1)-u(t_2)}_{L^1(\R)}}{|t_1-t_2|},\\
				|\boldsymbol{U}|_{(\lip_tL^1_x)^N}&:=\max_{1 \leq k\leq N}|U^k|_{\lip_tL^1_x}, \\ 
				|\boldsymbol{U}|_{(L^\infty_t\bv_x)^N}&:=\max_{1 \leq k\leq N}\sup_{t\in[0,T]} TV(U^k(t,\dott)),\\
				\norma{\boldsymbol{U}}_{(L^{\infty}(\overline{Q}_T))^N}&:=\max_{1 \leq k\leq N} \norma{U^k}_{L^{\infty}(\overline{Q}_T)},\\
				\norma{\boldsymbol{U}}_{(L^1(\overline{Q}_T))^N}&:=\sum_{k=1}^{N} \norma{U^k}_{L^1(\overline{Q}_T) },\\
				\gamma(\boldsymbol{U},\tau)&:= \max\limits_{1 \leq k \leq N}\sup_{\substack{
						\abs{t_1-t_2} \leq \tau\\  0\leq t_1\leq t_2\leq T }} \norma{U^k(t_1)-U^k(t_2)}_{L^1(\R)}.
			\end{align*} 
		\end{enumerate}
		We will make the following assumptions throughout this work:
		\begin{enumerate}[(\textbf{H\arabic*})]
			\item \label{H1A}$f^k \in  \lip(\R)$  with $ f^k(0)=0$; 
			\item \label{H2A}$\nu^k \in (C^2 \cap   \bv  \cap \, W^{2,\infty}) (\R^N,\R), \boldsymbol{\mu}\in (C^2 \cap   \bv  \cap \, W^{2,\infty}) (\R,\R^{N^2})$, with $\nu^k(\boldsymbol{0})=0$;
			\item \label{H3A} $\sigma^k\in (\bv \cap L^1)(\R)$ with $\inf\limits_{x\in\R}\sigma^k(x)>0$.
		\end{enumerate}
		\begin{definition}
			A function 
			$\boldsymbol{U}\in (C([0,T];L^1(\R)) \cap L^{\infty}([0,T];\bv(\R)))^{N}$  is an entropy solution of IVP \eqref{eq:uA}--\eqref{eq:u11A}, 
			if for all $k \in \{1,\ldots,N\},$ and for all $\alpha\in \R$, 
			\begin{multline} \label{kruz2}
				%\begin{split}
				\int_{Q_T}\left|U^k(t,x)- \frac{\alpha}{\sigma^k(x)}\right|\phi_t(t,x)  \d x \d t  \\ 
				+ \int_{Q_T}\sgn (\overline{U}^k(t,x)-\alpha) \nu^k((\boldsymbol{\mu}*\boldsymbol{U})^k(t,x))
				(f^k(\overline{U}^k(t,x))-f^k(\alpha))\phi_x(t,x) \d{x} \d{t}\\ 
				-\int_{Q_T} f^k(\alpha) (\sgn (\overline{U}^k(t,x)-\alpha)) \partial_x\nu^k((\boldsymbol{\mu}*\boldsymbol{U})^k(t,x))\phi(t,x)\d{x} \d{t}\\ 
				+\int_{\R} \left|U_0^k(x)- \frac{\alpha}{\sigma^k(x)}\right|\phi(0,x)  \d x\geq 0, 
			\end{multline}
			for all non-negative $\phi\in C_c^{\infty}([0,T)\times \R)$.
		\end{definition}
		The following section  is dedicated to the finite volume schemes approximating the  above defined adapted entropy solutions for the IVP \eqref{eq:uA}--\eqref{eq:u11A}, and establishes  their existence via the convergence of the finite volume scheme.
		
		%--------------- section
		\section{Existence of the entropy solution}\label{sec:existence}
		For $\Delta x,\Delta t>0$ and $\lambda=\Delta t/\Delta x,$ consider equidistant spatial grid points $x_i:=i\Delta x$ for $i\in\Z$ and temporal grid points $t^n:=n\Delta t$ 
		for integers $n\in\{0,\ldots,N_T\}$, such that $T=N_T \D t.$  Let $\mathbbm{1}_i(x)$ denote the indicator function of $C_i:=[x_{i-1/2}, x_{i+1/2})$, where $x_{i+1/2}=\frac12(x_i+x_{i+1})$ and let
		$\chi^n(t)$ denote the indicator function of $C^{n}:=[t^n,t^{n+1})$, and let $C_i^n:=C^n\times C_i$. For each $k \in \{1, \ldots, N\},$ we approximate the initial data and $\sigma^k$ according to:
		\begin{align}\label{initial}
			U^{k}_{\Delta}(0,x)&:=\sum_{i\in\Z}\mathbbm{1}_i(x)U^{k,0}_i, \quad x\in \R, \text{ where }  U^{k,0}_i=\int_{C_i}U^{k}_0(x) \d x,\quad i\in\Z,\\
			\nonumber \sigma^{k}_{\Delta}(x)&:=\sum_{i\in\Z}\mathbbm{1}_i(x)\sigma^{k}_i, \quad x\in \R, \text{ where }  \sigma^{k}_i=\sigma^{k}(x_i),\quad i\in\Z.
		\end{align}
		Further, we define a piecewise constant approximate solution $\boldsymbol{U}_{\Delta} =
		\left(U_{\Delta}^{1}, \ldots, U_{\Delta}^{N}\right)$, i.e., $\boldsymbol{U}_{\Delta}\approx\boldsymbol{U}$, to the IVP~\eqref{eq:uA}--\eqref{eq:u11A} 
		by
		\begin{displaymath}
			U_{\Delta}^{k} (t,x) =  U^{k,n}_{i}
			\,\, \text{ for 
				$(t,x)  \in C_i^{n}$, where 
				$n  \in  \{0, \ldots, N_T\}, \, i  \in  \Z,
				\, k  \in  \{1, \ldots, N\}$.}
		\end{displaymath}
		
		The approximate solution $\boldsymbol{U}_{\Delta}$ is updated  through the following marching formula:
		\begin{align}
			U^{k,n+1}_i
			&:= U^{k,n}_i- \lambda \bigl[
			F^k (\nu^k({\boldsymbol{c}}^{k,n}_{i+1/2}),\overline{U}_i^{k,n},\overline{U}_{i+1}^{k,n})
			- 
			F^k (\nu^k({\boldsymbol{c}}^{k,n}_{i-1/2}),\overline{U}_{i-1}^{k,n},\overline{U}_{i}^{k,n})
			\bigr]\nonumber\\
			& := 
			U^{k,n}_i- \lambda \bigl[
			F^{k,n}_{i+1/2} (\overline{U}_i^{k,n},\overline{U}_{i+1}^{k,n})
			- 
			F^{k,n}_{i-1/2} (\overline{U}_{i-1}^{k,n}, \overline{U}_i^{k,n})
			\bigr]\label{scheme}\\
			&=H^k(\nu^k({\boldsymbol{c}}^{k,n}_{i-1/2}),\nu^k({\boldsymbol{c}}^{k,n}_{i+1/2}),\overline{U}_{i-1}^{k,n},\overline{U}_i^{k,n},\overline{U}_{i+1}^{k,n})\nonumber,
		\end{align}
		where  \\
		(1) for each $n  \in  \{0, \ldots, N_T\}, 
		i  \in  \Z$ and $
		k  \in  \{1, \ldots, N\}$, $\overline{U}_i^{k,n}:= \sigma^k_iU_i^{k,n}\approx \sigma^k(x_i)U^k(x_i,t^n)$ (pointwise value of the right continuous representative) and which is extended to the function defined in ${Q}_T$, via 
		\begin{equation*}
			\overline{U}^{k}_{\Delta}(t,x) =\sum_{n=0}^{N_T-1} \sum_{i\in\Z} \mathbbm{1}_i(x) \chi^n(t) \overline{U}_i^{k,n};
		\end{equation*}
		(2) the convolution term is updated through the following formula:
		\begin{align}\label{eq:conv1}
			{\boldsymbol{c}}^{k,n}_{ i+1/2}&=(c_{ i+1/2}^{1,k,n},\ldots, c_{ i+1/2}^{N,k,n}),
		\end{align}with, for each $j\in\{1,\ldots,N\}$, we have
		$c_{ i+1/2}^{j,k,n}$ being computed through the following quadrature formula,
		\begin{align*} 
			% \label{eq:conv1}
			c_{ i+1/2}^{j,k,n}&=\Delta x\sum\limits_{p\in \Z} \mu^{j,k}_{i+1/2-p}U^{j,n}_{p+1/2} \approx c^{j,k}(t^n,x_{i+1/2})=
			\int_{\R} \mu^{j,k}(x_{i+1/2}-y)U^{j}_{\D}(t^n,y)\d y, 
		\end{align*}
		with $U^{k,n}_{p+1/2}$ being any convex combination of
		$U^{k,n}_{p}$ and $U^{k,n}_{p+1}$, and $\mu^{j,k}_{i+1/2} = \mu^{j,k} (x_{i+1/2}).$  \\
		(3) $F^k$ is chosen so as to make $H^k$ increasing in its last three arguments. 
		
		The convolution terms satisfy the following estimates:
		\begin{lemma}
			\label{lem:AB} For for each $n  \in  \{0, \ldots, N_T\},~
			i  \in  \Z$ and $
			k  \in  \{1, \ldots, N\}$, the convolution term \eqref{eq:conv1} satisfies
			\begin{align*}
				\norma{{\boldsymbol{c}}^{k,n}_{i+1/2} - {\boldsymbol{c}}^{k,n}_{i-1/2}}
				& \leq 
				\mathcal{K}_1\Delta x, \,\\
				\norma{{\boldsymbol{c}}^{k,n}_{i+3/2} -2{\boldsymbol{c}}^{k,n}_{i+1/2}+ {\boldsymbol{c}}^{k,n}_{i-1/2}}
				& \leq 
				\mathcal{K}_2\, \Delta x^{2} \, \,,
			\end{align*}
			where \begin{align*}
				&\mathcal{K}_1=\norma{\boldsymbol{\mu'}}_{(L^{\infty}(\R))^{N^2}} \norma{\boldsymbol{U}_{\Delta}(t^n)}_{(L^1(\R))^N}, \\       &\mathcal{K}_2=2\norma{\boldsymbol{\mu''}}_{(L^{\infty}(\R))^{N^2}}
				\norma{\boldsymbol{U}_{\Delta}(t^n)}_{(L^1(\R))^N}.
			\end{align*}
		\end{lemma}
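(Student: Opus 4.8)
The plan is to bound both quantities componentwise and then reassemble them with the product norms of Section~\ref{def}. Fix $j,k,n$ and $i$. First I would note the elementary but decisive fact that a difference of two discrete convolutions is again a discrete convolution, now of the grid function $(U^{j,n}_{p+1/2})_{p\in\Z}$ against a \emph{finite difference of the kernel}. Indeed, from the quadrature defining $c^{j,k,n}_{i+1/2}$,
\[
c^{j,k,n}_{i+1/2}-c^{j,k,n}_{i-1/2}=\Delta x\sum_{p\in\Z}\bigl(\mu^{j,k}_{i+1/2-p}-\mu^{j,k}_{i-1/2-p}\bigr)U^{j,n}_{p+1/2},
\]
and likewise the $j$-th entry of $\boldsymbol{c}^{k,n}_{i+3/2}-2\boldsymbol{c}^{k,n}_{i+1/2}+\boldsymbol{c}^{k,n}_{i-1/2}$ equals $\Delta x\sum_{p\in\Z}\bigl(\mu^{j,k}_{i+3/2-p}-2\mu^{j,k}_{i+1/2-p}+\mu^{j,k}_{i-1/2-p}\bigr)U^{j,n}_{p+1/2}$, i.e.\ a discrete convolution against the centered second difference of $\mu^{j,k}$. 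Everything thus reduces to (i) controlling $\Delta x\sum_{p\in\Z}\abs{U^{j,n}_{p+1/2}}$, and (ii) estimating the first and the centered second finite differences of $\mu^{j,k}$ at spacing $\Delta x$.

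For (i), since $U^{j,n}_{p+1/2}$ is a convex combination of $U^{j,n}_p$ and $U^{j,n}_{p+1}$, the triangle inequality together with the shift-invariance of $\sum_{p\in\Z}$ gives $\Delta x\sum_{p\in\Z}\abs{U^{j,n}_{p+1/2}}\le\Delta x\sum_{p\in\Z}\abs{U^{j,n}_p}=\norma{U^j_\Delta(t^n)}_{L^1(\R)}$. For (ii), the sampling points $x_{i+1/2-p}$ and $x_{i-1/2-p}$ lie at distance $\Delta x$, so \ref{H2A} ($\mu^{j,k}\in C^1$) and the mean value theorem give $\abs{\mu^{j,k}_{i+1/2-p}-\mu^{j,k}_{i-1/2-p}}\le\norma{(\mu^{j,k})'}_{L^\infty(\R)}\Delta x$; for the second difference I would apply the mean value theorem once to each of the two first differences it is built from, writing $\mu^{j,k}_{i+3/2-p}-2\mu^{j,k}_{i+1/2-p}+\mu^{j,k}_{i-1/2-p}=\bigl((\mu^{j,k})'(\xi_1)-(\mu^{j,k})'(\xi_2)\bigr)\Delta x$ with $\xi_1,\xi_2$ in adjacent intervals of length $\Delta x$, so that $\abs{\xi_1-\xi_2}\le2\Delta x$ and, since $\mu^{j,k}\in C^2$ by \ref{H2A}, this is at most $2\norma{(\mu^{j,k})''}_{L^\infty(\R)}\Delta x^2$.

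Inserting (i) and (ii) into the two convolution identities, pulling the pointwise kernel bounds out of the sums, and summing over $j=1,\ldots,N$ while estimating each $\norma{(\mu^{j,k})'}_{L^\infty(\R)}$ (resp.\ $\norma{(\mu^{j,k})''}_{L^\infty(\R)}$) by $\norma{\boldsymbol{\mu'}}_{(L^\infty(\R))^{N^2}}$ (resp.\ $\norma{\boldsymbol{\mu''}}_{(L^\infty(\R))^{N^2}}$), produces exactly $\norma{\boldsymbol{c}^{k,n}_{i+1/2}-\boldsymbol{c}^{k,n}_{i-1/2}}\le\mathcal{K}_1\Delta x$ and $\norma{\boldsymbol{c}^{k,n}_{i+3/2}-2\boldsymbol{c}^{k,n}_{i+1/2}+\boldsymbol{c}^{k,n}_{i-1/2}}\le\mathcal{K}_2\Delta x^2$, with the factor $2$ in $\mathcal{K}_2$ traced precisely to the step $\abs{\xi_1-\xi_2}\le2\Delta x$. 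I do not expect any genuine obstacle here: the result is simply the discrete counterpart of $\norma{\mu'*U}_{L^\infty(\R)}\le\norma{\mu'}_{L^\infty(\R)}\norma{U}_{L^1(\R)}$ and its second-order analogue. The only mildly delicate points are the half-integer index bookkeeping in the kernel samples $\mu^{j,k}_m=\mu^{j,k}(x_m)$ and the absolute convergence of the $p$-series, both of which are routine given $\boldsymbol{U}_\Delta(t^n)\in(L^1(\R))^N$ and $\boldsymbol{\mu}\in L^\infty(\R;\R^{N^2})$.
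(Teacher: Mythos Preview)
Your argument is correct and is precisely the componentwise mean-value-theorem estimate that the paper defers to \cite[Prop.~2.8]{ACT2015} and \cite[Lemma~A.2]{ACG2015}; in particular your identification of the factor $2$ in $\mathcal{K}_2$ via $|\xi_1-\xi_2|\le 2\Delta x$ matches the standard derivation. The only minor caveat is that your step $\Delta x\sum_p|U^{j,n}_{p+1/2}|\le\Delta x\sum_p|U^{j,n}_p|$ tacitly uses that the convex-combination weight is the same for every $p$ (or that $U^{j,n}\ge0$, cf.~\eqref{apx:positivity}), which is the case here.
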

		\begin{proof}
			The proof follows by repeating the arguments of \cite[Prop.~2.8]{ACT2015} and \cite[Lemma~A.2]{ACG2015} in every component of the vector $\boldsymbol{c}_{i+1/2}^{k,n}.$
		\end{proof} 
		For each $
		k  \in  \{1, \ldots, N\}$, we modify the schemes proposed in \cite{ACG2015, AV2023} and  define the numerical fluxes that are the nonlocal extensions of the well-known  monotone fluxes for local conservation laws as described below: \\
		(1) \textbf{Lax--Friedrichs type flux:} 
		\begin{align*}
			F^{k}_{LF}(a,b,c)
			= 
			\frac{a}{2}\left( f^k(b)
			+
			f^k(c)\right)
			-
			\frac{\theta}{2\, \lambda}(c-b)\,,\quad \quad \theta \in \left(0,\frac{2}{3\norma{\sigma^k}_{L^\infty(\R)}} \right).
		\end{align*} 
		Further, $\Delta t$ is chosen in order to satisfy
		the following CFL condition:
		\begin{equation}\label{CFL_LF}
			\lambda \le \min_{k} 
			\left\{\frac{\min(1, 4-6\theta\norma{\sigma^k}_{L^\infty(\R)},6\theta\norma{\sigma^k}_{L^\infty(\R)})}{1+6\norma{\sigma^k}_{L^\infty(\R)}\abs{f^k}_{\lipR}\norma{\nu^k}_{L^\infty(\R)}}\right\}.
		\end{equation}
		(2) \textbf{Godunov type flux:}
		\begin{align*}
			F^{k}_{\rm Godunov}(a,b,c)= a F^k_{\rm Godunov}(b,c),
		\end{align*}
		where $ F^k_{\rm Godunov}$ is the Godunov flux for the corresponding local conservation law with discontinuous flux $f^k(x,u)=f^k(\sigma^k(x)u).$ Further, $\Delta t$ is chosen in order to satisfy
		the following CFL condition:
		\begin{equation}\label{CFL_God}
			\lambda \max_{k}\left\{{\norma{\sigma^k}_{L^\infty(\R)}}\abs{f^k}_{\lipR}\norma{\nu^k}_{L^\infty(\R)}\right\} \leq \frac{1}{6}.
		\end{equation}
		The scheme \eqref{scheme} can now be rewritten in an incremental form as (see \cite{AV2023, GTV2022a})
		\begin{align}
			\begin{split}
				\overline{U}_i^{k,n+1}
				&= \overline{U}_i^{k,n} + \overline{U}_i^{k,n+1}-\overline{U}_i^{k,n} \\
				&=  \overline{U}_i^{k,n} -\lambda  \sigma_i^k \left[F^{k,n}_{i+1/2}(\overline{U}_i^{k,n},\overline{U}_{i+1}^{k,n})-F^{k,n}_{i-1/2}(\overline{U}_{i-1}^{k,n}, \overline{U}_i^{k,n})\right]\\
				&= 
				\overline{U}_{{i}}^{k,n}
				-
				a_{i-1/2}^{k,n} \,  \Delta_-\overline{U}_{{i}}^{k,n}
				+
				b_{i+1/2}^{k,n} \,  \Delta_+\overline{U}_{{i}}^{k,n}\\
				&   \quad  - \lambda  \sigma_i^k\left(
				F^{k,n}_{i+1/2} (\overline{U}_{{i}}^{k,n},\overline{U}_{{i}}^{k,n})
				-
				F^{k,n}_{i-1/2} (\overline{U}_{{i}}^{k,n},\overline{U}_{{i}}^{k,n})
				\right), \label{incremental_form}
			\end{split}
		\end{align}   
		where  $\Delta_+z_{i}^{k,n}=z_{i+1}^{k,n}-z_{i}^{k,n}=\Delta_-z^{k,n}_{i+1},$ for any vector $z$, and
		\begin{align*}
			a_{i+1/2}^{k,n}
			& = 
			\lambda  \sigma_i^k \,
			\frac{
				{\color{black}F^{k,n}_{i+1/2} (\overline{U}^{k,n}_{i+1},\overline{U}^{k,n}_{i+1})}
				-
				F^{k,n}_{i+1/2}(\overline{U}^{k,n}_{i},\overline{U}^{k,n}_{i+1})}{\Delta_+\overline{U}_{{i}}^{k,n}} \,,\\
			b_{i+1/2}^{k,n}
			&= 
			\lambda  \sigma_i^k \,
			\frac{
				{\color{black}F^{k,n}_{i+1/2} (\overline{U}_{{i}}^{k,n},\overline{U}_{{i}}^{k,n})}
				-
				F^{k,n}_{i+1/2}(\overline{U}_{{i}}^{k,n},\overline{U}^{k,n}_{i+1})}{\Delta_+\overline{U}_{{i}}^{k,n}} \,.
		\end{align*} 
		The following theorem establishes the existence and the regularity of the entropy solutions of IVP~\eqref{eq:umulA}:
		\begin{theorem}\label{Existence}[Stability of the finite volume approximations]
			Assume that \ref{H1A}--\ref{H3A} hold. For $0<t\leq T$, fix a non-negative initial data $\boldsymbol{U}_0 \in ((\L1\cap
			\bv) (\R))^N$. Then, there exist constants $\mathcal{K}_i,~i=3,\ldots,6,$ independent of the mesh size $\D x$ such that the sequence of approximations $\boldsymbol{U}_{\Delta}$ defined by \eqref{scheme} under an appropriate CFL condition  (cf.~\eqref{CFL_LF} and \eqref{CFL_God} for the Lax--Friedrichs and Godunov type scheme, respectively), satisfy:  \\
			(1) Positivity:
			\begin{align}\label{apx:positivity}
				\boldsymbol{U}_{\Delta} (t,x) &\geq 0 \,.
			\end{align}
			(2) $L^{1}$ estimate:
			\begin{align}\label{apx:L1}
				\norma{ \boldsymbol{U}_{\Delta} (t)}_{(\L1(\R))^N}& = \norma{ \boldsymbol{U}_0}_{(\L1(\R))^N} \,.
			\end{align}
			(3) $L^{\infty}$ estimate:
			\begin{align}\label{apx:Linf}
				\norma{\boldsymbol{\overline{U}}_{\Delta}(t)}_{(\L\infty(\R))^N}
				& \leq
				\exp(\mathcal{K}_3\, t ) \,
				\norma{\boldsymbol{\overline{U}}_{\D}(0)}_{(\L\infty(\R))^N} \, .
			\end{align}
			Furthermore,
			\begin{align*}
				\norma{\boldsymbol{U}_{\Delta}(t)}_{(\L\infty(\R))^N}
				& \leq \frac{\norma{\boldsymbol{\sigma}}_{(L^\infty(\R))^N}}{\inf\limits_{x\in\R}\sigma^k(x)}
				\exp(\mathcal{K}_3\, t ) \,
				\norma{\boldsymbol{U}_0}_{(\L\infty(\R))^N} \, .
			\end{align*}
			(4) BV estimate: For each $n  \in  \{0, \ldots, N_T\}$ and $
			k  \in  \{1, \ldots, N\}$,
			\begin{align}\label{apx:bv}
				\sum_{i\in\mathbb{Z}}
				\modulo{\Delta_+\overline{U}_{{i}}^{k,n}} 
				&\leq 
				\exp(\mathcal{K}_4t)\left(\sum_{i\in\Z}\abs{\Delta_+\overline{U}_i^{k,0}}  + \mathcal{K}_5t\right).
			\end{align}
			(5) \label{lem:time} Time estimate:  For each $n  \in  \{0, \ldots, N_T-1\}$ and $
			k  \in  \{1, \ldots, N\}$,
			\begin{align}\label{apx:time}
				\Delta x\sum\limits_{i\in \mathbb{Z}} \abs{U_i^{k,n+1}-U_i^{k,n}} &\leq \mathcal{K}_6 \Delta t.
			\end{align}
			(6) \label{dis_ent}Discrete entropy inequality:
			For each $n  \in  \{0, \ldots, N_T-1\}, ~i\in\Z$, and $
			k  \in  \{1, \ldots, N\}$, and
			for all $\alpha \in \R, $ 
			\begin{align*} \modulo{U_i^{k,n+1}-  \frac{\alpha}{\sigma^k_i}}&-\modulo{U_i^{k,n}-   \frac{\alpha}{\sigma^k_i}}+\lambda\big(G^{k,n}_{i+1/2}(U_i^{k,n} ,U_{i+1}^{k,n},\alpha)-G^{k,n}_{i-1/2}(U_{i-1}^{k,n} ,U_i^{k,n},\alpha)\big)\\
				& \quad+\lambda\sgn\left(U_i^{k,n+1}-   \frac{\alpha}{\sigma^k_i}\right) f^k(\alpha)(\nu^k(\boldsymbol{c}_{i+1/2}^{k,n})-\nu^k(\boldsymbol{c}_{i-1/2}^{k,n}))\le 0,
			\end{align*} 
			where 
			\begin{align*}
				G^{k,n}_{i+1/2} (p,q,\alpha)
				&:= F^{k,n}_{i+1/2}\left( \sigma^k_i\left(p\vee   \frac{\alpha}{\sigma^k_i}\right), \sigma^k_{i+1}\left(q\vee  \frac{\alpha}{\sigma^k_{i+1}}\right)\right)
				\\&\quad-
				F^{k,n}_{i+1/2}\left( \sigma^k_i\left(p\wedge  \frac{\alpha}{\sigma^k_i}\right), \sigma^k_{i+1}\left(q\wedge  \frac{\alpha}{\sigma^k_{i+1}}\right)\right ) \,,
			\end{align*}
			with $u\vee v=\max(u,v), u\wedge v=\min(u,v).$ \\
			(7) \label{sub_conc} Up to a subsequence, the numerical approximations $\boldsymbol{U}_{\D}$  converge to an entropy solution $ \boldsymbol{U} \in (C([0,T];L^1(\R)) \cap L^{\infty}([0,T];\bv(\R)))^{N}.$
		\end{theorem}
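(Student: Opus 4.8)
The plan is to prove items (1)--(6) by induction on the time level $n$, working throughout with the conservative form \eqref{scheme} and its incremental reformulation \eqref{incremental_form}, and then to obtain (7) by a compactness argument followed by a Lax--Wendroff type passage to the limit in the discrete entropy inequality of item (6). Items (1) and (2) come first and are quick. Since $f^k(0)=0$ and $\nu^k(\boldsymbol 0)=\boldsymbol 0$, every numerical flux vanishes at the constant state $\boldsymbol U\equiv\boldsymbol 0$, so this state is preserved by \eqref{scheme}; because $H^k$ is nondecreasing in its last three arguments and the relevant CFL condition (\eqref{CFL_LF} or \eqref{CFL_God}) makes the increment coefficients $a^{k,n}_{i\pm1/2},\,b^{k,n}_{i\pm1/2}$ and $1-a^{k,n}_{i-1/2}-b^{k,n}_{i+1/2}$ nonnegative, monotonicity of the update gives \eqref{apx:positivity} inductively. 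Summing \eqref{scheme} over $i\in\Z$ (the flux terms telescope, using decay of $\boldsymbol U_\Delta$ at infinity) and invoking positivity yields the conservation identity \eqref{apx:L1}.

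For the $L^\infty$ bound I would rewrite \eqref{incremental_form} as $\overline U_i^{k,n+1}=a^{k,n}_{i-1/2}\overline U_{i-1}^{k,n}+\bigl(1-a^{k,n}_{i-1/2}-b^{k,n}_{i+1/2}\bigr)\overline U_i^{k,n}+b^{k,n}_{i+1/2}\overline U_{i+1}^{k,n}-\lambda\sigma_i^k f^k(\overline U_i^{k,n})\bigl(\nu^k(\boldsymbol{c}^{k,n}_{i+1/2})-\nu^k(\boldsymbol{c}^{k,n}_{i-1/2})\bigr)$: under the CFL condition the first three terms form a convex combination, and by Lemma~\ref{lem:AB} together with \ref{H1A}--\ref{H2A} the last term is bounded by $\mathcal{K}_3\,\Delta t\,\abs{\overline U_i^{k,n}}$, so a discrete Gr\"onwall inequality gives \eqref{apx:Linf}; the bound on $\norma{\boldsymbol U_\Delta(t)}_{(L^\infty(\R))^N}$ then follows from $\inf_x\sigma^k(x)>0$ and $\norma{\overline{\boldsymbol U}_\Delta(0)}\le\norma{\boldsymbol\sigma}\norma{\boldsymbol U_0}$. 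The time-continuity estimate \eqref{apx:time} is obtained by writing $U_i^{k,n+1}-U_i^{k,n}=-\lambda\bigl(F^{k,n}_{i+1/2}-F^{k,n}_{i-1/2}\bigr)$, bounding each flux difference by the Lipschitz constants of $f^k$ and $\nu^k$ and, via Lemma~\ref{lem:AB}, the $\Oh(\Delta x)$ variation of the convolution in $i$, and summing in $i$ using \eqref{apx:L1} and the $\bv$ bound \eqref{apx:bv}.

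The $\bv$ estimate \eqref{apx:bv} is the heart of the argument and the step I expect to be the main obstacle. Starting from the convex-combination form of \eqref{incremental_form}, I would take the spatial difference $\Delta_+$ of the update, obtaining a recursion of the form $\Delta_+\overline U_i^{k,n+1}=\bigl(\text{a nonnegative convex combination of }\Delta_+\overline U_{i-1}^{k,n},\ \Delta_+\overline U_i^{k,n},\ \Delta_+\overline U_{i+1}^{k,n}\bigr)+(\mathrm{error})_i$, where $(\mathrm{error})_i$ collects the $i$-variation of the increment coefficients $a^{k,n}_{\bullet},b^{k,n}_{\bullet}$ — controlled by $\abs{\Delta_+\sigma_i^k}$ and, through Lemma~\ref{lem:AB}, by $\mathcal{K}_1\Delta x$ and $\mathcal{K}_2\Delta x^2$ — together with the spatial difference of the nonlocal source term identified above. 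Taking absolute values (the convex-combination coefficients being nonnegative by the CFL condition) and summing in $i$, the error contributes at most $C\Delta t\sum_i\abs{\Delta_+\overline U_i^{k,n}}+C\Delta t$, where the additive term comes from $\TV(\sigma^k)$, $\norma{U_0^k}_{L^1(\R)}$ and the $W^{2,\infty}$ bounds in \ref{H2A}; iterating this discrete Gr\"onwall inequality yields \eqref{apx:bv}. The delicate point is precisely this bookkeeping: because $\sigma^k$ may have infinitely many discontinuities with accumulation points, one must verify that the variation produced by the flux discontinuities enters only \emph{additively}, and that the nonlocal corrections, being $\Oh(\Delta x)$ per cell and per time step, accumulate to an $\Oh(t)$ — not $\Oh(t/\Delta x)$ — contribution.

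Item (6) follows from monotonicity and consistency by a Crandall--Majda / Towers type comparison: applying the monotone map $H^k$ to the states $\sigma_j^k\bigl(U_j^{k,n}\vee\tfrac{\alpha}{\sigma_j^k}\bigr)$ and to $\sigma_j^k\bigl(U_j^{k,n}\wedge\tfrac{\alpha}{\sigma_j^k}\bigr)$, and comparing the outcomes both with $U_i^{k,n+1}$ and with the value of the scheme on the constant steady state $\overline{U}^k\equiv\alpha$ — which, because of the nonlocal coupling, is \emph{not} exactly preserved but is shifted by exactly $-\lambda f^k(\alpha)\bigl(\nu^k(\boldsymbol{c}^{k,n}_{i+1/2})-\nu^k(\boldsymbol{c}^{k,n}_{i-1/2})\bigr)$ — and then using the elementary inequality $\abs{a}+\sgn(a)\,b\le\abs{a+b}$, one arrives at the stated discrete adapted entropy inequality with numerical entropy flux $G^{k,n}_{i\pm1/2}$. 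Finally, for (7): the uniform bounds (1)--(5) give, via Helly's theorem in $x$ and the time-continuity estimate in $t$, a subsequence along which $\overline{\boldsymbol U}_\Delta\to\overline{\boldsymbol U}$ in $C([0,T];L^1_{loc}(\R))^N$; since $1/\sigma^k\in(\bv\cap L^\infty)(\R)$, this gives $\boldsymbol U_\Delta\to\boldsymbol U$, with $U^k:=\overline{U}^k/\sigma^k$, in $C([0,T];L^1_{loc}(\R))^N$ and $\boldsymbol U\in(C([0,T];L^1(\R))\cap L^\infty([0,T];\bv(\R)))^N$; moreover, positivity together with conservation of mass upgrades this to convergence in $C([0,T];L^1(\R))^N$, whence $\boldsymbol{c}^{k,n}_{i+1/2}\to(\boldsymbol\mu*\boldsymbol U)^k$ by Lemma~\ref{lem:AB}. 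Multiplying the discrete entropy inequality of (6) by a nonnegative test function $\phi$, summing by parts in $n$ and $i$, and letting $\Delta\to0$ — using consistency of $F^k$ and $G^{k,n}_{i\pm1/2}$ with the flux in \eqref{eq:uA} and the convergences just established — recovers \eqref{kruz2}, completing the proof.
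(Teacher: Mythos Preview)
Your proposal is correct and follows the same overall strategy as the paper: incremental form plus Harten's lemma, source-term bookkeeping via Lemma~\ref{lem:AB}, discrete Gr\"onwall, Crandall--Majda comparison for the entropy inequality, and a Lax--Wendroff passage to the limit. Two remarks on the $\bv$ step, which you rightly flag as the crux. First, once the update is written in incremental form, Harten's argument absorbs the $i$-dependence of $a^{k,n}_{i\pm1/2},b^{k,n}_{i\pm1/2}$ directly; there is \emph{no} separate ``coefficient-variation'' error, and the only remainder is the spatial difference of the source $-\lambda\sigma_i^k\bigl(F^{k,n}_{i+1/2}(\overline U_i,\overline U_i)-F^{k,n}_{i-1/2}(\overline U_i,\overline U_i)\bigr)=-\lambda\sigma_i^k f^k(\overline U_i^{k,n})\bigl(\nu^k(\boldsymbol c^{k,n}_{i+1/2})-\nu^k(\boldsymbol c^{k,n}_{i-1/2})\bigr)$. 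Second, when you sum this remainder in $i$ you are led to the second difference $\nu^k(\boldsymbol c^{k,n}_{i+3/2})-2\nu^k(\boldsymbol c^{k,n}_{i+1/2})+\nu^k(\boldsymbol c^{k,n}_{i-1/2})$; since $\nu^k$ acts on the \emph{vector} $\boldsymbol c^{k,n}$, Lemma~\ref{lem:AB} alone is not enough, and the paper uses the Lipschitz continuity of $\nabla\nu^k$ (the $W^{2,\infty}$ bound on $\nu^k$ in~\ref{H2A}) together with both estimates of Lemma~\ref{lem:AB} to get the required $\mathcal O(\Delta x^2)$ per cell, which is what makes the contribution accumulate to $\mathcal O(t)$ rather than $\mathcal O(t/\Delta x)$. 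The $\TV(\sigma^k)$ contribution enters additively exactly as you anticipate, through the term $\abs{\sigma^k_{i+1}f^k(\overline U^{k,n}_{i+1})-\sigma^k_i f^k(\overline U^{k,n}_i)}$.
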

		\begin{proof} The proof of the above theorem follows by invoking the incremental form of the scheme and can be done in the spirit of \cite{ACG2015,ACT2015,AV2023} with appropriate modifications.  We skip the detailed proof and present only the key estimates below. Let $n  \in  \{0, \ldots, N_T\},~i\in \Z$ and $
			k  \in  \{1, \ldots, N\}$.  \\
			(1)  Using the CFL condition, cf.~\eqref{CFL_LF} and \eqref{CFL_God}, it is straightforward to see that the incremental coefficients satisfy,
			$0\le a^{k,n}_{i+1/2}, b^{k,n}_{i+1/2}
			\leq
			\frac{1}{3}.$
			Moreover, using $U^{k,n} \ge 0$, $\overline{U}^{k,n}\ge 0$ and Lemma~\ref{lem:AB}, the CFL condition 
			(cf.~\eqref{CFL_LF} and \eqref{CFL_God}) can again be invoked to show:
			\begin{align*}
				\sigma_{i}^{k} \modulo{ F^{k,n}_{i+1/2} (\overline{U}_{{i}}^{k,n},\overline{U}_{{i}}^{k,n})
					-
					F^{k,n}_{i-1/2} (\overline{U}_{{i}}^{k,n},\overline{U}_{{i}}^{k,n})}
				&=
				\sigma_{i}^{k} \overline{U}_i^{k,n}\left|
				\nu^k(c_{i+1/2}^{k,n})
				-
				\nu^k(c_{i-1/2}^{k,n})
				\right| \\
				&\leq 
				2  \sigma_{i}^{k}\overline{U}_i^{k,n}\norma{\nu^k}_{L^\infty(\R)}\\
				&\leq
				\frac{1}{3\lambda}\overline{U}_i^{k,n}.  
			\end{align*}
			Finally,
			\begin{align*}
				\overline{U}_{i}^{k,n+1}
				&= 
				(1-a^{k,n}_{i-1/2} -b^{k,n}_{i+1/2}) \overline{U}_{{i}}^{k,n}
				+
				a^{k,n}_{i-1/2} \overline{U}_{i-1}^{k,n}
				+
				b^{k,n}_{i+1/2} \overline{U}_{i+1}^{k,n}
				\\
				\nonumber
				& \quad 
				-
				\lambda     \sigma_{i}^{k}\left(
				F^{k,n}_{i+1/2} (\overline{U}_{{i}}^{k,n},\overline{U}_{{i}}^{k,n})
				-
				F^{k,n}_{i-1/2} (\overline{U}_{{i}}^{k,n},\overline{U}_{{i}}^{k,n})
				\right)
				\\
				&\geq 
				\left(\frac{2}{3}-a^{k,n}_{i-1/2} - b^{k,n}_{i+1/2}   \right) \overline{U}_{{i}}^{k,n}
				+
				a^{k,n}_{i-1/2} \overline{U}_{i-1}^{k,n}
				+
				b^{k,n}_{i+1/2} \overline{U}_{i+1}^{k,n}
				\; \geq \;
				0 \,.
			\end{align*}
			Since $\sigma^k> 0,$ the result follows.  \\
			(2) Since the scheme is conservative, the result follows from the positivity of the scheme, cf.~\eqref{apx:positivity}. \\
			(3)
			Using Lemma~\ref{lem:AB}, we obtain:
			\begin{align}
				F^{k,n}_{i+1/2} (\overline{U}_i^{k,n},\overline{U}_i^{k,n})
				-
				F^{k,n}_{i-1/2} (\overline{U}_i^{k,n},\overline{U}_i^{k,n})&=
				f^k( \overline{U}_i^{k,n}) \left(\nu^k(c_{i+1/2}^{k,n})
				-
				\nu^k(c_{i-1/2}^{k,n})\right)\nonumber
				\\ \label{eq:bound1}
				&\leq 
				\abs{f^k}_{\lipR} \overline{U}_{{i}}^{k,n}
				\abs{\nu^k}_{\lipR}      \mathcal{K}_1 \Delta x.
			\end{align}
			Now, using the incremental form of the scheme, cf.~\eqref{incremental_form},
			and the inequality \eqref{eq:bound1}, we get
			\begin{align*}
				\modulo{ \overline{U}_i^{k,n+1}}
				& \leq 
				(1-a^{k,n}_{i-1/2}-b^{k,n}_{i+1/2} ) \overline{U}_i^{k,n}
				+
				a^{k,n}_{i-1/2} \overline{U}_{i-1}^{k,n}
				\\
				& \qquad +  b^{k,n}_{i+1/2} \overline{U}_{i+1}^{k,n}
				+ \Delta t\norma{ \sigma^k}_{\L\infty(\R)}
				\abs{f^k}_{\lipR} 
				\abs{\nu^k}_{\lipR}      \mathcal{K}_1 \overline{U}_{{i}}^{k,n}
				\\
				& \leq 
				(1-a^{k,n}_{i-1/2}-b^{k,n}_{i+1/2}) \norma{ \overline{U}^k_{\Delta}(t^n)}_{\L\infty(\R)}\\
				&\quad
				+
				a^{k,n}_{i-1/2} \norma{ \overline{U}^k_{\Delta}(t^n)}_{\L\infty(\R)}
				+
				b^{k,n}_{i+1/2} \norma{\overline{U}^k_{\Delta}(t^n)}_{\L\infty(\R)}\\
				&\quad
				+ \Delta t\norma{ \sigma^k}_{\L\infty(\R)}
				\abs{f^k}_{\lipR} 
				\abs{\nu^k}_{\lipR}      \mathcal{K}_1 \norma{\overline{U}^k_{\Delta}(t^n)}_{\L\infty(\R)}
				\\
				&\leq 
				\norma{\overline{U}^k_{\Delta}(t^n)}_{\L\infty(\R)}
				\left(
				1
				+
				\Delta t\mathcal{K}_1\norma{ \sigma^k}_{\L\infty(\R)}
				\abs{f^k}_{\lipR} 
				\abs{\nu^k}_{\lipR}\right)\,,
			\end{align*}
			which implies that
			\begin{align*}
				\norma{\overline{U}^k_{\Delta}(t^{n})}_{\L\infty(\R)}
				&\leq
				\norma{\overline{U}^k_{\Delta}(0)}_{\L\infty(\R)}
				\left(1 + \Delta t\mathcal{K}_1\norma{ \sigma^k}_{\L\infty(\R)}
				\abs{f^k}_{\lipR} 
				\abs{\nu^k}_{\lipR}\right) ^{n}\\
				&\le \exp(\mathcal{K}_3 \, t^n ) \,
				\norma{\overline{U}^k_{\Delta}(0)}_{\L\infty(\R)},
			\end{align*}
			so that~\eqref{apx:Linf} holds. 
			Furthermore, since $\boldsymbol{\overline{U}}_{\Delta}$ is bounded and $ \sigma^k>0,$ the approximate function $\boldsymbol{U}_{\Delta}$ is also bounded. \\
			(4) Because of the incremental form of the scheme, we get, by taking the forward difference in the spatial direction, that 
			\begin{align*}
				\Delta_+\overline{U}_{i}^{k,n+1}
				&=
				\Delta_+\overline{U}_{{i}}^{k,n}\left(1-
				a_{i+1/2}^{k,n}-b_{i+1/2}^{k,n} \right)       +a_{i-1/2}^{k,n} \, \Delta_-\overline{U}_{{i}}^{k,n}
				+
				b_{i+3/2}^{k,n} \, \Delta_+\overline{U}_{i+1}^{k,n} \\ 
				& \qquad  -
				\lambda    \sigma_{i+1}^{k}\left(
				F^{k,n}_{i+3/2} (\overline{U}^{k,n}_{i+1},\overline{U}^{k,n}_{i+1})
				-
				F^{k,n}_{i+1/2} (\overline{U}^{k,n}_{i+1},\overline{U}^{k,n}_{i+1})
				\right) \\\nonumber 
				& \qquad +\lambda    \sigma_{i}^{k}\left(
				F^{k,n}_{i+1/2} (\overline{U}^{k,n}_{i},\overline{U}^{k,n}_{i})
				-
				F^{k,n}_{i-1/2} (\overline{U}^{k,n}_{i},\overline{U}^{k,n}_{i})
				\right)\\
				&\nonumber:= A_1+A_2+A_3.
			\end{align*}
			Since the incremental coefficients satisfy $0\le    a_{i+1/2}^{k,n},   b_{i+1/2}^{k,n}\le \frac13,$ we can repeat  Harten's argument (see \cite[Lemma~3.12]{HR2015}) to get 
			\begin{equation*}
				\sum_{i\in\Z}\modulo{A_1}\le\sum_{i\in\Z}\modulo{\Delta_+\overline{U}_{{i}}^{k,n}} .
			\end{equation*}
			Adding and subtracting $ \lambda  \sigma_{i}^{k}f^k(\overline{U}^{k,n}_{i})\left(
			\nu^k({\boldsymbol{c}}^{k,n}_{i+3/2})-
			\nu^k({\boldsymbol{c}}^{k,n}_{i+1/2})
			\right)$, we obtain
			\begin{align*}
				A_2+A_3
				&=-\lambda(
				\sigma_{i+1}^{k}f^k(\overline{U}^{k,n}_{i+1})-
				\sigma_{i}^{k}f^k(\overline{U}^{k,n}_{i}))\left(
				\nu^k({\boldsymbol{c}}^{k,n}_{i+3/2}) -
				\nu^k({\boldsymbol{c}}^{k,n}_{i+1/2})
				\right) \\
				&\quad -\lambda   \sigma_{i}^{k}f^k(\overline{U}^{k,n}_{i})\left(
				\nu^k({\boldsymbol{c}}^{k,n}_{i+3/2})-
				2\nu^k({\boldsymbol{c}}^{k,n}_{i+1/2})
				+
				\nu^k({\boldsymbol{c}}^{k,n}_{i-1/2})
				\right).
			\end{align*}
			Further,
			\begin{align*}\nonumber
				|A_2+A_3|
				&  \le  \nonumber\lambda \modulo{  \sigma_{i+1}^{k}f^k(\overline{U}^{k,n}_{i+1})-  \sigma_{i}^{k}f^k(\overline{U}^{k,n}_{i})}\abs{\nu^k}_{\lipR}\norma{{\boldsymbol{c}}^{k,n}_{i+3/2}-
					{\boldsymbol{c}}^{k,n}_{i+1/2}} \\
				& \quad +  \nonumber
				\lambda   \left|\sigma_{i}^{k}\right|\abs{f^k}_{\lipR}\left|\overline{U}^{k,n}_{i}\right|\left|
				\nu^k({\boldsymbol{c}}^{k,n}_{i+3/2})-
				2\nu^k({\boldsymbol{c}}^{k,n}_{i+1/2})
				+
				\nu^k({\boldsymbol{c}}^{k,n}_{i-1/2})
				\right|.
			\end{align*}
			Consider next
			\begin{multline*}
				\modulo{  \sigma_{i+1}^{k}f^k(\overline{U}^{k,n}_{i+1})-  \sigma_{i}^{k}f^k(\overline{U}^{k,n}_{i})} \\
				\le\abs{f^k}_{\lipR}\modulo{\Delta_+\overline{U}_{{i}}^{k,n}}\norma{ \sigma^k_{\Delta}}_{L^{\infty}(\R)}+\abs{f^k}_{\lipR}\norma{\overline{U}^k_{\Delta}}_{L^{\infty}(\R)}\modulo{\Delta_+  \sigma_{i}^{k}}.
			\end{multline*}
			Now we estimate the term
			\begin{multline*}
				\nu^k({\boldsymbol{c}}^{k,n}_{i+3/2})-
				2\nu^k({\boldsymbol{c}}^{k,n}_{i+1/2})
				+
				\nu^k({\boldsymbol{c}}^{k,n}_{i-1/2})\\
				= \nabla\nu^k({\boldsymbol{\xi}}^{k,n}_{i+1})\dott({\boldsymbol{c}}^{k,n}_{i+3/2}-{\boldsymbol{c}}^{k,n}_{i+1/2})-
				\nabla\nu^k({\boldsymbol{\xi}}^{k,n}_{i})\dott({\boldsymbol{c}}^{k,n}_{i+1/2}-{\boldsymbol{c}}^{k,n}_{i-1/2})
			\end{multline*}
			where  $\nabla$ denotes the gradient and ${\boldsymbol{\xi}}^{k,n}_{i+1}\in 
			I({\boldsymbol{c}}^{k,n}_{i+1/2},{\boldsymbol{c}}^{k,n}_{i+3/2})$ (here $I(\boldsymbol{a},\boldsymbol{b})$ denotes a rectangular box with sides parallel to the coordinate planes and $\boldsymbol{a}$ and $\boldsymbol{b}$ at opposite corners) and  ${\boldsymbol{\xi}}^{k,n}_{i}\in I({\boldsymbol{c}}^{k,n}_{i-1/2},{\boldsymbol{c}}^{k,n}_{i+1/2}).$
			Now, adding and subtracting $\nabla\nu^k({\boldsymbol{\xi}}^{k,n}_{i+1})\dott({\boldsymbol{c}}^{k,n}_{i+1/2}-{\boldsymbol{c}}^{k,n}_{i-1/2}),$ we have
			\begin{align*}
				\nu^k({\boldsymbol{c}}^{k,n}_{i+3/2})-
				& 2\nu^k({\boldsymbol{c}}^{k,n}_{i+1/2})
				+
				\nu^k({\boldsymbol{c}}^{k,n}_{i-1/2}) \\
				&=
				\nabla\nu^k({\boldsymbol{\xi}}^{k,n}_{i+1})\dott({\boldsymbol{c}}^{k,n}_{i+3/2}-2{\boldsymbol{c}}^{k,n}_{i+1/2}+{\boldsymbol{c}}^{k,n}_{i-1/2})\\
				&\quad-
				(\nabla\nu^k({\boldsymbol{\xi}}^{k,n}_{i+1})-\nabla\nu^k({\boldsymbol{\xi}}^{k,n}_{i}))\dott({\boldsymbol{c}}^{k,n}_{i+1/2}-{\boldsymbol{c}}^{k,n}_{i-1/2}).
			\end{align*}
			This implies, using Lemma \ref{lem:AB},
			\begin{align}\label{V1}
				\begin{split}
					& \left|(\nabla\nu^k({\boldsymbol{\xi}}^{k,n}_{i+1})-\nabla\nu^k({\boldsymbol{\xi}}^{k,n}_{i}))\dott({\boldsymbol{c}}^{k,n}_{i+1/2}-{\boldsymbol{c}}^{k,n}_{i-1/2})\right|\\&
					\qquad\le
					\abs{\nabla \nu^k}_{\lipR}
					\norma{\boldsymbol{\xi}^{k,n}_{i+1}-{\boldsymbol{\xi}^{k,n}_{i}}}\norma{{\boldsymbol{c}}^{k,n}_{i+1/2}-2{\boldsymbol{c}}^{k,n}_{i-1/2}}\\
					&\qquad\le \abs{\nabla \nu^k}_{\lipR} \norma{\boldsymbol{\xi}^{k,n}_{i+1}-{\boldsymbol{\xi}^{k,n}_{i}}}\mathcal{K}_1\Delta x. 
				\end{split}
			\end{align}
			Now using Lemma \ref{lem:AB},    
			\begin{align}\label{V2}
				\begin{split}
					\norma{\boldsymbol{\xi}^{k,n}_{i+1}-{\boldsymbol{\xi}^{k,n}_{i}}}&=
					\norma{({\boldsymbol{\xi}}^{k,n}_{i+1}-{\boldsymbol{c}}^{k,n}_{i+1/2})+({\boldsymbol{c}}^{k,n}_{i+1/2}-{\boldsymbol{\xi}}^{k,n}_{i})}\\
					&\le  \norma{{\boldsymbol{\xi}}^{k,n}_{i+1}-{\boldsymbol{c}}^{k,n}_{i+1/2}}+\norma{{\boldsymbol{c}}^{k,n}_{i+1/2}-{\boldsymbol{\xi}}^{k,n}_{i}}\\
					&\le \norma{{\boldsymbol{c}}^{k,n}_{i+3/2}-{\boldsymbol{c}}^{k,n}_{i+1/2}}+\norma{{\boldsymbol{c}}^{k,n}_{i+1/2}-{\boldsymbol{c}}^{k,n}_{i-1/2}}\\
					&\le  2\mathcal{K}_1\Delta x.
			\end{split}\end{align}
			Finally, using  Lemma \ref{lem:AB}, as well as \eqref{V1} and \eqref{V2}, we have
			\begin{align}\label{V}
				\begin{split}
					& \norma{\nu^k({\boldsymbol{c}}^{k,n}_{i+3/2})-
						2\nu^k({\boldsymbol{c}}^{k,n}_{i+1/2})
						+
						\nu^k({\boldsymbol{c}}^{k,n}_{i-1/2})}\\
					&\qquad\le \norma{\nabla\nu^k({\boldsymbol{\xi}}^{k,n}_{i+1})}\norma{({\boldsymbol{c}}^{k,n}_{i+3/2}-2{\boldsymbol{c}}^{k,n}_{i+1/2}+{\boldsymbol{c}}^{k,n}_{i-1/2})}\\
					&\qquad\quad+
					\norma{\nabla\nu^k({\boldsymbol{\xi}}^{k,n}_{i+1})-\nabla\nu^k({\boldsymbol{\xi}}^{k,n}_{i})}{\norma{{\boldsymbol{c}}^{k,n}_{i+1/2}-{\boldsymbol{c}}^{k,n}_{i-1/2})}}\\
					&\qquad\le  \abs{\nu^k}_{\lipR}\mathcal{K}_2\Delta x^2+ 2 \abs{D \nu^k}_{\lipR}(\mathcal{K}_1)^2(\Delta x)^2.
				\end{split} 
			\end{align}
			Consequently,
			\begin{align*}
				\sum_{i\in\Z}\left|A_2+A_3\right|
				&\le\lambda\sum_{i\in\Z} \modulo{  \sigma_{i+1}^{k}f^k(\overline{U}^{k,n}_{i+1})-  \sigma_{i}^{k}f^k(\overline{U}^{k,n}_{i})}\abs{\nu^k}_{\lipR}\mathcal{K}_1\Delta x \\&\quad+
				\lambda \Delta x^2\abs{f^k}_{\lipR}\norma{ \sigma^k_{\Delta}}_{L^{\infty}(\R)}( \abs{\nu^k}_{\lipR}\mathcal{K}_2+ 2 \abs{D \nu^k}_{\lipR}(\mathcal{K}_1)^2)\sum_{i\in\Z}\overline{U}_i^{k,n}\\
				& \le  \Delta t\abs{f^k}_{\lipR}\norma{ \sigma^k_{\Delta}}_{L^{\infty}(\R)}\abs{\nu^k}_{\lipR}\mathcal{K}_1\sum_{i}\modulo{\Delta_+\overline{U}_{{i}}^{k,n}}\\
				&\quad +\Delta t\abs{f^k}_{\lipR}\norma{\overline{U}^k_{\Delta}}_{L^{\infty}(\R)}\abs{\nu^k}_{\lipR}\mathcal{K}_1\modulo{ \sigma^k_{\Delta}}_{BV(\R)} \\
				&\quad + \Delta t\abs{f^k}_{\lipR}\norma{ \sigma^k_{\Delta}}_{L^{\infty}(\R)}( \abs{\nu^k}_{\lipR}\mathcal{K}_2+ 2 \abs{D \nu^k}_{\lipR}(\mathcal{K}_1)^2)\norma{\overline{U}^k_{\Delta}}_{L^1(\R)}\\
				&=\Delta t     \sum_{i\in\Z}|\Delta_+\overline{U}_i^{k,n}| \mathcal{K}_4
				+ \Delta t\mathcal{K}_5,
			\end{align*}
			where,
			\begin{align*}
				\mathcal{K}_4&=\abs{f^k}_{\lipR}\abs{\nu^k}_{\lipR}\norma{ \sigma^k_{\Delta}}_{\L\infty({\mathbb{R}})}\mathcal{K}_1,\\ \mathcal{K}_5 &=\mathcal{K}_1\abs{ \sigma^k_{\Delta}}_{BV(\R)}\abs{f^k}_{\lipR}\abs{\nu^k}_{\lipR}\norma{\overline{U}^k_{\Delta}}_{\L\infty({\mathbb{R}})}\\&\quad +\abs{f^k}_{\lipR}\norma{ \sigma^k_{\Delta}}_{L^{\infty}(\R)}( \abs{\nu^k}_{\lipR}\mathcal{K}_2+ 2 \abs{D \nu^k}_{\lipR}(\mathcal{K}_1)^2)\norma{\overline{U}^k_{\Delta}}_{L^1(\R)}.\end{align*}
			Finally,
			using the above estimates, we get:
			\begin{align*}
				\sum_{i\in\Z}\abs{\Delta_+ \overline{U}_i^{k,n+1}}
				\le  \sum_{i\in\Z}\abs{\Delta_+\overline{U}_i^{k,n}}   (1+\mathcal{K}_4\Delta t  ) + \Delta t\mathcal{K}_5, 
			\end{align*}
			completing the proof. \\
			(5) Owing to the total variation bound of $\boldsymbol{\overline{U}}_{\Delta}$, and following \cite{ACG2015,AV2023},
			the proof follows by repeating the arguments in every component of the vector.  \\
			(6) For $k
			\in \{1, \ldots, N\} $ fixed, the discrete entropy inequality follows by 
			repeating the arguments in \cite{ACG2015,AV2023} in every component of the vector $\boldsymbol{U}_{\Delta}$, using appropriate bounds on the matrix $\boldsymbol{c}.$  \\
			(7) Follows by repeating the Lax--Wendroff type argument (see \cite[Thm.~5.1]{BBKT2011}) equation by equation.
			%\end{enumerate}
		\end{proof}
		The following corollary is a direct consequence of the above theorem.
		\begin{corollary}[Regularity]
			For $0<t\leq T,$ the limit $\boldsymbol{U}$ of the numerical approximation $\boldsymbol{U}_{\Delta}$ which is an entropy solution of the IVP~\eqref{eq:uA}--\eqref{eq:u11A} for the system 
			of nonlocal conservation laws, satisfies the following:
			\begin{align*}
				\norma{\boldsymbol{U}(t,\dott)}_{(L^{\infty}(\R))^N}&\leq \frac{\norma{\boldsymbol{\sigma}}_{(L^\infty(\R))^N}}{\min\limits_{k\in\{1\ldots N\}}\inf\limits_{x\in\R}\sigma^k(x)}\exp(\mathcal{K}_3 T)\norma{\boldsymbol{U}_0(\dott)}_{(L^{\infty}(\R))^N},\\
				\norma{\boldsymbol{U}(t,\dott)}_{(L^{1}(\R))^N}&=\norma{\boldsymbol{U}_0}_{(L^{1}(\R))^N},\\
				\abs{\boldsymbol{\sigma}(\dott)\circ\boldsymbol{U}(t,\dott)}_{(\bv(\R))^N} &\leq \exp(\mathcal{K}_4t)(\TV( s(\dott)\boldsymbol{U}_0) +\mathcal{K}_5 t), \\
				\norma{\boldsymbol{U}(t_2,\dott)-\boldsymbol{U}(t_1,\dott)}_{(L^1(\R))^N} &\leq N\mathcal{K}_6\abs{t_2-t_1}.
			\end{align*}
		\end{corollary}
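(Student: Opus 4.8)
The plan is to derive each of the four assertions by passing to the limit $\Delta\to0$ in the corresponding mesh-uniform estimate of Theorem~\ref{Existence}, along the subsequence for which $\boldsymbol{U}_\Delta\to\boldsymbol{U}$. First I would fix the mode of convergence: by part~(7) of Theorem~\ref{Existence}, together with the bounds \eqref{apx:L1}, \eqref{apx:Linf}, \eqref{apx:bv}, \eqref{apx:time} (which give tightness in $x$ and equicontinuity in $t$), one may assume the convergence holds in $(C([0,T];L^1(\R)))^N$ and pointwise a.e.\ on $Q_T$; since $\boldsymbol{\sigma}_\Delta\to\boldsymbol{\sigma}$ boundedly a.e., the products $\boldsymbol{\overline{U}}_\Delta$ then converge to $\boldsymbol{\sigma}\circ\boldsymbol{U}$ in $(L^1_{loc}(Q_T))^N$ and a.e.\ as well.

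The $L^\infty$ and $L^1$ bounds are then immediate. For the former, fix $t\in(0,T]$, evaluate the $\boldsymbol{U}_\Delta$-version of \eqref{apx:Linf} at that time, use $\norma{\boldsymbol{U}_\Delta(0)}_{(L^\infty(\R))^N}\le\norma{\boldsymbol{U}_0}_{(L^\infty(\R))^N}$ (cell values are averages), and let $\Delta\to0$; the a.e.\ convergence and the mesh-independence of $\mathcal{K}_3$ give the bound for $\boldsymbol{U}(t,\cdot)$, after replacing the componentwise $\inf_x\sigma^k(x)$ by $\min_k\inf_x\sigma^k(x)$ so as to have one constant valid for all $k$. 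For the $L^1$ identity, \eqref{apx:L1} reads $\norma{\boldsymbol{U}_\Delta(t)}_{(L^1(\R))^N}=\norma{\boldsymbol{U}_0}_{(L^1(\R))^N}$ for every $t$, and since the convergence is strong in $C([0,T];L^1(\R))^N$ the norms converge, so the equality transfers to the limit.

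For the $\bv$ estimate I would note that the left-hand side of \eqref{apx:bv} equals $\TV\big(\overline{U}^k_\Delta(t^n,\cdot)\big)$, so by lower semicontinuity of the total variation under $L^1_{loc}$ convergence, for a.e.\ $t$,
\begin{multline*}
\TV\big(\sigma^k(\cdot)U^k(t,\cdot)\big)\le\liminf_{\Delta\to0}\TV\big(\overline{U}^k_\Delta(t,\cdot)\big)\\
\le\exp(\mathcal{K}_4 t)\Big(\limsup_{\Delta\to0}\TV\big(\overline{U}^k_\Delta(0,\cdot)\big)+\mathcal{K}_5 t\Big),
\end{multline*}
and the proof is finished once one checks $\limsup_{\Delta\to0}\TV\big(\overline{U}^k_\Delta(0,\cdot)\big)\le\TV\big(\sigma^k U_0^k\big)$, i.e.\ that the natural initial discretization (pointwise values of $\sigma^k$, cell averages of $U_0^k$) of the $\bv$ product does not increase its total variation in the limit; this uses $\sigma^k,U_0^k\in\bv(\R)$, $\inf_x\sigma^k(x)>0$, and the discrete Leibniz bound $\TV_\Delta(fg)\le\norma{f}_{L^\infty(\R)}\TV_\Delta(g)+\norma{g}_{L^\infty(\R)}\TV_\Delta(f)$ together with the convergence of $\sigma^k_\Delta$ and $U^k_\Delta(0,\cdot)$. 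For the time-continuity estimate, summing \eqref{apx:time} over the time steps between two grid points $t^m<t^n$ gives $\norma{U^k_\Delta(t^n)-U^k_\Delta(t^m)}_{L^1(\R)}\le\mathcal{K}_6(t^n-t^m)$; passing to the limit in $C([0,T];L^1)$, extending from grid times to arbitrary $t_1,t_2\in[0,T]$ by the $L^1$-in-time continuity of $U^k$ and density of the grid, and summing the resulting bound $\norma{U^k(t_2)-U^k(t_1)}_{L^1(\R)}\le\mathcal{K}_6\abs{t_2-t_1}$ over $k=1,\dots,N$ produces the factor $N$.

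The only step that is more than bookkeeping is the $\bv$ one: the lower semicontinuity on the left is automatic, but the matching control on the discretized initial datum --- that $\TV(\overline{U}^k_\Delta(0,\cdot))$ does not exceed $\TV(\sigma^k U_0^k)$ in the limit --- is where the strict positive lower bound on $\sigma^k$ and the $\bv$-regularity of both factors are genuinely used. Everything else is a direct transfer of the already-established mesh-uniform bounds of Theorem~\ref{Existence} through the strong $L^1$/a.e.\ limit.
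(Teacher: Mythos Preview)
Your proposal is correct and is precisely the argument the paper has in mind: the paper gives no explicit proof, stating only that the corollary ``is a direct consequence of the above theorem,'' and what you have written is exactly the standard passage to the limit in each of the mesh-uniform bounds \eqref{apx:L1}--\eqref{apx:time} of Theorem~\ref{Existence}. Your identification of the $\bv$ step as the only place requiring care (lower semicontinuity on the left, control of $\TV(\overline{U}^k_\Delta(0,\cdot))$ on the right) is apt and goes beyond what the paper spells out.
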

		In the next section, we prove that the entropy solutions are in fact unique.
		
		%---------------- SECTION
		\section{Kuznetsov-type estimate and the uniqueness}\label{uni}
		% ** Define all the norms**
		For $\epsilon,\epsilon_0>0,$ define $\Phi: \overline{Q}_T^2 \rightarrow \R$ by  
		\begin{align*}
			\Phi(t,x,s,y):=\Phi^{\epsilon,\epsilon_0}(t,x,s,y)=\omega_{\epsilon}(x-y)\omega_{{\epsilon}_0}(t-s),
		\end{align*}
		where $\omega_a(x)=\frac{1}{a}\omega\left(\frac{x}{a}\right),$ $a>0$ and $\omega$ is a standard symmetric mollifier with $\operatorname{supp} (\omega) \in [-1,1].$ Furthermore, we assume that $\int_{\R} \omega_a(x) \d x =1$ and $\int_{\R} \abs{\omega'_a(x)} \d x =\frac{1}{a}.$ It is straight forward to see that $\Phi$ is symmetric,
		$\Phi_x=-\Phi_y$ and $\Phi_t=-\Phi_s$. Further, \begin{align}
			\int_{Q_T} \left(\int_{C^{n}} \Phi(s,y,t,x_{i+1/2})\d t-\lambda\int_{C_i}\Phi(s,y,t^{n+1},x)\d x \right) \d s \d y&=\mathcal{O} \left(\frac{\D x^2}{\epsilon} +\frac{\D t^2}{\epsilon_0}\right),\label{est:3}\\ 
			\int_{Q_T}\left(\int_{C_{i+1}}\Phi(s,y,t^{n+1},x)\d x-\int_{C_i}\Phi(s,y,t^{n+1},x)\d x \right) \d s \d y&= \mathcal{O}\left( \frac{\D x^2}{\epsilon}\right)\label{est:4}.
		\end{align}
		Details of these estimates can be found in \cite[Sec.~3.3]{HR2015}.
		We now list some of the notation to be used in the sequel. For $\boldsymbol{U},\boldsymbol{V} \in ((L^1 \cap L^{\infty})(\R))^N$, all non-negative $\phi\in C_c^{\infty}({\overline{Q}}_T),$ for each $(t,x),(s,y)\in Q_T$ and for each 
		$k\in \{1,\dots,N\},$ we define the following:
		\begin{align*}
			\mathcal{U}^k(t,x)&:=\nu^k((\boldsymbol{\mu}*\boldsymbol{U})^k(t,x)),\quad \quad\quad \quad\quad(t,x)\in Q_T,\\
			\mathcal{V}^k(s,y)&:=\nu^k((\boldsymbol{\mu}*\boldsymbol{V})^k(s,y)),\quad \quad \quad\quad\quad(s,y)\in Q_T,\\
			G^k(a,b)&:=\sgn (a-b) (f^k(a)-f^k(b)),\quad \quad a,b\in\R,\\
			\Lambda^k_T\left(U^k,\phi, \frac{\alpha}{\sigma^k}\right)&:= \int_{Q_T}\left(\Bigg|U^k(t,x)- \frac{\alpha}{\sigma^k(x)}\Bigg|\phi_{t}+G^k\left(U^k(t,x), \frac{\alpha}{\sigma^k(x)}\right)\mathcal{U}^k(t,x)\phi_{x}\right)\d t \d x\\
			&\quad -   \int_{Q_T}\sgn\left(U^k(t,x)- \frac{\alpha}{\sigma^k(x)}\right) f^k(\alpha)\mathcal{U}^k_x(t,x)\phi \d t \d x\\
			& \quad -\int_{\R}\left|U^k(T,x)- \frac{\alpha}{\sigma^k(x)}\right|\phi(T,x)\d x+\int_{\R}\left|U_0^k(x)- \frac{\alpha}{\sigma^k(x)}\right|\phi(0,x)\d x,\\[2mm] \Lambda^k_{\epsilon,\epsilon_0}(U^k,V^k)&:=\int_{Q_T}\Lambda^k_T\left(U^k(\dott,\dott),\Phi(\dott,\dott,s,y), \frac{\overline{V}^k(s,y)}{ \sigma^{k}(\dott)}\right)\d s \d y, 
			&\intertext{where}
			\overline{V}^k(s,y)&= \sigma^k(y)V^k(s,y),\\      
			K&:=\big\{\boldsymbol{U}:\overline{Q}_T \rightarrow \R^N:\quad\norma{\boldsymbol{U}}_{(L^{\infty}(\overline{Q}_T))^N}+|\boldsymbol{U}|_{(L^{\infty}([0,T];\bv(\R)))^{N}}<\infty \\& \qquad\text{ and }\norma{U^k(t)}_{L^1(\R) }=\norma{U^k(0)}_{L^1(\R)} \text{ for }t\ge 0, 1\leq k \leq N \big\}.
		\end{align*}
		We now state and prove the Kuznetsov-type lemma for the coupled system of nonlocal conservation laws ~\eqref{eq:umulA}.
		\begin{lemma}\label{lemma:kuz}[A Kuznetsov-type lemma for nonlocal systems of conservation laws]\\
			Let $\boldsymbol{U}$ be an entropy solution of the IVP of the system ~\eqref{eq:uA}--\eqref{eq:u11A}  and let $\boldsymbol{V} \in K.$ Then, 
			\begin{align}
				\label{est:kuz}
				\begin{split}
					& \norma{\boldsymbol{U}(T,\dott)-\boldsymbol{V}(T,\dott)}_{(L^1(\R))^N} \\&\qquad \le{\mathcal{C}}\sum_{k=1}^N\big( \gamma(V^k,\epsilon_0)-\Lambda^k_{\epsilon,\epsilon_0}(V^k,U^k) \big)  \\&\qquad\quad
					+\mathcal{C}\left( \norma{\boldsymbol{U}_0-\boldsymbol{V}_0}_{(L^1(\R))^N}+N\big(\epsilon+\epsilon_0 \big)+N \frac{\epsilon}{\epsilon_0}\left|\frac{1}{ s}\right|_{(\bv(\R))^N}\right),
				\end{split}
			\end{align}
			where $\mathcal{C}$ depends only on $\boldsymbol{f},\boldsymbol{\mu},\boldsymbol{\nu},\boldsymbol{\sigma}, \boldsymbol{U}, \boldsymbol{V}$, and $T.$
		\end{lemma}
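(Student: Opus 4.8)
The plan is to carry out the classical Kuznetsov doubling-of-variables argument componentwise in $k$, with the mollifier $\Phi$ above, while tracking the two features absent from the smooth local theory: the nonlocal coefficients $\mathcal{U}^k,\mathcal{V}^k$ and the adapted-entropy constant $\alpha/\sigma^k(x)$. \emph{Step 1.} For fixed $k$ and $(s,y)\in Q_T$, insert into \eqref{kruz2} (written for $U^k$) the constant $\alpha=\overline{V}^k(s,y)=\sigma^k(y)V^k(s,y)$ and the test function $(t,x)\mapsto\Phi(t,x,s,y)\,\theta_h(t)$, where $\theta_h$ is a Lipschitz cut-off equal to $1$ on $[0,T-h]$ and vanishing at $t=T$; letting $h\downarrow 0$ and using $\boldsymbol U\in(C([0,T];L^1(\R)))^N$ produces the terminal term of $\Lambda^k_T$, and integrating over $(s,y)\in Q_T$ yields $\Lambda^k_{\epsilon,\epsilon_0}(U^k,V^k)\ge 0$. \emph{Step 2.} Set $S^k:=\Lambda^k_{\epsilon,\epsilon_0}(U^k,V^k)+\Lambda^k_{\epsilon,\epsilon_0}(V^k,U^k)$; relabelling $(t,x)\leftrightarrow(s,y)$ in the second functional and using $\Phi_x=-\Phi_y$, $\Phi_t=-\Phi_s$, the symmetry of $\Phi$, and $G^k(a,b)=G^k(b,a)$, the terms of $S^k$ fall into three groups. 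The time-derivative terms collapse to
\begin{equation*}
\int_{Q_T}\int_{Q_T}\Bigl(\tfrac1{\sigma^k(x)}-\tfrac1{\sigma^k(y)}\Bigr)\,\bigl|\overline{U}^k(t,x)-\overline{V}^k(s,y)\bigr|\,\omega_\epsilon(x-y)\,\omega'_{\epsilon_0}(t-s)\,\d t\,\d x\,\d s\,\d y,
\end{equation*}
bounded in modulus by $\mathcal C\,\tfrac\epsilon{\epsilon_0}\,\bigl|\tfrac1{\sigma^k}\bigr|_{\bv(\R)}$ (using $\int_\R|\tfrac1{\sigma^k(x)}-\tfrac1{\sigma^k(y)}|\,\omega_\epsilon(x-y)\,\d y\le\mathcal C\epsilon\,|\tfrac1{\sigma^k}|_{\bv(\R)}$, $\int_\R|\omega'_{\epsilon_0}|=1/\epsilon_0$, and the $L^\infty$ bounds of Theorem~\ref{Existence} and $\boldsymbol V\in K$). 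The explicit time-boundary terms collapse to $-\norma{U^k(T,\dott)-V^k(T,\dott)}_{L^1(\R)}+\norma{U^k_0-V^k_0}_{L^1(\R)}$ up to an error $\le\mathcal C(\epsilon+\epsilon_0+\gamma(V^k,\epsilon_0))$ — the $\epsilon$ from replacing $y$ by $x$ ($\bv$-regularity of $U^k_0$, $V^k(T,\dott)$, $\sigma^k$), the $\gamma(V^k,\epsilon_0)$ from moving the time argument of $V^k$, the $\epsilon_0$ from moving that of $U^k$ (time-Lipschitz by Theorem~\ref{Existence}).

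The remaining, flux-and-source group combines — using again $G^k(a,b)=G^k(b,a)$ — into
\begin{align*}
&\int_{Q_T}\int_{Q_T}G^k\bigl(\overline{U}^k(t,x),\overline{V}^k(s,y)\bigr)\bigl(\mathcal{U}^k(t,x)-\mathcal{V}^k(s,y)\bigr)\Phi_x\,\d t\,\d x\,\d s\,\d y\\
&\qquad+\int_{Q_T}\int_{Q_T}\sgn\bigl(\overline{U}^k(t,x)-\overline{V}^k(s,y)\bigr)\bigl(f^k(\overline{U}^k(t,x))\,\mathcal{V}^k_y(s,y)-f^k(\overline{V}^k(s,y))\,\mathcal{U}^k_x(t,x)\bigr)\Phi\,\d t\,\d x\,\d s\,\d y.
\end{align*}
\emph{Step 3.} Write $\mathcal{U}^k(t,x)-\mathcal{V}^k(s,y)=\bigl(\mathcal{U}^k(t,x)-\mathcal{U}^k(s,y)\bigr)+\bigl(\mathcal{U}^k(s,y)-\mathcal{V}^k(s,y)\bigr)$. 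For the first (mollification) increment, integrate by parts in $x$ to replace the singular factor $\partial_x\omega_\epsilon$ of mass $1/\epsilon$ by $\partial_x[G^k(\overline{U}^k(t,x),\dott)(\mathcal{U}^k(t,x)-\mathcal{U}^k(s,y))]$; since $\partial_x\overline{U}^k(t,\dott)$ is a measure of bounded mass (Theorem~\ref{Existence}), $\mathcal{U}^k_x$ is bounded and $\mathcal{U}^k=\nu^k((\boldsymbol\mu*\boldsymbol U)^k)$ is Lipschitz in $(t,x)$ (by \ref{H2A} and Theorem~\ref{Existence}), all the resulting terms are $\le\mathcal C(\epsilon+\epsilon_0+\tfrac\epsilon{\epsilon_0}\bigl|\tfrac1{\sigma^k}\bigr|_{\bv(\R)})$ after the $(s,y)$-integration; the source integral — combined with the $\mathcal{U}^k_x$-residual just produced and, symmetrically, with an integration by parts in $y$ handling the $\mathcal{V}^k_y$-term — is bounded the same way. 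For the second increment, \ref{H2A} gives $|\mathcal{U}^k(s,y)-\mathcal{V}^k(s,y)|\le\abs{\nu^k}_{\lipR}\norma{\boldsymbol\mu}_{(L^\infty(\R))^{N^2}}\norma{\boldsymbol U(s,\dott)-\boldsymbol V(s,\dott)}_{(L^1(\R))^N}$, and a further integration by parts in $x$ (again to avoid a spurious $1/\epsilon$), using the $\bv$-bound on $\overline{U}^k$, shows this contributes $\le\mathcal C\int_0^T\norma{\boldsymbol U(s,\dott)-\boldsymbol V(s,\dott)}_{(L^1(\R))^N}\,\d s$.

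\emph{Step 4.} From $\Lambda^k_{\epsilon,\epsilon_0}(U^k,V^k)\ge 0$ we get $S^k\ge\Lambda^k_{\epsilon,\epsilon_0}(V^k,U^k)$; inserting the estimates of Steps 2--3 and rearranging,
\begin{equation*}
\norma{U^k(T,\dott)-V^k(T,\dott)}_{L^1(\R)}\le -\Lambda^k_{\epsilon,\epsilon_0}(V^k,U^k)+\mathcal C\Bigl(\gamma(V^k,\epsilon_0)+\norma{U^k_0-V^k_0}_{L^1(\R)}+\epsilon+\epsilon_0+\tfrac\epsilon{\epsilon_0}\bigl|\tfrac1{\sigma^k}\bigr|_{\bv(\R)}\Bigr)+\mathcal C\int_0^T\norma{\boldsymbol U(s,\dott)-\boldsymbol V(s,\dott)}_{(L^1(\R))^N}\,\d s.
\end{equation*}
Summing over $k$, running the whole chain with $T$ replaced by an arbitrary $t\in[0,T]$, and invoking Grönwall's inequality absorbs the integral term at the cost of a factor $e^{\mathcal CT}$, folded into $\mathcal C$; this is \eqref{est:kuz}. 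The main obstacle is Step 3: the space--time increments of the nonlocal coefficient $\mathcal{U}^k$ must be controlled against the singular mollifier derivative $\Phi_x$, which works only after an integration by parts trading $\partial_x\omega_\epsilon$ for the uniform $\bv$-bound on $\overline{U}^k=\sigma^k U^k$, and one has to keep the genuinely small errors ($\epsilon$, $\epsilon_0$, $\gamma(V^k,\epsilon_0)$, and the adapted-entropy term $\epsilon\,|\tfrac1{\sigma^k}|_{\bv}/\epsilon_0$, which will force $\epsilon=\oh(\epsilon_0)$ when the lemma is applied) separate from the coupling term $\int_0^T\norma{\boldsymbol U(s)-\boldsymbol V(s)}\,\d s$ that survives until the concluding Grönwall step.
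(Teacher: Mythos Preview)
Your proof is correct and follows essentially the same route as the paper: add the two entropy functionals, use the symmetry of $\Phi$ to decompose into time-derivative, boundary, and flux--source groups, extract the $\epsilon/\epsilon_0$ adapted-entropy term from $I^k_{\Phi_t}$, integrate by parts in $x$ on the $\Phi_x$-term to trade the singular $\partial_x\omega_\epsilon$ for the $\bv$ bound on $\overline{U}^k$, split the nonlocal coefficient difference to isolate the coupling integral $\int_0^T\norma{\boldsymbol U(s)-\boldsymbol V(s)}\,\d s$, and close with Gr\"onwall. The only cosmetic differences are that the paper integrates by parts on the full $I^k_{\Phi_x}$ before splitting $\mathcal U^k-\mathcal V^k$ (you split first), and it uses the decomposition $\mathcal V^k(s,y)-\mathcal V^k(s,x)+\mathcal V^k(s,x)-\mathcal U^k(t,x)$ rather than your symmetric variant; also, the $\tfrac{\epsilon}{\epsilon_0}\bigl|\tfrac{1}{\sigma^k}\bigr|_{\bv}$ term you list among the Step~3 bounds does not actually arise there (only $\epsilon+\epsilon_0$ do), but this is a harmless overestimate.
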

		\begin{proof} For any $k\in \{1,\dots,N\}$, 
			add the entropy functionals $\Lambda^k_{\epsilon,\epsilon_0}(U^k,V^k)$\\ and $\Lambda^k_{\epsilon,\epsilon_0}(V^k,U^k),$ and invoke the symmetry of $\Phi$ to get:
			\begin{align*}
				\Lambda^k_{\epsilon,\epsilon_0}(U^k,V^k)+\Lambda^k_{\epsilon,\epsilon_0}(V^k,U^k)&=I^k_{\Phi_t}+
				I^k_{\Phi_x}+I^k_{\Phi}+I^k_0-I^k_T,\end{align*}  with \\
			\begin{align*}
				I_{\Phi_t}^k&= \int_{Q_T^2}\left|U^k(t,x)-\frac{\overline{V}^k(s,y)}{ \sigma^k(x)}\right| \Phi_t(t,x,s,y) \d{x} \d{t} \d{y} \d{s}\\
				&\quad-\int_{Q_T^2}\left|V^k(s,y)-\frac{\overline{U}^k(t,x)}{ \sigma^k(y)}\right|\Phi_t(t,x,s,y) \d{x} \d{t} \d{y} \d{s},\\[2mm]
				I^k_{\Phi_x}&=\int_{Q_T^2}G^k(\overline{U}^k(t,x),\overline{V}^k(s,y))(\mathcal{U}^k(t,x)-\mathcal{V}^k(s,y)) \Phi_{x}(t,x,s,y)\d t \d x \d s \d y,\\[2mm]
				I^k_{\Phi}&=-\int_{Q^2_T} \sgn (\overline{U}^k(t,x)-\overline{V}^k(s,y))f^k(\overline{V}^k(s,y))  \mathcal{U}^k_x(t,x)\Phi(t,x,s,y) \d x \d t  \d y \d s\\
				&\quad +\int_{Q^2_T}\sgn (\overline{U}^k(t,x)-\overline{V}^k(s,y))f^k(\overline{U}^k(t,x))  \mathcal{V}^k_y(s,y)\Phi(t,x,s,y) \d x \d t  \d y \d s,\\[2mm]
				I^k_T&=\int_{Q_T}\int_{\R}\left|U^k(T,x)-\frac{\overline{V}^k(s,y)}{ \sigma^k(x)}\right|\Phi(t,x,T,y)\d y  \d x \d t\\
				&\quad+\int_{Q_T}\int_{\R}\left|V^k(T,y)-\frac{\overline{U}^k(t,x)}{ \sigma^k(y)}\right|\Phi(t,x,T,y)\d y  \d x \d t,\\[2mm]
				I^k_0&=\int_{Q_T}\int_{\R}\left(\left|U^k_0(x)-\frac{\overline{V}^k(s,y)}{ \sigma^k(x)}\right|+\left|V^k_0(y)-\frac{\overline{U}^k(t,x)}{ \sigma^k(y)}\right|\right)\Phi(t,x,0,y)\d x \d y \d t.
			\end{align*}
			Since $\boldsymbol{U}$ is the entropy solution of \eqref{eq:uA}--\eqref{eq:u11A} , we have that $\Lambda^k_{\epsilon,\epsilon_0}(U^k,V^k)\ge 0, $ and hence
			\begin{align}
				\label{kuz}
				I^k_T &\le-  \Lambda^k_{\epsilon,\epsilon_0}(V^k,U^k) + I^k_{\Phi_t}+I^k_{\Phi_x}+I^k_{\Phi}+I^k_0.
			\end{align}
			Since $U^k(t,x)=\frac{\overline{U}^k(t,x)}{ \sigma^k(x)},$  and $|a-b|-|c-d|\le |a-c|+|b-d|,$ we have,
			\begin{align*}
				I^k_{\Phi_t} 
				&\leq \int_{Q_T^2} 
				\left( \abs{\overline{U}^k(t,x)} + \abs{\overline{V}^k(s,y)}\right)\left|\frac{1}{ \sigma^k(x)}-\frac{1}{ \sigma^k(y)}\right|\abs{\omega_{\epsilon_0}'(t-s)} \omega_{\epsilon}(x-y) \d t \d x \d s \d y \\ &\leq T\left(\norma{\overline{U}}_{(L^{\infty}(Q_T))^N}+ \norma{\overline{V}}_{(L^{\infty}(Q_T))^N}\right) \frac{\epsilon}{\epsilon_0} \left|\frac{1}{ s}\right|_{(\bv(\R))^N}.
			\end{align*}
			The terms $I^k_0$ and $I^k_T$ appear for local conservation laws with Panov type flux as well and can be estimated as in \cite[Lemma~2]{GTV2022} to get:
			\begin{align*}
				I^k_T&\ge \norma{U^k(T,\dott)-V^k(T,\dott)}_{L^1(\R)}-\mathcal{C}_1(\epsilon+\epsilon_0+\gamma(V^k,\epsilon_0)),\\
				I^k_0&\le \norma{U^k_0-V^k_0}_{L^1(\R)}+\mathcal{C}_1(\epsilon+\epsilon_0+\gamma(V^k,\epsilon_0)),
			\end{align*}
			where $\mathcal{C}_1=\mathcal{C}_1(|\boldsymbol{U}|_{
				(L^{\infty}([0,T]; \bv(\R)))^{N}}, |\boldsymbol{V}|_{(L^{\infty}([0,T]; \bv(\R)))^{N}},|\boldsymbol{U}|_{\lip([0,T]; L^1(\R))^{N}}).$ It is to be noted that $I^k_{\Phi_x}$  and $I^k_{\Phi}$ comprise of the coupled terms from the system, which we now estimate. Using integration by parts we have, 
			\begin{align*}
				I^k_{\Phi_x}&+I^k_{\Phi}\\=&-\int_{Q^2_T}\Phi G^k_x(\overline{U}^k(t,x),\overline{V}^k(s,y))(\mathcal{U}^k(t,x)-\mathcal{V}^k(s,y))\d t \d x \d s \d y\\
				&-\int_{Q^2_T}\Phi \sgn (\overline{U}^k(t,x)-\overline{V}^k(s,y)) f^k(\overline{U}^k(t,x))\mathcal{U}^k_x(t,x)\d t \d x \d s \d y\\
				&+\int_{Q^2_T}\Phi\sgn (\overline{U}^k(t,x)-\overline{V}^k(s,y)) f^k(\overline{U}^k(t,x))  \mathcal{V}^k_y(s,y)\d x dt \d s \d y \\:=&I^k_\mathcal{U}+I^k_{\mathcal{U}_x}.
			\end{align*}
			Let $\abs{\overline{U}^k_x}$ denote the absolute value of the Radon measure $\overline{U}^k_x$ which is finite as it is a distributional derivative of a $\bv$ function. Owing to the Lipschitz continuity of the entropy flux $G,$ we have
			$|\partial_xG^k(\overline{U}^k(t,x),\overline{V}^k(s,y))|\le \abs{f^k}_{\lipR}\abs{\overline{U}^k_x}$ (see~[Lemma~A2.1]\cite{BP1998} for details), implying
			\begin{align*}
				I_{\mathcal{U}^k}&\le \abs{f^k}_{\lipR}\int_{Q^2_T}\Phi\abs{\overline{U}^k_x}\abs{\mathcal{V}^k(s,y)-\mathcal{U}^k(t,x)}\d{x} \d{y}\d{t} \d{s}.
			\end{align*}
			Further, the nonlocal weights $\mathcal{V}^k(s,y),\mathcal{U}^k(t,x)$ satisfy, 
			\begin{align}
				&\abs{\mathcal{V}^k(s,y)-\mathcal{U}^k(t,x)}\nonumber\\
				&\quad \leq\nonumber\abs{\mathcal{V}^k(s,y)-\mathcal{V}^k(s,x)}+\abs{\mathcal{V}^k(s,x)-\mathcal{U}^k(t,x)}\nonumber\\
				&\quad=\nonumber|\nu^k((\boldsymbol{\mu}*\boldsymbol{V})^k(s,y))-\nu^k((\boldsymbol{\mu}*\boldsymbol{V})^k(s,x))|\\
				&\qquad+|\nu^k((\boldsymbol{\mu}*\boldsymbol{V})^k(s,x))-\nu^k((\boldsymbol{\mu}*\boldsymbol{U})^k(t,x))|\nonumber\\&\quad \leq \abs{\nu^k}_{\lipR}\norma{(\boldsymbol{\mu}*\boldsymbol{V})^k(s,y)-(\boldsymbol{\mu}*\boldsymbol{V})^k(s,x)}\nonumber\\&\qquad+\abs{\nu^k}_{\lipR}\norma{(\boldsymbol{\mu}*\boldsymbol{V})^k(s,x)-(\boldsymbol{\mu}*\boldsymbol{U})^k(t,x)}\nonumber\\
				&\quad=\abs{\nu^k}_{\lipR}\norma{\boldsymbol{\mu}'}_{(L^{\infty}(\R))^{N^2}}\norma{\boldsymbol{V}(s,\dott)}_{(L^1(\R))^N}\abs{y-x}\nonumber\\&\qquad+\abs{\nu^k}_{\lipR} \norma{\boldsymbol{\mu}}_{(L^{\infty}(\R))^{N^2}}
				\norma{\boldsymbol{V}(s,\dott)-\boldsymbol{U}(t,\dott)}_{(L^1(\R))^N}\nonumber. 
			\end{align}
			It can been seen that the estimates on nonlocal weights for any $k,$ depend on all the $N$ components of $\boldsymbol{U}$ and $\boldsymbol{V}.$
			Consequently, we get:
			\begin{align*}
				I_{\mathcal{U}^k}&\leq\abs{f^k}_{\lipR}\abs{\nu^k}_{\lipR}\norma{\boldsymbol{\mu}'}_{(L^{\infty}(\R))^{N^2}}\\&\qquad \times \int_{Q^2_T}\Phi\abs{\overline{U}^k_x}\norma{\boldsymbol{V}(s,\dott)}_{(L^1(\R))^N}\abs{y-x}\d{x} \d{y}\d{t} \d{s}\\
				&\quad +\abs{f^k}_{\lipR}\abs{\nu^k}_{\lipR}\norma{\boldsymbol{\mu}}_{(L^{\infty}(\R))^{N^2}}\\&\qquad\times\int_{Q^2_T}\Phi\abs{\overline{U}^k_x}
				\norma{\boldsymbol{V}(s,\dott)-\boldsymbol{U}(t,\dott)}_{(L^1(\R))^N}\d{x} \d{y}\d{t} \d{s}\\
				&\leq
				\abs{f^k}_{\lipR}\abs{\nu^k}_{\lipR}\norma{\boldsymbol{\mu}'}_{(L^{\infty}(\R))^{N^2}}\\&\qquad \times \int_{Q^2_T}\Phi\abs{\overline{U}^k_x}\norma{\boldsymbol{V}(s,\dott)}_{(L^1(\R))^N}\abs{y-x}\d{x} \d{y}\d{t} \d{s}\\
				&\quad+\abs{f^k}_{\lipR}\abs{\nu^k}_{\lipR}\norma{\boldsymbol{\mu}}_{(L^{\infty}(\R))^{N^2}}\\&\qquad\quad\times\int_{Q^2_T}\Phi\abs{\overline{U}^k_x}
				\norma{\boldsymbol{U}(s,\dott)-\boldsymbol{U}(t,\dott)}_{(L^1(\R))^N}\d{x} \d{y}\d{t}\d{s}\\&\quad+\abs{f^k}_{\lipR}\abs{\nu^k}_{\lipR} \norma{\boldsymbol{\mu}}_{(L^{\infty}(\R))^{N^2}}\\&\qquad\quad\times\int_{Q^2_T}\Phi\abs{\overline{U}^k_x}
				\norma{\boldsymbol{V}(s,\dott)-\boldsymbol{U}(s,\dott)}_{(L^1(\R))^N}\d{x} \d{y}\d{t} \d{s},
				\\&:= I^1_{\mathcal{U}^k}+I^2_{\mathcal{U}^k}+I^3_{\mathcal{U}^k},
			\end{align*}
			where
			\begin{align*}
				I^1_{\mathcal{U}^k
				}
				&\le\abs{f^k}_{\lipR}\abs{\nu^k}_{\lipR}\norma{\boldsymbol{\mu}'}_{(L^{\infty}(\R))^{N^2}}\\&\qquad \times  \int_{Q_T}\int_{Q_T}\omega_{\epsilon}(x-y)\omega_{{\epsilon}_0}(t-s)\abs{\overline{U}^k_x}\norma{\boldsymbol{V}(s,\dott)}_{(L^1(\R))^N}\epsilon\d{x}
				\d{y}\d{t}\d{s}\\
				&\leq\abs{f^k}_{\lipR}\abs{\nu^k}_{\lipR}\norma{\boldsymbol{\mu}'}_{(L^{\infty}(\R))^{N^2}}\abs{\overline{U}^k}_{(L^{\infty}([0,T];\bv(\R)))}\norma{\boldsymbol{V}}_{(L^1(Q_T))^N}\epsilon,
				\\[2mm]
				I^2_{\mathcal{U}^k}
				&\le\abs{f^k}_{\lipR}\abs{\nu^k}_{\lipR} |\boldsymbol{U}|_{(\lip(Q_T))^N}\norma{\boldsymbol{\mu}}_{(L^{\infty}(\R))^{N^2}}\\&\qquad \times  \int_{Q^2_T}\omega_{\epsilon}(x-y)\omega_{{\epsilon}_0}(t-s)\abs{\overline{U}^k_x}
				|t-s|\d{x} \d{y}\d{t}\d{s}\\
				&\le \abs{f^k}_{\lipR}\abs{\nu^k}_{\lipR}\abs{\overline{U}^k}_{(L^{\infty}([0,T];\bv(\R)))}|\boldsymbol{U}|_{(\lip(Q_T))^N} \norma{\boldsymbol{\mu}}_{(L^{\infty}(\R))^{N^2}}\\&\qquad \times \int_0^T\int_0^T\omega_{{\epsilon}_0}(t-s)
				{\epsilon_0}\d{t}\d{s}\\
				&=\abs{f^k}_{\lipR}\abs{\nu^k}_{\lipR}\abs{\overline{U}^k}_{(L^{\infty}([0,T];\bv(\R)))}|\boldsymbol{U}|_{(\lip(Q_T))^N}\norma{\boldsymbol{\mu}}_{(L^{\infty}(\R))^{N^2}} T
				{\epsilon_0},
				\\[2mm]
				I^3_{\mathcal{U}^k}
				&\leq\abs{f^k}_{\lipR}\abs{\nu^k}_{\lipR}\abs{\overline{U}^k}_{(L^{\infty}([0,T];\bv(\R)))}\norma{\boldsymbol{\mu}}_{(L^{\infty}(\R))^{N^2}}\\&\qquad \times \int_{0}^T
				\norma{\boldsymbol{V}(s,\dott)-\boldsymbol{U}(s,\dott)}_{(L^1(\R))^N}\d{s}.
			\end{align*}
			Collectively we get,
			\begin{align*}
				I_{\mathcal{U}^k}&\le \abs{f^k}_{\lipR}\abs{\nu^k}_{\lipR}\abs{\overline{U}^k}_{(L^{\infty}([0,T];\bv(\R)))}\norma{\boldsymbol{\mu}'}_{(L^{\infty}(\R))^{N^2}}\norma{\boldsymbol{V}}_{(L^1(Q_T))^N}\epsilon\\
				&\quad +\abs{f^k}_{\lipR}\abs{\nu^k}_{\lipR}\abs{\overline{U}^k}_{(L^{\infty}([0,T];\bv(\R)))}\norma{\boldsymbol{\mu}}_{(L^{\infty}(\R))^{N^2}}\\&\qquad \times \left(|\boldsymbol{U}|_{(\lip(Q_T))^N}T
				\epsilon_0+\int_{0}^T
				\norma{\boldsymbol{V}(s,\dott)-\boldsymbol{U}(s,\dott)}_{(L^1(\R))^N}\d{s}\right)\\
				&\leq \mathcal{K}_1(\epsilon+\epsilon_0)+K_2 \int_{0}^T
				\norma{\boldsymbol{V}(s,\dott)-\boldsymbol{U}(s,\dott)}_{(L^1(\R))^N}\d{s}.
			\end{align*}
			Similarly, we can show that the following estimate holds.
			\begin{align*}
				I_{\mathcal{U}^k_x}&\leq \mathcal{K}_3(\epsilon+\epsilon_0)+K_4 \int_{0}^T
				\norma{\boldsymbol{V}(s,\dott)-\boldsymbol{U}(s,\dott)}_{(L^1(\R))^N}\d{s}.
			\end{align*}
			Substituting the above estimates in \eqref{kuz} and summing over $k$, we get
			\begin{align*}
				&\norma{\boldsymbol{U}(T)-\boldsymbol{V}(T)}_{(L^1(\R))^N}\\ &\qquad \le\norma{\boldsymbol{V}_0-\boldsymbol{U}_0}_{(L^1(\R))^N}+\mathcal{C}_2\int_{0}^T
				\norma{\boldsymbol{V}(t,\dott)-\boldsymbol{U}(t,\dott)}_{(L^1(\R))^N}\d{t}\\
				&\qquad \quad +\mathcal{C}_1N\left(\epsilon+\epsilon_0+ \frac{\epsilon}{\epsilon_0}\abs{\frac{1}{ s}}_{(\bv(\R))^N}\right)+\mathcal{C}_1\sum\limits_{k=1}^N\left(\gamma(V^k,\epsilon_0)-\Lambda^k_{\epsilon,\epsilon_0}(V^k,U^k)\right).\end{align*}
			Now, the lemma follows by invoking Gronwall's inequality.
		\end{proof}
		\begin{theorem}\label{uniqueness}[Uniqueness of the entropy solution for nonlocal systems]
			For any time $T>0,$ let $\boldsymbol{U,V}$ 
			be the entropy solutions of the IVP for the system \eqref{eq:uA}
			with initial data $\boldsymbol{U}_0,\boldsymbol{V}_0$, respectively. Then, the following holds:
			\begin{align*}
				\norma{\boldsymbol{U}(T,\dott)-\boldsymbol{V}(T,\dott)}_{(L^1(\R))^N} &\leq \mathcal{C}\norma{\boldsymbol{U}_0-\boldsymbol{V}_0}_{(L^1(\R))^N},
			\end{align*}
			where $\mathcal{C}$ is the same as in Lemma~\ref{lemma:kuz}.
			In particular, if $\boldsymbol{U}_0=\boldsymbol{V}_0$ then $\boldsymbol{U}=\boldsymbol{V}$ a.e. in $\overline{Q}_T.$
		\end{theorem}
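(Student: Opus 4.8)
The plan is to read off the theorem directly from the Kuznetsov-type estimate of Lemma~\ref{lemma:kuz}, taking the second function there to be the \emph{other entropy solution} $\boldsymbol{V}$ and then sending the mollification parameters $\epsilon,\epsilon_0$ to zero. First I would check that an entropy solution $\boldsymbol{V}$ of \eqref{eq:uA}--\eqref{eq:u11A} lies in the admissible class $K$: by definition $\boldsymbol{V}\in(C([0,T];L^1(\R))\cap L^\infty([0,T];\bv(\R)))^N$, so $|\boldsymbol{V}|_{(L^\infty([0,T];\bv(\R)))^N}<\infty$; the one-dimensional embedding $\bv(\R)\cap L^1(\R)\hookrightarrow L^\infty(\R)$ then gives $\norma{\boldsymbol{V}}_{(L^\infty(\overline{Q}_T))^N}<\infty$, and the $L^1$-norm of each component is preserved in time (conservation). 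Hence $\boldsymbol{V}\in K$ and Lemma~\ref{lemma:kuz} applies: for every $\epsilon,\epsilon_0>0$,
\begin{align*}
\norma{\boldsymbol{U}(T,\dott)-\boldsymbol{V}(T,\dott)}_{(L^1(\R))^N}
&\le \mathcal{C}\sum_{k=1}^N\bigl(\gamma(V^k,\epsilon_0)-\Lambda^k_{\epsilon,\epsilon_0}(V^k,U^k)\bigr)\\
&\quad+\mathcal{C}\Bigl(\norma{\boldsymbol{U}_0-\boldsymbol{V}_0}_{(L^1(\R))^N}+N(\epsilon+\epsilon_0)+N\tfrac{\epsilon}{\epsilon_0}\normal{\tfrac1s}_{(\bv(\R))^N}\Bigr).
\end{align*}

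The next step is to discard the coupling remainders $-\Lambda^k_{\epsilon,\epsilon_0}(V^k,U^k)$, which are non-positive because $\boldsymbol{V}$ too is an entropy solution. Indeed, for fixed $(s,y)\in Q_T$ the value $\overline{U}^k(s,y)=\sigma^k(y)U^k(s,y)$ is a real constant and $\Phi(\dott,\dott,s,y)$ is a non-negative, spatially compactly supported element of $C^\infty([0,T]\times\R)$; the entropy inequality \eqref{kruz2} for $V^k$, written in the equivalent form that incorporates the terminal-time boundary term (which is precisely how the functional $\Lambda^k_T$ is assembled), gives $\Lambda^k_T\bigl(V^k,\Phi(\dott,\dott,s,y),\overline{U}^k(s,y)/\sigma^k\bigr)\ge 0$ for a.e. $(s,y)$. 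Integrating over $(s,y)\in Q_T$ (Fubini, the integrand being measurable and the data integrable) yields $\Lambda^k_{\epsilon,\epsilon_0}(V^k,U^k)\ge 0$, so the first line of the displayed estimate is at most $\mathcal{C}\sum_{k=1}^N\gamma(V^k,\epsilon_0)$.

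Then I would pass to the limit. The left-hand side is independent of $\epsilon,\epsilon_0$, so taking $\epsilon=\epsilon_0^2$ and letting $\epsilon_0\downarrow 0$ makes $N(\epsilon+\epsilon_0)+N\tfrac{\epsilon}{\epsilon_0}\normal{\tfrac1s}_{(\bv(\R))^N}=N(\epsilon_0^2+2\epsilon_0+\epsilon_0\normal{\tfrac1s}_{(\bv(\R))^N})\to 0$, while $\gamma(V^k,\epsilon_0)\to 0$ because each $V^k\in C([0,T];L^1(\R))$ is uniformly continuous on the compact interval $[0,T]$. This gives
\begin{align*}
\norma{\boldsymbol{U}(T,\dott)-\boldsymbol{V}(T,\dott)}_{(L^1(\R))^N}\le \mathcal{C}\norma{\boldsymbol{U}_0-\boldsymbol{V}_0}_{(L^1(\R))^N},
\end{align*}
with the same $\mathcal{C}$ as in Lemma~\ref{lemma:kuz}. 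Since $T>0$ is arbitrary, applying this with $T$ replaced by any $T'\in(0,T]$, using $\boldsymbol{U}(0,\dott)=\boldsymbol{U}_0=\boldsymbol{V}_0=\boldsymbol{V}(0,\dott)$, and invoking the $C([0,T];L^1(\R))$-regularity shows that $\boldsymbol{U}_0=\boldsymbol{V}_0$ forces $\boldsymbol{U}(t,\dott)=\boldsymbol{V}(t,\dott)$ in $L^1(\R)^N$ for every $t\in[0,T]$, hence $\boldsymbol{U}=\boldsymbol{V}$ a.e.\ in $\overline{Q}_T$.

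There is no genuine obstacle here, since all the analysis is already packaged in Lemma~\ref{lemma:kuz}; the two points that need a little care are the sign of $\Lambda^k_{\epsilon,\epsilon_0}(V^k,U^k)$ — one must recognize $\Lambda^k_T(V^k,\Phi(\dott,\dott,s,y),\overline{U}^k(s,y)/\sigma^k)$ as exactly the (terminal-time) entropy functional of the entropy solution $V^k$ tested against the admissible non-negative function $\Phi(\dott,\dott,s,y)$ with constant $\overline{U}^k(s,y)$ — and the coupling of the two scales, i.e.\ choosing $\epsilon=\epsilon_0^2$ so that the term $\tfrac{\epsilon}{\epsilon_0}\normal{\tfrac1s}_{(\bv(\R))^N}$ (the price of the spatial discontinuities of $\boldsymbol{\sigma}$) still vanishes in the limit while $\gamma(V^k,\epsilon_0)$ does as well.
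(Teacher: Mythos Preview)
Your proof is correct and follows essentially the same route as the paper: use that $\boldsymbol{V}$ is an entropy solution to get $\Lambda^k_{\epsilon,\epsilon_0}(V^k,U^k)\ge 0$, then pass to the limit in the Kuznetsov estimate of Lemma~\ref{lemma:kuz}. The only cosmetic difference is that the paper sends $\epsilon\to 0^+$ first and then $\epsilon_0\to 0^+$ (rather than coupling $\epsilon=\epsilon_0^2$), which kills the $\epsilon/\epsilon_0$ term immediately; your coupled limit works equally well.
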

		\begin{proof}
			Since $\boldsymbol{U,V}$ are the entropy solutions, we have $-\sum_{k=1}^N\Lambda^k_{\epsilon,\epsilon_0}(V^k,U^k) \leq 0,$ for all $\epsilon,\epsilon_0>0.$
			Now, the proof follows from Lemma~\ref{lemma:kuz} by sending $\epsilon \rightarrow 0^+$ and then $\epsilon_0 \rightarrow 0^+.$
		\end{proof}
		\begin{remark}\normalfont
			Theorem~\ref{uniqueness}, in combination with Theorem~\ref{Existence} (2), 
			implies that in fact the entire sequence $\boldsymbol{U}_{\D}$ of the finite volume approximations converge to the unique entropy solution $\boldsymbol{U}$ in $(L_{\rm loc}^1(\overline{Q}_T))^N$ and pointwise a.e. in $\overline{Q}_T.$
		\end{remark}
		In the next section, we establish the convergence rates for the proposed numerical schemes.

		%------------------ section
		\section{Error Estimate}\label{sec:error}
		For every $n  \in  \{0, \ldots, N_T\},~  i\in \Z,~ 
		k  \in  \{1, \ldots, N\}$, $ (t,x)\in \overline{Q}_T$ and $\alpha\in\R,$ we define the following:
		\begin{enumerate}[i.]
			\item $\eta_{i}^{k,n}(\alpha):=\left|U_i^{k,n}- \frac{\alpha}{\sigma_i^k}\right|$,
			\item $p_i^{k,n}(\alpha):=\sgn (\overline{U}_i^{k,n}-\alpha) (f^k(\overline{U}_i^{k,n})-f^k(\alpha)) $,
			\item $\mathcal{U}^{k}_{\Delta}(t,x):= \nu^k((\boldsymbol{\mu}*\boldsymbol{U}_{\Delta})^k(t,x))$.
		\end{enumerate}
		It is to be noted that owing to the discontinuous flux, $\Lambda^k_{\epsilon,\epsilon_0}$ has non-piecewise constant terms arising from $\boldsymbol{\sigma}(x)$, unlike the classical cases considered in \cite{HR2015}. In comparison to a similar case considered in \cite{GTV2022a}, the PDE \eqref{eq:uA}--\eqref{eq:u11A} consists of nonlocal coupled terms in addition, making it non-trivial. To address this issue, we first prove an estimate on $\Lambda^k_{\Delta,\epsilon,\epsilon_0}$, an appropriate approximation of $\Lambda^k_{\epsilon,\epsilon_0},$ (with $\boldsymbol{\sigma}_{\D}$ in place of $\boldsymbol{\sigma}$) in Lemma~\ref{lemma:est} and then estimate the difference between $\Lambda^k_{\epsilon,\epsilon_0}$  and $\Lambda^k_{\Delta,\epsilon,\epsilon_0}.$ If the local part of the flux is homogeneous (i.e.,~$\TV( s)=0$), then this step is not required and in fact, $\Lambda^k_{\epsilon,\epsilon_0}=\Lambda^k_{\Delta,\epsilon,\epsilon_0}.$ 
		\begin{lemma}\label{lemma:est} For every $k\in\{1,\ldots,N\},$ the relative entropy functional \\$\Lambda^k_{\epsilon,\epsilon_0} (U^{k}_{\Delta},U^k)$ satisfies:
			\begin{align}
				\nonumber-\Lambda^k_{\epsilon,\epsilon_0} (U^{k}_{\Delta},U^k)&\leq \mathcal{M}\left(\frac{\D x}{\epsilon}+\frac{\D t}{\epsilon_0}\right),
			\end{align}
			where $\mathcal{M}$ is a constant independent of $\Delta x, \Delta t.$
		\end{lemma}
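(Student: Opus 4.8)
The plan is to start from the discrete entropy inequality in Theorem~\ref{Existence}(6), multiply it by the test function $\Phi(t^n,x,s,y)$ evaluated at grid points, and sum over $i$ and $n$. Since $\Lambda^k_{\epsilon,\epsilon_0}(U^k_\Delta,U^k)$ is built by integrating $\Lambda^k_T$ against $\Phi$, the first step is to rewrite the continuous functional $\Lambda^k_T(U^k_\Delta(\cdot,\cdot),\Phi(\cdot,\cdot,s,y),\alpha/\sigma^k)$ — with $\sigma^k$ replaced by $\sigma^k_\Delta$, giving $\Lambda^k_{\Delta,\epsilon,\epsilon_0}$ — as a sum over cells $C^n_i$ of the piecewise-constant data, plus a remainder. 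Because $U^k_\Delta$, $\overline U^k_\Delta$, and $\sigma^k_\Delta$ are all piecewise constant, the time-derivative term $\int \eta^{k,n}_i \Phi_t$ telescopes in $n$, and the flux terms involving $\Phi_x$ can be handled via summation by parts in $i$; this produces exactly the discrete difference $G^{k,n}_{i+1/2}-G^{k,n}_{i-1/2}$ and the $f^k(\alpha)(\nu^k(\boldsymbol c^{k,n}_{i+1/2})-\nu^k(\boldsymbol c^{k,n}_{i-1/2}))$ term, against which the discrete entropy inequality can be applied to obtain a sign.

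The key bookkeeping is that replacing integrals of $\Phi$ over $C^n_i$ by point values $\Phi(t^n,x_i,s,y)$ (or $\Phi(t^n,x_{i+1/2},s,y)$ for the flux terms at interfaces) costs $\mathcal O(\Delta x^2/\epsilon + \Delta t^2/\epsilon_0)$ per cell by \eqref{est:3}–\eqref{est:4}, and after summing over the $\mathcal O(1/(\Delta x\,\Delta t))$ cells within the support one is left with $\mathcal O(\Delta x/\epsilon + \Delta t/\epsilon_0)$. Here one uses the uniform bounds from Theorem~\ref{Existence}: the $L^\infty$ bound \eqref{apx:Linf}, the BV bound \eqref{apx:bv}, the time-continuity estimate \eqref{apx:time}, together with Lemma~\ref{lem:AB} to control the nonlocal terms $\nu^k(\boldsymbol c^{k,n}_{i\pm1/2})$ and their differences. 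The consistency error of the numerical flux $F^{k,n}_{i+1/2}$ with the true flux $f^k(\overline U^k)\nu^k(\cdot)$ — of order $\Delta x$ for the Lax–Friedrichs viscosity term and for the nonlocal convolution quadrature (again via Lemma~\ref{lem:AB}) — also feeds in at order $\Delta x/\epsilon$ after the weighted summation. All these contributions are collected into the single constant $\mathcal M$, which depends on $\boldsymbol f,\boldsymbol\mu,\boldsymbol\nu,\boldsymbol\sigma, T$, the initial data, and hence (through the a priori bounds) is independent of $\Delta x,\Delta t$.

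The main obstacle, as flagged in the text preceding the lemma, is the non-piecewise-constant factor $1/\sigma^k(x)$ appearing inside $\eta$ and in the adapted entropy: one cannot directly telescope terms containing $\alpha/\sigma^k(x)$. The remedy is to work first with $\Lambda^k_{\Delta,\epsilon,\epsilon_0}$, in which $\sigma^k$ is replaced by its grid sampling $\sigma^k_\Delta$ so that everything is genuinely cell-wise constant and the above telescoping is legitimate; the estimate $-\Lambda^k_{\Delta,\epsilon,\epsilon_0}(U^k_\Delta,U^k)\le \mathcal M(\Delta x/\epsilon+\Delta t/\epsilon_0)$ then follows as sketched. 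The passage from $\Lambda^k_{\Delta,\epsilon,\epsilon_0}$ back to $\Lambda^k_{\epsilon,\epsilon_0}$ is a separate estimate on $|\Lambda^k_{\epsilon,\epsilon_0}-\Lambda^k_{\Delta,\epsilon,\epsilon_0}|$, controlled by $\|1/\sigma^k-1/\sigma^k_\Delta\|_{L^1}$ and $|1/\sigma^k|_{\bv}$ using $\sigma^k\in\bv$ and $\inf\sigma^k>0$ from \ref{H3A}, which is $\mathcal O(\Delta x)$ and therefore also absorbed into the right-hand side; when $\TV(\sigma^k)=0$ this step is vacuous and $\Lambda^k_{\epsilon,\epsilon_0}=\Lambda^k_{\Delta,\epsilon,\epsilon_0}$, as noted. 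A secondary technical point is handling the coupled nonlocal term $\partial_x\nu^k((\boldsymbol\mu*\boldsymbol U_\Delta)^k)$, which must be compared with the discrete increment $\nu^k(\boldsymbol c^{k,n}_{i+1/2})-\nu^k(\boldsymbol c^{k,n}_{i-1/2})$; here the second-order estimate in Lemma~\ref{lem:AB} gives the needed $\mathcal O(\Delta x)$ consistency, uniformly in the other components because the $L^1$ norms $\|\boldsymbol U_\Delta(t^n)\|$ are conserved by \eqref{apx:L1}.
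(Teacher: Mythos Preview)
Your proposal is correct and follows essentially the same approach as the paper: introduce the auxiliary functional $\Lambda^k_{\Delta,\epsilon,\epsilon_0}$ with $\sigma^k_\Delta$ in place of $\sigma^k$, telescope in $n$ and sum by parts in $i$, invoke the discrete entropy inequality of Theorem~\ref{Existence}(6), and control the resulting remainders via \eqref{est:3}--\eqref{est:4}, Lemma~\ref{lem:AB}, and the a~priori bounds \eqref{apx:L1}--\eqref{apx:time}. One small correction: the passage $|\Lambda^k_{\epsilon,\epsilon_0}-\Lambda^k_{\Delta,\epsilon,\epsilon_0}|$ is not $\mathcal O(\Delta x)$ but $\mathcal O\!\left(\Delta x+\dfrac{\Delta x}{\epsilon}+\dfrac{\Delta x}{\epsilon_0}\right)$, since the difference $|\alpha/\sigma^k-\alpha/\sigma^k_\Delta|$ is integrated against $|\Phi_t|$ and $|\Phi_x|$; this is still absorbed into the stated bound.
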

		\begin{proof}For any $k\in\{1,\ldots,N\}$, we define
			\begin{align*}
				&-\Lambda^k_{\Delta,\epsilon,\epsilon_0}(U^{k}_{\Delta},U^k)\\&\quad:= -\int_{Q_T}\sum_{i\in \Z}\sum\limits_{n=0}^{N_T-1}   \int_{C_i^n}\eta_{i}^{k,n}(\overline{U}^k(s,y))\Phi_t(s,y,t,x) \d t  \d x \d s \d y \nonumber\\
				&\qquad-\int_{Q_T}\sum_{i\in \Z}\sum\limits_{n=0}^{N_T-1}   \int_{C_i^n} p_i^{k,n}(\overline{U}^k(s,y))\mathcal{U}^{k}_{\Delta}(t,x)\Phi_x(s,y,t,x) \d t  \d x \d s \d y \nonumber\\
				&\qquad + \int_{Q_T}\sum_{i\in \Z}\sum\limits_{n=0}^{N_T-1} \int_{C_i^n}\sgn\left(U_i^{k,n}-\frac{\overline{U}^k(s,y)}{ \sigma^k_i}\right) \\&\qquad \qquad\qquad\qquad\quad\times f^k(\overline{U}^k(s,y))\partial_x\mathcal{U}^{k}_{\Delta}(t,x)\Phi(s,y,t,x) \d t  \d x \d s \d y \\
				&\qquad- \int_{Q_T}\sum_{i}   \int_{C_i}  \eta_{i}^{k,0}(\overline{U}^k(s,y)) \Phi(s,y,0,x)  \d x\d s  \d y\\&\qquad +\int_{Q_T} \sum_{i}  \int_{C_i}  \eta_{i}^{k,N_T}(\overline{U}^k(s,y))  \Phi(s,y,T,x)  \d x\d s  \d y.\end{align*}
			Consequently,
			\begin{align*}&\abs{\Lambda^k_{\Delta,\epsilon,\epsilon_0} (U^{k}_{\Delta},U^k)-\Lambda^k_{\epsilon,\epsilon_0} (U^{k}_{\Delta},U^k)}\\ &\quad\leq C \int_{Q_T^2}  \left|\frac{\alpha}{\sigma^k(x)}-\frac{\alpha}{\sigma^k_{\D}(x)}\right| (\abs{\Phi_t}+ \norma{\mathcal{U}}_{L^{\infty}(Q_T)} \abs{\Phi_x}) \d x\d t  \d s \d y \\&\qquad + C \int_{Q_T^2}  \left|\frac{\alpha}{\sigma^k(x)}-\frac{\alpha}{\sigma^k_{\D}(x)}\right|\norma{\mathcal{U}_x}_{L^{\infty}(Q_T)}\abs{\Phi}\d x\d t  \d s \d y \\&\qquad+ C\int_{ Q_T \times \R} \left| \frac{\alpha}{\sigma^k(x)}-\frac{\alpha}{\sigma^k_{\D}(x)}\right|\left( \abs{\Phi(\dott,\dott,\dott,T) } + 
				\abs{\Phi(\dott,\dott,\dott,0)}\right) \d x\d s  \d y\\ &\quad \leq C \left( \D x + \frac{\D x}{\epsilon}+ \frac{\D x}{\epsilon_0}\right).
			\end{align*}
			In what follows, we estimate $\Lambda^k_{\Delta,\epsilon,\epsilon_0} (U^{k}_{\Delta},U^k),$ which is enough to get the desired result. 
			To begin with, we invoke fundamental theorem of calculus followed by summation by parts to get, 
			\begin{align*}
				&\Lambda^k_{\Delta,\epsilon,\epsilon_0} (U^{k}_{\Delta},U^k)\\&\quad=\int_{Q_T}\sum_{i\in \Z}\sum\limits_{n=0}^{N_T-1} \left(\eta_i^{k,n+1}(\overline{U}^k(s,y))-\eta_i^{k,n}(\overline{U}^k(s,y))\right)\int_{C_i}\Phi(s,y,t^{n+1},x)  \d x\d s  \d y\\
				&\qquad-\int_{Q_T}\sum_{i\in \Z}\sum\limits_{n=0}^{N_T-1}   \int_{C_i^n}p_i^{k,n}(\overline{U}^k(s,y))\mathcal{U}^{k}_{\Delta}(t,x)\Phi_x(s,y,t,x) \d t  \d x \d s \d y  \nonumber\\
				&\qquad + \int_{Q_T}\sum_{i\in \Z}\sum\limits_{n=0}^{N_T-1}\int_{C_i^n}\sgn\left(U_i^{k,n}-\frac{\overline{U}^k(s,y)}{ \sigma^k_i}\right)    \\&\qquad \qquad\qquad\qquad\quad\times f^k(\overline{U}^k(s,y))\partial_x\mathcal{U}^{k}_{\Delta}(t,x)\Phi(s,y,t,x) \d t  \d x \d s \d y 
				\\&\quad:=\lambda_1+\lambda_2+\lambda_3.
			\end{align*}
			Now, we make the following estimates:
			\begin{align*}
				\lambda_2
				&=-\int_{Q_T}\sum_{i\in \Z}\sum\limits_{n=0}^{N_T-1}  \int_{C_i^n}p_i^{k,n}(\overline{U}^k(s,y))\nu^k({\boldsymbol{c}}_{i+1/2}^{k,n})\Phi_x(s,y,t,x)  \d t\d x\d s  \d y\\ 
				&\quad-\int_{Q_T}\sum_{i\in \Z}\sum\limits_{n=0}^{N_T-1} \int_{C_i^n}p_i^{k,n}(\overline{U}^k(s,y)) \\&\qquad \qquad\qquad\qquad\quad\times   \Phi_x(s,y,t,x) \left(\mathcal{U}^{k}_{\Delta}(t,x)-\nu^k({\boldsymbol{c}}_{i+1/2}^{k,n})\right)  \d t\d x\d s  \d y\\
				&:=\lambda_{2'}+ \mathcal{E}_2.
			\end{align*}
			For $(t,x)\in C_i^n,$ we have 
			\begin{align*}
				&\left|\mathcal{U}^{k}_{\Delta}(t,x)-\nu^k({\boldsymbol{c}}_{i+1/2}^{k,n})\right|\\&\quad
				\le \left|\mathcal{U}^{k}_{\Delta}(t,x)-\mathcal{U}^{k}_{\Delta}(t^n,x)\right| +\left|\mathcal{U}^{k}_{\Delta}(t^n,x)-\mathcal{U}^{k}_{\Delta}(t^n,x_{i+1/2})\right|\\&\qquad +\left|\mathcal{U}^{k}_{\Delta}(t^n,x_{i+1/2})-\nu^k({\boldsymbol{c}}_{i+1/2}^{k,n})\right|.
			\end{align*}
			Furthermore,  
			\begin{align}  
				\nonumber
				&\abs{\mathcal{U}^{k}_{\Delta}(t^n,x_{i+1/2})-\nu^k({\boldsymbol{c}}_{i+1/2}^{k,n})}\\
				&\quad\le \Delta x\abs{\nu^k}_{\lipR}\norma{\boldsymbol{\mu}'}_{(L^{\infty}(\R))^{N^2}}\norma{\boldsymbol{U}_{\Delta}}_{(L^1(Q_T))^N} :=\mathcal{M}_1\Delta x\label{est:Uminusc}.
			\end{align}
			Now using the Lipschitz continuity of $\mathcal{U}^{k}_{\Delta}$ in the space variable, we have
			\begin{align*}
				&\abs{\mathcal{U}^{k}_{\Delta}(t^n,x)-\mathcal{U}^{k}_{\Delta}(t^n,x_{i+1/2})}\leq \mathcal{M}_2\Delta x.
			\end{align*}
			Also for $t \in C^n,$ using \eqref{apx:time}, we have\begin{align*}
				\abs{\mathcal{U}^{k}_{\Delta}(t^n,x)-\mathcal{U}^{k}_{\Delta}(t,x)}
				&\le \abs{\nu^k}_{\lipR}\norma{\boldsymbol{\mu}}_{(\L\infty(\R))^{N^2}}\mathcal{K}_6\Delta t:=\mathcal{M}_3\Delta x. 
			\end{align*}
			Finally, combining all the above estimates, we get, \begin{align*}
				\abs{\mathcal{E}_2}
				&\leq\mathcal{M}_4\Delta x \int_{Q_T}\sum_{i\in \Z}\sum\limits_{n=0}^{N_T-1}  \left(\abs{f^k(\overline{U}_i^{k,n})}+\abs{f^k(\overline{U}^k(s,y)}\right)\\&\qquad \qquad\qquad\qquad\qquad\quad\times \int_{C_i^n} \modulo{\Phi_x(s,y,t,x)}   \d t\d x\d s  \d y\\
				&=\mathcal{M}_4\Delta x \sum_{i\in \Z}\sum\limits_{n=0}^{N_T-1}\int_{C_i^n}   \abs{f^k(\overline{U}_i^{k,n})}\int_{Q_T}\abs{\Phi_x(s,y,t,x)}   \d t\d x\d s  \d y\\
				&\qquad+\mathcal{M}_4\Delta x\int_{Q_T}\int_{Q_T}  \abs{f^k(\overline{U}^k(s,y)\Phi_x(s,y,t,x)  } \d t\d x\d s  \d y\\
				&=\tilde{\mathcal{M}_4}\abs{f^k}_{\lipR}\norma{\overline{U}^{k}_{\D}}_{L^1(Q_T)}\frac{\Delta x}{\epsilon}+  \tilde{\mathcal{M}_4}\abs{f^k}_{\lipR}\norma{\overline{U}^k}_{L^1(Q_T)}\frac{\Delta x}{\epsilon}.
			\end{align*}
			Thus, we have
			\begin{equation*}
				\lambda_2= \lambda_2'+\mathcal{O}\left(\frac{\Delta x}{\epsilon}\right).
			\end{equation*}
			Now, we consider
			\begin{align*}
				\lambda_3
				&= \int_{Q_T}\sum_{i\in \Z}\sum\limits_{n=0}^{N_T-1}  \sgn\left(U_i^{k,n}-\frac{\overline{U}^k(s,y)}{ \sigma^k_i}\right)(\nu^k({\boldsymbol{c}}_{i+1/2}^{k,n})-\nu^k({\boldsymbol{c}}_{i-1/2}^{k,n}))\\
				\\&\qquad \qquad\qquad\qquad\times  f^k(\overline{U}^k(s,y)) \int_{C^{n}} \Phi(s,y,t,x_{i+1/2})  \d t \d s \d y \\
				&\qquad+ \int_{Q_T}\sum_{i\in \Z}\sum\limits_{n=0}^{N_T-1}  \sgn\left(U_i^{k,n}-\frac{\overline{U}^k(s,y)}{ \sigma^k_i}\right)
				f^k(\overline{U}^k(s,y)) \int_{C_i^n}\mathcal{U}^{k}_{\Delta,x}(t,x)\\
				\\&\qquad \qquad\qquad\qquad\qquad\times  
				\left(\Phi(s,y,t,x)-\Phi(s,y,t,x_{i+1/2})\right) \d x \d s \d t \d y\\
				&\qquad+ \int_{Q_T}\sum_{i\in \Z}\sum\limits_{n=0}^{N_T-1}  \sgn\left(U_i^{k,n}-\frac{\overline{U}^k(s,y)}{ \sigma^k_i}\right) f^k(\overline{U}^k(s,y))\int_{C_i^n}\Phi(s,y,t,x_{i+1/2})  \\
				\\&\qquad \qquad\qquad\qquad\qquad\times 
				\left(\mathcal{U}^{k}_{\Delta,x}(t,x)  -\frac{\nu^k({\boldsymbol{c}}_{i+1/2}^{k,n})-\nu^k({\boldsymbol{c}}_{i-1/2}^{k,n})}{\Delta x}\right)\d x \d s \d t \d y\\
				&:=\lambda_3'+\mathcal{E}_{31}+\mathcal{E}_{32}.
			\end{align*}
			Further,
			\begin{align*}
				\abs{\mathcal{E}_{31}} &\leq  \int_{Q_T}\sum_{i\in \Z}\sum\limits_{n=0}^{N_T-1}  \abs{f^k(\overline{U}^k(s,y)) }
				\\&\qquad \qquad\times
				\int_{C_i^n} \abs{\mathcal{U}^{k}_{\Delta,x}(t,x) }  \abs{ \Phi(s,y,t,x)-\Phi(s,y,t,x_{i+1/2})} \d t \d x \d s \d y \\ &\quad\leq 
				|\nu^k|_{\lipR}\norma{\boldsymbol{\mu'}}_{(L^\infty(\R))^{N^2}} \norma{\boldsymbol{U}_{0}}_{(L^1(\R))^N}
				\int_{Q_T}\sum_{i\in \Z}\sum\limits_{n=0}^{N_T-1}  \abs{f^k(\overline{U}^k(s,y)) }\\
				\\&\qquad \qquad\times
				\int_{C_i^n}   \abs{ \Phi(s,y,t,x)-\Phi(s,y,t,x_{i+1/2})} \d x \d s \d t \d y\\&\quad\leq  |\nu^k|_{\lipR}\norma{\boldsymbol{\mu'}}_{(L^\infty(\R))^{N^2}} \norma{\boldsymbol{U}_{0}}_{(L^1(\R))^N}
				\int_{Q_T} \abs{f^k(\overline{U}^k(s,y)) }
				\\&\qquad \qquad\times
				\sum_{i}  \int_{C_i}     \abs{\omega_{\epsilon} (y-x)-\omega_{\epsilon}(y-x_{i+1/2})} \d x \d s  \d y
				\\&\quad\leq  |\nu^k|_{\lipR}\norma{\boldsymbol{\mu'}}_{(L^\infty(\R))^{N^2}} \norma{\boldsymbol{U}_{0}}_{(L^1(\R))^N}|\omega_{\epsilon}|_{BV(\R)} \D x \int_{Q_T} \abs{f^k(\overline{U}^k(s,y)) }
				\d s  \d y \\
				&\quad\leq |\nu^k|_{\lipR}\norma{\boldsymbol{\mu'}}_{(L^\infty(\R))^{N^2}} \norma{\boldsymbol{U}_{0}}_{(L^1(\R))^N}\norma{f^k(\overline{U}^k)}_{L^1(Q_T)}\frac{\D x}{\epsilon},
			\end{align*}
			since 
			\begin{align*}
				|\mathcal{U}^{k}_{\Delta,x}(t,x)|&=|\nu^{k}_x((\boldsymbol{\mu}*\boldsymbol{U})^k)(t,x)(\boldsymbol{\mu'}*\boldsymbol{U})^k(t,x)|\\
				&\leq |\nu^k|_{\lipR}\norma{\boldsymbol{\mu'}}_{(L^\infty(\R))^{N^2}} \norma{\boldsymbol{U}_{0}}_{(L^1(\R))^N}.
			\end{align*}
			Next, we consider
			\begin{align*}
				\abs{\mathcal{E}_{32} }
				&=\big|\int_{Q_T}\sum_{i\in \Z}\sum\limits_{n=0}^{N_T-1}  f^k(\overline{U}^k(s,y)) \int_{C^n}\Phi(s,y,t,x_{i+1/2})
				\\&\qquad \times \int_{C_i}  \partial_x\mathcal{U}^{k}_{\Delta}(t,x)-\frac{\nu^k({\boldsymbol{c}}_{i+1/2}^{k,n})-\nu^k({\boldsymbol{c}}_{i-1/2}^{k,n})}{\Delta x}\d x \d s \d t \d y\big|. 
			\end{align*}
			Applying
			the fundamental theorem of calculus and rearranging the terms, we get
			\begin{align*}
				\abs{\mathcal{E}_{32}}&=\Big|\int_{Q_T}\sum_{i\in \Z}\sum\limits_{n=0}^{N_T-1}  \left(
				\mathcal{U}^{k}_{\Delta}(t^n,x_{i+1/2})- \mathcal{U}^{k}_{\Delta}(t^n,x_{i-1/2})-\nu^k({\boldsymbol{c}}_{i+1/2}^{k,n})+\nu^k({\boldsymbol{c}}_{i-1/2}^{k,n})\right)  \\&\qquad\qquad\qquad\qquad\times{\int_{C^n}f^k(\overline{U}^k(s,y))\Phi(s,y,t,x_{i+1/2}) \d s \d t \d y} 
				\\
				&\quad+{\int_{Q_T}\sum_{i\in \Z}\sum\limits_{n=0}^{N_T-1}  f^k(\overline{U}^k(s,y)) \int_{C^n}\Phi(s,y,t,x_{i+1/2})} \\&\qquad\qquad\qquad\qquad\qquad\qquad\qquad\times\big(
				(\mathcal{U}^{k}_{\Delta}(t,x_{i+1/2})- \mathcal{U}^{k}_{\Delta}(t^n,x_{i+1/2})) -\\&\qquad\qquad\qquad\quad\qquad\qquad\qquad\qquad-( \mathcal{U}^{k}_{\Delta}(t,x_{i-1/2})-\mathcal{U}^{k}_{\Delta}(t^n,x_{i-1/2}))\big) \d s \d t \d y \Big|.
			\end{align*}
			Applying summation by parts in $i$ we get
			\begin{align*}
				\abs{\mathcal{E}_{32} }
				&\le\Big|{\int_{Q_T}\sum_{i\in \Z}\sum\limits_{n=0}^{N_T-1}  } \left(
				\mathcal{U}^{k}_{\Delta}(t^n,x_{i+1/2})-\nu^k({\boldsymbol{c}}_{i+1/2}^{k,n})\right) \\&\qquad\qquad\quad\qquad\times\int_{C^n}f^k(\overline{U}^k(s,y)) (\Phi(s,y,t,x_{i+1/2})-\Phi(s,y,t,x_{i-1/2})) \d s \d t \d y  \Big|
				\\
				&\qquad  +\Big|{\int_{Q_T}\sum_{i\in \Z}\sum\limits_{n=0}^{N_T-1}  } \int_{C^n} f^k(\overline{U}^k(s,y))\big(
				\mathcal{U}^{k}_{\Delta}(t,x_{i+1/2})- \mathcal{U}^{k}_{\Delta}(t^n,x_{i+1/2})\big) \\
				& \qquad\qquad\qquad\qquad \quad \times(\Phi(s,y,t,x_{i+1/2})-\Phi(s,y,t,x_{i-1/2})\d s \d t \d y\big|\\
				&:= \mathcal{E}_{321}+ \mathcal{E}_{322}.
			\end{align*}
			Now,  using \eqref{est:Uminusc}, we have
			\begin{align*}
				\mathcal{E}_{321}
				&\leq\int_{Q_T} \abs{f^k(\overline{U}^k(s,y)) } \abs{\nu^k}_{\lipR}\norma{U^{k}_{\Delta}}_{L^1(Q_T)} \norma{\boldsymbol{\mu}'}_{(L^{\infty}(\R))^{N^2}}\Delta x|\omega_{\epsilon}|_{BV(\R)}\\
				& \qquad  \times \int_{0}^T|\omega_{\epsilon_0}(s-t)| \d s \d t \d y
				\\
				&\le {\int_{Q_T} \abs{f^k(\overline{U}^k(s,y)) } \abs{\nu^k}_{\lipR}\norma{U^{k}_{\Delta}}_{L^1(Q_T)} \norma{\boldsymbol{\mu}'}_{(L^{\infty}(\R))^{N^2}} \frac{\Delta x}{\epsilon} \d s  \d y}\\
				&= {\abs{\nu^k}_{\lipR}\norma{U^{k}_{\Delta}}_{L^1(Q_T)} \norma{\boldsymbol{\mu}'}_{(L^{\infty}(\R))^{N^2}}\norma{f^k(\overline{U}^k)}_{L^1(Q_T)} \frac{\Delta x}{\epsilon}}.
			\end{align*}
			Using \eqref{apx:time}, for $t\in C^n,$ we have,
			\begin{align*}
				&\abs{ \mathcal{U}^{k}_{\Delta}(t,x_{i+1/2})- \mathcal{U}^{k}_{\Delta}(t^n,x_{i+1/2})} \\
				&\quad\leq \abs{\nu^k}_{\lipR}\norma{\boldsymbol{\mu}}_{(L^{\infty}(\R))^{N^2}} \norma{\boldsymbol{U}_{\D}(t,\dott)-\boldsymbol{U}_{\D}(t^n,\dott)}_{(L^1(\R))^N}\\
				&\quad\le \abs{\nu^k}_{\lipR}\norma{\boldsymbol{\mu}}_{(L^{\infty}(\R))^{N^2}} \mathcal{K}_6\Delta t.
			\end{align*}
			
			Consequently,
			\begin{align*}
				\mathcal{E}_{322}
				&\leq\int_{Q_T} \abs{f^k(\overline{U}^k(s,y)) } \abs{\nu^k}_{\lipR}\norma{\boldsymbol{\mu}}_{(L^{\infty}(\R))^{N^2}} \mathcal{K}_6\Delta t|\omega_{\epsilon}|_{BV(\R)}\\&\qquad \times\int_{0}^T|\omega_{\epsilon_0}(s-t)| \d s \d t \d y
				\\
				&\le{\int_{Q_T} \abs{f^k(\overline{U}^k(s,y)) } \abs{\nu^k}_{\lipR}\norma{\boldsymbol{\mu}}_{(L^{\infty}(\R))^{N^2}} \mathcal{K}_6\Delta t \frac{1}{\epsilon} \d s  \d y}\\
				&= \abs{\nu^k}_{\lipR}\norma{\boldsymbol{\mu}}_{(L^{\infty}(\R))^{N^2}} \mathcal{K}_6 \norma{f^k(\overline{U}^k)}_{L^1(Q_T)}\frac{\Delta t}{\epsilon}.\end{align*}
			Finally, combining the above estimates, we get:
			\begin{equation*}
				\lambda_3= \lambda_3' +\mathcal{O}\left(\frac{\D x}{\epsilon}\right).\end{equation*}
			Thus, so far we have proved
			\begin{equation}
				-\Lambda^k_{\Delta,\epsilon,\epsilon_0}(U^{k}_{\Delta},U^k)=\lambda_1+\lambda_2'+\lambda_3'+\mathcal{O}\left(\frac{\D x}{\epsilon}\right).
			\end{equation}
			Now, we estimate $\lambda_1+\lambda_2'+\lambda_3'$ as follows
			\begin{align*}
				\lambda_1+&\lambda_2'+\lambda_3'\nonumber\\
				&=\quad\int_{Q_T}\sum_{i\in \Z}\sum\limits_{n=0}^{N_T-1}  \left(\eta_i^{k,n+1}(\overline{U}^k(s,y))-\eta_i^{k,n}(\overline{U}^k(s,y))\right)\\&\qquad\qquad\qquad\qquad\times\left(\int_{C_i}\Phi(s,y,t^{n+1},x)  \d x\right)\d s  \d y \nonumber\\
				& \quad\nonumber -\int_{Q_T}\sum_{i\in \Z}\sum\limits_{n=0}^{N_T-1}  \int_{C_i^n}p_i^{k,n}(\overline{U}^k(s,y))\nu^k({\boldsymbol{c}}_{i+1/2}^{k,n})\Phi_x(s,y,t,x)  \d t\d x\d s  \d y\\
				&\quad\nonumber
				+\int_{Q_T}\sum_{i\in \Z}\sum\limits_{n=0}^{N_T-1}  \sgn\left(U_i^{k,n}-\frac{\overline{U}^k(s,y)}{ \sigma^k_i}\right) (\nu^k({\boldsymbol{c}}_{i+1/2}^{k,n})-\nu^k({\boldsymbol{c}}_{i-1/2}^{k,n}))\\
				&\nonumber\qquad\qquad\qquad  \qquad\times f^k(\overline{U}^k(s,y))\left(\int_{C^{n}} \Phi(s,y,t,x_{i+1/2})  \d t\right) \d s \d y .
			\end{align*}
			Using the discrete entropy inequality, cf.~Theorem~\ref{Existence} (2), 
			we get
			\begin{align*}
				&\lambda_1+\lambda_2'+\lambda_3'\\ &\hspace{-.1cm}\le-\lambda \int_{Q_T}\sum_{i\in \Z}\sum\limits_{n=0}^{N_T-1} 
				\big(G^{k,n}_{i+1/2}(U_i^{k,n} ,U_{i+1}^{k,n},\overline{U}^k(s,y))-G^{k,n}_{i-1/2}(U_{i-1}^{k,n} ,U_{i}^{k,n},\overline{U}^k(s,y))\big)\\
				&\qquad\qquad\qquad   \qquad\times\left(\int_{C_i}\Phi(s,y,t^{n+1},x)  \d x\right)\d s  \d y\\
				&\quad\hspace{-.1cm}-\lambda \int_{Q_T}\sum_{i\in \Z}\sum\limits_{n=0}^{N_T-1}\sgn\left(U_i^{k,n+1}-\frac{\overline{U}^k(s,y)}{ \sigma^k_i}\right)(\nu^k({\boldsymbol{c}}_{i+1/2}^{k,n})-\nu^k({\boldsymbol{c}}_{i-1/2}^{k,n}))\\
				&\qquad\qquad\qquad\qquad\times f^k(\overline{U}^k(s,y)) \left(\int_{C_i}\Phi(s,y,t^{n+1},x)  \d x\right)\d s  \d y\\
				& \quad\hspace{-.1cm}\nonumber -\int_{Q_T}\sum_{i\in \Z}\sum\limits_{n=0}^{N_T-1}  \int_{C_i^n}p_i^{k,n}(\overline{U}^k(s,y))\nu^k({\boldsymbol{c}}_{i+1/2}^{k,n})\Phi_x(s,y,t,x)  \d t\d x\d s  \d y\\
				&\quad\nonumber\hspace{-.1cm}
				+\int_{Q_T}\sum_{i\in \Z}\sum\limits_{n=0}^{N_T-1}  \sgn\left(U_i^{k,n}-\frac{\overline{U}^k(s,y)}{ \sigma^k_i}\right)(\nu^k({\boldsymbol{c}}_{i+1/2}^{k,n})-\nu^k({\boldsymbol{c}}_{i-1/2}^{k,n})) \\
				&\qquad\qquad\qquad\qquad\times f^k(\overline{U}^k(s,y))\left(\int_{C^{n}} \Phi(s,y,t,x_{i+1/2})  \d t\right) \d s \d y. \\
				&\hspace{-.1cm}=-\lambda \int_{Q_T}\sum_{i\in \Z}\sum\limits_{n=0}^{N_T-1}
				\big(G^{k,n}_{i+1/2}(U_i^{k,n} ,U_{i+1}^{k,n},\overline{U}^k(s,y))-G^{k,n}_{i-1/2}(U_{i-1}^{k,n} ,U_{i}^{k,n},\overline{U}^k(s,y))\big) \\
				&\qquad\qquad\qquad   \qquad\times\left(\int_{C_i}\Phi(s,y,t^{n+1},x)  \d x\right)\d s  \d y\\
				&\quad\hspace{-.1cm}-\lambda \int_{Q_T}\sum_{i\in \Z}\sum\limits_{n=0}^{N_T-1}\sgn\left(U_i^{k,n+1}-\frac{\overline{U}^k(s,y)}{ \sigma^k_i}\right)(\nu^k({\boldsymbol{c}}_{i+1/2}^{k,n})-\nu^k({\boldsymbol{c}}_{i-1/2}^{k,n})) 
				\\
				&\qquad\qquad\qquad\qquad  \times f^k(\overline{U}^k(s,y))\left(\int_{C_i}\Phi(s,y,t^{n+1},x)  \d x\right)\d s  \d y\\
				& \quad\hspace{-.1cm}\nonumber +\int_{Q_T}\sum_{i\in \Z}\sum\limits_{n=0}^{N_T-1}    \int_{C^{n}} \left(p_i^{k,n}(\overline{U}^k(s,y))\nu^k({\boldsymbol{c}}_{i+1/2}^{k,n})- p_{i-1}^{k,n}(\overline{U}^k(s,y))\nu^k({\boldsymbol{c}}_{i-1/2}^{k,n}) \right)\\
				&\qquad\qquad\qquad   \qquad\times \left(\int_{C^{n}} \Phi(s,y,t,x_{i+1/2})  \d t\right)\d s  \d y \\
				&\nonumber\quad\hspace{-.1cm}
				+\int_{Q_T}\sum_{i\in \Z}\sum\limits_{n=0}^{N_T-1}  \sgn\left(U_i^{k,n}-\frac{\overline{U}^k(s,y)}{ \sigma^k_i}\right)(\nu^k({\boldsymbol{c}}_{i+1/2}^{k,n})-\nu^k({\boldsymbol{c}}_{i-1/2}^{k,n})) \\
				&\qquad\qquad\qquad   \qquad\times \left(\int_{C^{n}} \Phi(s,y,t,x_{i+1/2})  \d t \right)\d s \d y \\
				&\hspace{-.1cm}:=A_2+A_3+\lambda_2'+\lambda_3',
			\end{align*}
			by using the fundamental theorem of calculus followed by summation by parts in $\lambda_2'$.
			
			\textbf{Claim 1.} $A_2+\lambda_2' =\mathcal{O}\left(\frac{\D x}{\epsilon}+\frac{\D t}{\epsilon_0}\right).$\\
			Adding and subtracting \begin{align*}&\lambda\int_{Q_T}\sum_{i\in \Z}\sum\limits_{n=0}^{N_T-1}(p_i^{k,n}(\overline{U}^k(s,y))\nu^k({\boldsymbol{c}}_{i+1/2}^{k,n})-p_{i-1}^{k,n}(\overline{U}^k(s,y))\nu^k({\boldsymbol{c}}_{i-1/2}^{k,n}))\\
				&\qquad\qquad\qquad \times \left(\int_{C_i}\Phi(s,y,t^{n+1},x)\d x\right)\d s \d y,
			\end{align*}  
			we have 
			\begin{align*}
				&A_2+\lambda'_2\\
				&\quad=-\lambda\int_{Q_T}\big(G^{k,n}_{i+1/2}(U_i^{k,n} ,U_{i+1}^{k,n},\overline{U}^k(s,y))-G^{k,n}_{i-1/2}(U_{i-1}^{k,n} ,U_{i}^{k,n},\overline{U}^k(s,y))\big)\\
				&\qquad\qquad\quad\times \left(\int_{C_i}\Phi(s,y,t^{n+1},x)\d x\right)\d s \d y\nonumber\\
				&\qquad+\int_{Q_T}\sum_{i\in \Z}\sum\limits_{n=0}^{N_T-1}(p_i^{k,n}(\overline{U}^k(s,y))\nu^k({\boldsymbol{c}}_{i+1/2}^{k,n})-p_{i-1}^{k,n}(\overline{U}^k(s,y))\nu^k({\boldsymbol{c}}_{i-1/2}^{k,n}))\nonumber\\
				&\qquad\qquad\qquad\qquad\qquad\times \left(\int_{C^{n}} \Phi(s,y,t,x_{i+1/2})\d t-\lambda\int_{C_i}\Phi(s,y,t^{n+1},x)\d x\right)\d s \d y\nonumber\\
				&\qquad+\lambda\int_{Q_T}\sum_{i\in \Z}\sum\limits_{n=0}^{N_T-1}(p_i^{k,n}(\overline{U}^k(s,y))\nu^k({\boldsymbol{c}}_{i+1/2}^{k,n})-p_{i-1}^{k,n}(\overline{U}^k(s,y))\nu^k({\boldsymbol{c}}_{i-1/2}^{k,n}))\\
				&\qquad\qquad\qquad\qquad\quad\qquad\times  \left(\int_{C_i}\Phi(s,y,t^{n+1},x)\d x\right)\d s \d y\nonumber\\
				&\quad=\lambda\int_{Q_T}\sum_{i\in \Z}\sum\limits_{n=0}^{N_T-1}(p_i^{k,n}(\overline{U}^k(s,y))\nu^k({\boldsymbol{c}}_{i+1/2}^{k,n})-G^{k,n}_{i+1/2}(U_i^{k,n} ,U_{i+1}^{k,n},\overline{U}^k(s,y)))\\
				&\qquad\qquad\qquad\quad\qquad\times \left(\int_{C_i}\Phi(s,y,t^{n+1},x)\d x\right)\d s \d y\nonumber\\
				&\qquad-\lambda\int_{Q_T}\sum_{i\in \Z}\sum\limits_{n=0}^{N_T-1}(p_{i-1}^{k,n}(\overline{U}^k(s,y))\nu^k({\boldsymbol{c}}_{i-1/2}^{k,n})-G^{k,n}_{i-1/2}(U_{i-1}^{k,n} ,U_{i}^{k,n},\overline{U}^k(s,y)))\\
				&\qquad\qquad\qquad\qquad\qquad\times \left(\int_{C_i}\Phi(s,y,t^{n+1},x)\d x\right)\d s \d y\nonumber\\
				&\qquad+\int_{Q_T}\sum_{i\in \Z}\sum\limits_{n=0}^{N_T-1}(p_i^{k,n}(\overline{U}^k(s,y))\nu^k({\boldsymbol{c}}_{i+1/2}^{k,n})-p_{i-1}^{k,n}(\overline{U}^k(s,y))\nu^k({\boldsymbol{c}}_{i-1/2}^{k,n}))\nonumber\\
				&\qquad\qquad\qquad\qquad\qquad\times \left(\int_{C^{n}} \Phi(s,y,t,x_{i+1/2})\d t-\lambda\int_{C_i}\Phi(s,y,t^{n+1},x)\d x\right)\d s \d y.
			\end{align*}
			Applying summation by parts we get
			\begin{align}
				A_2+\lambda'_2&=\lambda\int_{Q_T}\sum_{i\in \Z}\sum\limits_{n=0}^{N_T-1}(G^{k,n}_{i+1/2}(U_i^{k,n} ,U_{i+1}^{k,n},\overline{U}^k(s,y))-p_i^{k,n}(\overline{U}^k(s,y))\nu^k({\boldsymbol{c}}_{i+1/2}^{k,n}))\nonumber\\
				&\quad\qquad\times\left(\int_{C_{i+1}}\Phi(s,y,t^{n+1},x)\d x-\int_{C_i}\Phi(s,y,t^{n+1},x)\d x\right)\d s \d y\nonumber\\
				&\quad+\int_{Q_T}\sum_{i\in \Z}\sum\limits_{n=0}^{N_T-1}(p_i^{k,n}(\overline{U}^k(s,y))\nu^k({\boldsymbol{c}}_{i+1/2}^{k,n})-p_{i-1}^{k,n}(\overline{U}^k(s,y))\nu^k({\boldsymbol{c}}_{i-1/2}^{k,n}))\nonumber\\
				&\qquad\qquad\times\left(\int_{C^{n}} \Phi(s,y,t,x_{i+1/2})\d t-\lambda\int_{C_i}\Phi(s,y,t^{n+1},x)\d x\right)\d s \d y,\nonumber
			\end{align}
			where 
			\begin{align}
				\abs{G^{k,n}_{i+1/2}(U_i^{k,n} ,U_{i+1}^{k,n},\overline{U}^k(s,y))-\nu^k({\boldsymbol{c}}_{i+1/2}^{k,n}) p_i^{k,n}(\overline{U}^k(s,y))  }  &\leq \mathcal{M}_5   \abs{\Delta_{+}U_i^{k,n}}, \label{est:1} 
			\end{align}
			and 
			\begin{align}\label{est:2}
				\begin{split}
					\abs{\Delta_{-}(\nu^k({\boldsymbol{c}}_{i+1/2}^{k,n}) p_i^{k,n}(\overline{U}^k(s,y)))}  &\leq \mathcal{M}_6 \big[\abs{\Delta_{+}U_i^{k,n}}+\abs{\Delta_{-}(\nu^k({\boldsymbol{c}}_{i+1/2}^{k,n}))}  \big]. 
				\end{split}
			\end{align}
			Using \eqref{est:1}--\eqref{est:2}, we have,
			\begin{align*}
				A_2+\lambda_2'
				&\leq \mathcal{M}_5 \lambda\int_{Q_T}\sum_{i\in \Z}\sum\limits_{n=0}^{N_T-1} \abs{U_{i+1}^{k,n}-U_i^{k,n}}\\
				&\qquad\qquad\times \left(\int_{C_{i+1}}\Phi(s,y,t^{n+1},x)\d x-\int_{C_i}\Phi(s,y,t^{n+1},x)\d x\right)\d s \d y\nonumber\\
				&\quad+\mathcal{M}_6 \int_{Q_T}\sum_{i\in \Z}\sum\limits_{n=0}^{N_T-1}\big[\abs{\Delta_{+}U_i^{k,n}}+\abs{\Delta_{-}(\nu^k({\boldsymbol{c}}_{i+1/2}^{k,n}))}  \big]\nonumber\\
				&\qquad\qquad\times\left(\int_{C^{n}} \Phi(s,y,t,x_{i+1/2})\d t-\lambda\int_{C_i}\Phi(s,y,t^{n+1},x)\d x\right)\d s \d y. \nonumber
			\end{align*}
			Now as $\boldsymbol{\mu}$ is a smooth function with compact support, the claim follows because of \eqref{est:3}--\eqref{est:4} and the following estimates, 
			\begin{align}  &\nonumber\sum_{i}\Big|\nu^k({\boldsymbol{c}}_{i+1/2}^{k,n})-\nu^k({\boldsymbol{c}}_{i-1/2}^{k,n})\Big|\\
				&\quad\le\abs{\nu^k}_{\lipR}\sum_{j=1}^N\sum_{i,p} \Delta x\Big|(\boldsymbol{\mu}^{j,k}_{i+1/2-p}-\boldsymbol{\mu}^{j,k}_{i-1/2-p})U^{j,n}_{p+1/2}\Big|\nonumber\\
				&\quad\le \abs{\nu^k}_{\lipR}\sum_{j=1}^N\sum_{i,p} \Delta x\abs{U^{j,n}_{p+1/2}}\int_{C_{i-p}}\abs{{\boldsymbol{\mu}^{j,k}}^{'}(x)}\d x\nonumber\\
				&\quad=\abs{\nu^k}_{\lipR}\sum_{j=1}^N\sum_{p}\Delta x\abs{U^{j,n}_{p+1/2}}\int_{\R}\abs{{\boldsymbol{\mu}^{j,k}}^{'}(x)}\d x\nonumber\\
				&\quad\leq  \abs{\nu^k}_{\lipR}\norma{\boldsymbol{U}_{0}}_{(L^1(\R))^N}\abs{\boldsymbol{\mu}}_{(BV(\R))^{N^2}}:=\mathcal{M}_7\label{cc},
			\end{align}
			which is true because of \eqref{apx:L1}.\\
			\textbf{Claim 2.} $A_3+\lambda_3' =\mathcal{O}\left(\frac{\D x}{\epsilon}\right)$.\\
			Observe that, 
			\begin{align*}
				&A_3+\lambda_3'\\
				&\quad= -\lambda \int_{Q_T}\sum_{i\in \Z}\sum\limits_{n=0}^{N_T-1}\sgn\left(U_i^{k,n+1}-\frac{\overline{U}^k(s,y)}{ \sigma^k_i}\right)(\nu^k({\boldsymbol{c}}_{i+1/2}^{k,n})-\nu^k({\boldsymbol{c}}_{i-1/2}^{k,n}))\\
				&\qquad\qquad\qquad\qquad \qquad\times f^k(\overline{U}^k(s,y))\left(\int_{C_i}\Phi(s,y,t^{n+1},x)  \d x\right)\d s  \d y\\
				&\qquad+\int_{Q_T}\sum_{i\in \Z}\sum\limits_{n=0}^{N_T-1}  \sgn\left(U_i^{k,n}-\frac{\overline{U}^k(s,y)}{ \sigma^k_i}\right)(\nu^k({\boldsymbol{c}}_{i+1/2}^{k,n})-\nu^k({\boldsymbol{c}}_{i-1/2}^{k,n}))\\
				&\qquad\qquad\qquad\qquad \quad\times f^k(\overline{U}^k(s,y)) \left(\int_{C^{n}} \Phi(s,y,t,x_{i+1/2})  \d t\right) \d s \d y\\
				&\quad = -\lambda \int_{Q_T}\sum_{i\in \Z}\sum\limits_{n=0}^{N_T-1}\sgn\left(U_i^{k,n+1}-\frac{\overline{U}^k(s,y)}{ \sigma^k_i}\right)(\nu^k({\boldsymbol{c}}_{i+1/2}^{k,n})-\nu^k({\boldsymbol{c}}_{i-1/2}^{k,n}))\\
				&\qquad\qquad\qquad  \qquad \qquad\times f^k(\overline{U}^k(s,y)) \left(\int_{C_i}\Phi(s,y,t^{n+1},x)  \d x\right)\d s  \d y\\
				&\qquad+\lambda \int_{Q_T}\sum_{i\in \Z}\sum\limits_{n=0}^{N_T-1}\sgn\left(U_i^{k,n}-\frac{\overline{U}^k(s,y)}{ \sigma^k_i}\right)(\nu^k({\boldsymbol{c}}_{i+1/2}^{k,n})-\nu^k({\boldsymbol{c}}_{i-1/2}^{k,n})) 
				\\
				&\qquad\qquad\qquad   \qquad \qquad\times f^k(\overline{U}^k(s,y))
				\int_{C_i}\Phi(s,y,t^{n+1},x)  \d x\d s  \d y\\
				&\qquad-\lambda \int_{Q_T}\sum_{i\in \Z}\sum\limits_{n=0}^{N_T-1}\sgn\left(U_i^{k,n}-\frac{\overline{U}^k(s,y)}{ \sigma^k_i}\right)(\nu^k({\boldsymbol{c}}_{i+1/2}^{k,n})-\nu^k({\boldsymbol{c}}_{i-1/2}^{k,n}))\\
				&\qquad\qquad\qquad   \qquad \qquad\times f^k(\overline{U}^k(s,y)) \int_{C_i}\Phi(s,y,t^{n+1},x)  \d x\d s  \d y\\
				&\qquad \quad+\int_{Q_T}\sum_{i\in \Z}\sum\limits_{n=0}^{N_T-1}  \sgn\left(U_i^{k,n}-\frac{\overline{U}^k(s,y)}{ \sigma^k_i}\right)(\nu^k({\boldsymbol{c}}_{i+1/2}^{k,n})-\nu^k({\boldsymbol{c}}_{i-1/2}^{k,n})) \\
				&\qquad\qquad\qquad   \qquad \qquad\times f^k(\overline{U}^k(s,y))\int_{C^{n}} \Phi(s,y,t,x_{i+1/2})  \d s \d t \d y\\
				& \quad=
				\int_{Q_T} \sum_{i\in \Z}\sum\limits_{n=0}^{N_T-1}\sgn\left(U_i^{k,n}-\frac{\overline{U}^k(s,y)}{ \sigma^k_i}\right)(\nu^k({\boldsymbol{c}}_{i+1/2}^{k,n})-\nu^k({\boldsymbol{c}}_{i-1/2}^{k,n}))f^k(\overline{U}^k(s,y)) \\
				&\qquad\qquad\qquad\qquad\times \left(\int_{C^{n}} \Phi(s,y,t,x_{i+1/2})\d t    -\lambda\int_{C_i}\Phi(s,y,t^{n+1},x) \d x \right) \d s \d y\\
				&\qquad-\lambda \int_{Q_T} \sum_{i\in \Z}\sum\limits_{n=0}^{N_T-1}(\nu^k({\boldsymbol{c}}_{i+1/2}^{k,n})-\nu^k({\boldsymbol{c}}_{i-1/2}^{k,n}))f^k(\overline{U}^k(s,y))\\
				&\qquad\qquad\qquad   \qquad \qquad\times \left(\sgn\left(U_i^{k,n+1}-\frac{\overline{U}^k(s,y)}{ \sigma^k_i}\right)-\sgn\left(U_i^{k,n}-\frac{\overline{U}^k(s,y)}{ \sigma^k_i}\right)\right)\\
				&\qquad\qquad\qquad   \qquad \qquad\times 
				\left(\int_{C_i}\Phi(s,y,t^{n+1},x)  \d x \right) \d s \d y\\
				&\quad:=\tilde{\mathcal{E}}_1+\tilde{\mathcal{E}}_2.
			\end{align*}
			Now, using \eqref{cc} and \eqref{est:3},
			\begin{align*}
				&\tilde{\mathcal{E}}_1
				\le T\norma{f^k(\overline{U}^k)}_{L^1(Q_T)}\mathcal{M}_7 \left(\frac{\Delta x}{\epsilon}+\frac{\Delta t}{\epsilon_0}\right).
			\end{align*}
			Furthermore,
			\begin{align*}
				\tilde{\mathcal{E}}_2
				&=-\lambda \int_{Q_T}\sum_{i}\sum_{n=1}^{N_T}(\nu^k({\boldsymbol{c}}_{i+1/2}^{k,n-1})-\nu^k({\boldsymbol{c}}_{i-1/2}^{k,n-1}))\sgn\left(U_i^{k,n}-\frac{\overline{U}^k(s,y)}{ \sigma^k_i}\right) \\
				&\qquad\qquad\qquad \qquad\times  f^k(\overline{U}^k(s,y))
				\left(\int_{C_i}\Phi(s,y,t^{n},x)  \d x\right)\d s  \d y\\
				&\quad+\lambda \int_{Q_T}\sum_{i}\sum_{n=1}^{N_T}\sgn\left(U_i^{k,n}-\frac{\overline{U}^k(s,y)}{ \sigma^k_i}\right)(\nu^k({\boldsymbol{c}}_{i+1/2}^{k,n})-\nu^k({\boldsymbol{c}}_{i-1/2}^{k,n})) \\
				&\qquad\qquad\qquad \qquad\times f^k(\overline{U}^k(s,y)) 
				\int_{C_i}\Phi(s,y,t^{n+1},x)  \d x\d s  \d y\\
				&\quad +\lambda \int_{Q_T}\sum_{i}(\nu^k({\boldsymbol{c}}_{i+1/2}^{k,0})-\nu^k({\boldsymbol{c}}_{i-1/2}^{k,0}))\sgn\left(U_i^{k,0}-\frac{\overline{U}^k(s,y)}{ \sigma^k_i}\right)
				\\
				&\qquad\qquad \qquad\times  f^k(\overline{U}^k(s,y)) 
				\left(\int_{C_i}\Phi(s,y,t^1,x)  \d x\right)\d s  \d y\\
				&\quad-\lambda \int_{Q_T}\sum_{i}(\nu^k({\boldsymbol{c}}_{i+1/2}^{k,N_T})-\nu^k({\boldsymbol{c}}_{i-1/2}^{k,N_T}))\sgn\left(U_i^{k,N_T}-\frac{\overline{U}^k(s,y)}{ \sigma^k_i}\right)  \\
				&\qquad\qquad\qquad\times   f^k(\overline{U}^k(s,y))
				\left(\int_{C_i}\Phi(s,y,t^{N_T+1},x)  \d x\right)\d s  \d y.\end{align*}
			Adding and subtracting 
			\begin{align*}
				&-\lambda \int_{Q_T}\sum_{i}\sum_{n=1}^{N_T}(\nu^k({\boldsymbol{c}}_{i+1/2}^{k,n-1})-\nu^k({\boldsymbol{c}}_{i-1/2}^{k,n-1}))\sgn\left(U_i^{k,n}-\frac{\overline{U}^k(s,y)}{ \sigma^k_i}\right) \\
				&\qquad\qquad\qquad\quad\times 
				f^k(\overline{U}^k(s,y))\left(\int_{C_i}\Phi(s,y,t^{n+1},x)  \d x\right)\d s  \d y,
			\end{align*} 
			we have,
			\begin{align*}
				\tilde{\mathcal{E}}_2&=-\lambda \int_{Q_T} \sum_{i}\sum_{n=1}^{N_T}(\nu^k({\boldsymbol{c}}_{i+1/2}^{k,n-1})-\nu^k({\boldsymbol{c}}_{i-1/2}^{k,n-1}))\sgn\left(U_i^{k,n}-\frac{\overline{U}^k(s,y)}{ \sigma^k_i}\right) \\
				&\qquad\qquad\qquad\times f^k(\overline{U}^k(s,y))
				\int_{C_i}\left((\Phi(s,y,t^{n},x)-\Phi(s,y,t^{n+1},x)) \d x\right) \d s \d y\\
				& \quad+\lambda \int_{Q_T} \sum_{i}\sum_{n=1}^{N_T}\sgn\left(U_i^{k,n}-\frac{\overline{U}^k(s,y)}{ \sigma^k_i}\right)(\nu^k({\boldsymbol{c}}_{i+1/2}^{k,n})-\nu^k({\boldsymbol{c}}_{i-1/2}^{k,n}))\\
				&\qquad\qquad\qquad\times f^k(\overline{U}^k(s,y))\left(\int_{C_i}\Phi(s,y,t^{n+1},x) \d x\right)\d s  \d y\\
				& \quad-\lambda \int_{Q_T} \sum_{i}\sum_{n=1}^{N_T}(\nu^k({\boldsymbol{c}}_{i+1/2}^{k,n-1})-\nu^k({\boldsymbol{c}}_{i-1/2}^{k,n-1}))\sgn\left(U_i^{k,n}-\frac{\overline{U}^k(s,y)}{ \sigma^k_i}\right)\\
				&\qquad\qquad\qquad\times f^k(\overline{U}^k(s,y))\left(\int_{C_i}\Phi(s,y,t^{n+1},x) \d x\right)\d s  \d y\\
				&\quad+\lambda \int_{Q_T}  \sum_{i}(\nu^k({\boldsymbol{c}}_{i+1/2}^{k,0})-\nu^k({\boldsymbol{c}}_{i-1/2}^{k,0}))\left(\sgn(U_i^{k,0}-\frac{\overline{U}^k(s,y)}{ \sigma^k_i}\right) 
				\\
				&\qquad\qquad\quad\times f^k(\overline{U}^k(s,y)) \left(\int_{C_i}\Phi(s,y,t^1,x)  \d x \right)\d s \d y\\
				&\quad-\lambda \int_{Q_T} \sum_{i}(\nu^k({\boldsymbol{c}}_{i+1/2}^{k,N_T})-\nu^k({\boldsymbol{c}}_{i-1/2}^{k,N_T}))\sgn\left(U_i^{k,N_T}-\frac{\overline{U}^k(s,y)}{ \sigma^k_i}\right) 
				\\
				&\qquad\qquad\quad \times f^k(\overline{U}^k(s,y))\left(\int_{C_i}\Phi(s,y,t^{N_T+1},x) \d x \right) \d s \d y\\
				&\quad:=\tilde{\mathcal{E}}_{21}+\tilde{\mathcal{E}}_{22}+\tilde{\mathcal{E}}_{23}+\tilde{\mathcal{E}}_{24}+\tilde{\mathcal{E}}_{25}.
			\end{align*}
			Now, let us estimate $\tilde{\mathcal{E}}_{21}$:
			\begin{align*}
				\tilde{\mathcal{E}}_{21}
				& = -\lambda \int_{Q_T} \Bigg(\sum_{i}\sum_{n=1}^{N_T}(\nu^k({\boldsymbol{c}}_{i+1/2}^{k,n-1})-\nu^k({\boldsymbol{c}}_{i-1/2}^{k,n-1}))\sgn\left(U_i^{k,n}-\frac{\overline{U}^k(s,y)}{ \sigma^k_i}\right) \\
				&\qquad\qquad\times f^k(\overline{U}^k(s,y))
				\left(\int_{C_i}(\Phi(s,y,t^{n},x)-\Phi(s,y,t^{n+1},x)) \d x \right)\Bigg) \d s  \d y\\
				&\leq \D x \lambda \norma{f^k(\overline{U}^k)}_{L^{\infty} (Q_T)} \int_{0}^T \sum_{i}\sum_{n=1}^{N_T} 
				\abs{\nu^k({\boldsymbol{c}}_{i+1/2}^{k,n-1})-\nu^k({\boldsymbol{c}}_{i-1/2}^{k,n-1})}\\
				&\qquad\qquad\qquad \qquad\qquad\qquad\qquad\qquad\times
				\abs{\omega_{\epsilon_0}(s-t^n)-\omega_{\epsilon_0}(s-t^n-\Delta t)} \d s \\
				&\le \D t\mathcal{M}_7  \norma{f^k(\overline{U}^k)}_{L^{\infty}(Q_T)} \sum_{n=1}^{N_T} \int_{0}^T 
				\abs{\omega_{\epsilon_0}(s-t^n)-\omega_{\epsilon_0}(s-t^n-\Delta t)} \d s \\
				&\le \mathcal{M}_7\Delta t\norma{f^k(\overline{U}^k)}_{L^{\infty}(Q_T)} \sum_{n=1}^{N_T}  \abs{\omega_{\epsilon_0}}_{BV(\R)}\Delta t
				:=\mathcal{M}_8\frac{\Delta t}{\epsilon_0}.
			\end{align*}
			Next, we consider $\tilde{\mathcal{E}}_{22}+\tilde{\mathcal{E}}_{23}$:
			\begin{align*}
				\tilde{\mathcal{E}}_{22}+\tilde{\mathcal{E}}_{23}
				&=\lambda \int_{Q_T} \sum_{i}\sum_{n=1}^{N_T}\sgn\left(U_i^{k,n}-\frac{\overline{U}^k(s,y)}{ \sigma^k_i}\right)(\nu^k({\boldsymbol{c}}_{i+1/2}^{k,n})-\nu^k({\boldsymbol{c}}_{i-1/2}^{k,n}))\\
				&\qquad\qquad\qquad\quad\times f^k(\overline{U}^k(s,y))\left(\int_{C_i}\Phi(s,y,t^{n+1},x) \d x\right)\d s  \d y\\
				&\quad-\lambda \int_{Q_T} \sum_{i}\sum_{n=1}^{N_T}(\nu^k({\boldsymbol{c}}_{i+1/2}^{k,n-1})-\nu^k({\boldsymbol{c}}_{i-1/2}^{k,n-1}))\sgn\left(U_i^{k,n}-\frac{\overline{U}^k(s,y)}{ \sigma^k_i}\right)\\
				&\qquad\qquad\qquad\qquad\times f^k(\overline{U}^k(s,y))\left(\int_{C_i}\Phi(s,y,t^{n+1},x) \d x\right)\d s  \d y\\
				&\le\lambda \abs{\nu^k}_{\lipR}\norma{f^k(\overline{U}^k)}_{L^{\infty}(Q_T)} \\
				&\qquad\times \int_{Q_T} \Bigg(\sum_{i}\sum_{n=1}^{N_T}\Big|{\boldsymbol{c}}_{i+1/2}^{k,n}-{\boldsymbol{c}}_{i-1/2}^{k,n}-{\boldsymbol{c}}_{i+1/2}^{k,n-1}+{\boldsymbol{c}}_{i-1/2}^{k,n-1}\Big| \\
				&\qquad\qquad\qquad\qquad\quad\times \int_{C_i}|\Phi(s,y,t^{n+1},x)| \d x \Bigg)\d s \d y\\
				&\quad=\Delta t\abs{\nu^k}_{\lipR}\norma{f^k(\overline{U}^k)}_{L^{\infty}(Q_T)}  \\
				&\qquad\quad\times 
				\sum_{i}\sum_{n=1}^{N_T}\Big|{\boldsymbol{c}}_{i+1/2}^{k,n}-{\boldsymbol{c}}_{i-1/2}^{k,n}-{\boldsymbol{c}}_{i+1/2}^{k,n-1}+{\boldsymbol{c}}_{i-1/2}^{k,n-1}\Big|.
			\end{align*}
			Further,
			\begin{align*}  
				&\sum_{i}\Big|(\boldsymbol{c}_{i+1/2}^{k,n}-\boldsymbol{c}_{i-1/2}^{k,n})-(\boldsymbol{c}_{i+1/2}^{k,n-1}-{\boldsymbol{c}}_{i-1/2}^{k,n-1})\Big|\\
				&\qquad\le \Delta x \sum_{i}\Big|\sum_{j=1}^N\sum_{p} (\boldsymbol{\mu}^{j,k}_{i+1/2-p}-\boldsymbol{\mu}^{j,k}_{i-1/2-p})U^{j,n}_{p+1/2}\\
				&\qquad\qquad\qquad\qquad\qquad\qquad-\sum_{j=1}^N\sum_{p} (\boldsymbol{\mu}^{j,k}_{i+1/2-p}-\boldsymbol{\mu}^{j,k}_{i-1/2-p})U^{j,n-1}_{p+1/2}\Big|\\
				&\qquad\le\Delta x{\sum_{j=1}^N\sum_{i,p} \Big|U^{j,n}_{p+1/2}-U^{j,n-1}_{p+1/2}\Big|\Big|\boldsymbol{\mu}^{j,k}_{i+1/2-p}-\boldsymbol{\mu}^{j,k}_{i-1/2-p}\Big|}\\
				&\qquad\le \Delta x|\boldsymbol{\mu}|_{(BV(\R))^{N^2}}\sum_{j=1}^N\sum_{p}\Big|U_{p+1/2}^{j,n}-U_{p+1/2}^{j,n-1}\Big|\\
				&\qquad\le N|\boldsymbol{\mu}|_{(BV(\R))^{N^2}}\mathcal{K}_6\Delta t,
			\end{align*}
			see \eqref{apx:time}.
			Therefore,
			\begin{equation*}
				\tilde{\mathcal{E}}_{22}
				\le \mathcal{M}_8\Delta t. \end{equation*}
			The estimates on the remaining terms $\tilde{\mathcal{E}}_{23}$ and $\tilde{\mathcal{E}}_{24}$, follow easily from \eqref{cc}. Finally, the lemma follows combing all the above assertions.
		\end{proof}
		\begin{theorem}\label{CR}
			Let $\boldsymbol{U}$ be the entropy solution and $\boldsymbol{U}_{\Delta}$ be the numerical approximation obtained via then we have the following convergence rate estimates hold:
			\begin{align}\label{rate1}
				\norma{\boldsymbol{U}-\boldsymbol{U}_{\Delta}}_{(L^1(\R))^N} = \mathcal{O}(\D t)^{1/3}.
			\end{align}
			In particular, if $\TV( s)=0$ then we have,   
			\begin{align}\label{rate2}
				\norma{\boldsymbol{U}-\boldsymbol{U}_{\Delta}}_{(L^1(\R))^N} = \mathcal{O}(\D t)^{1/2}.
			\end{align}
		\end{theorem}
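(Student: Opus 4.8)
The plan is to obtain both convergence rates from a single application of the Kuznetsov-type estimate, Lemma~\ref{lemma:kuz}, with the competitor $\boldsymbol{V}$ taken to be the numerical approximation $\boldsymbol{U}_{\Delta}$ itself. First I would check that $\boldsymbol{U}_{\Delta}$ is an admissible competitor, i.e.~$\boldsymbol{U}_{\Delta}\in K$: being piecewise constant it lies in $(L^{\infty}([0,T];\bv(\R)))^N$ with norms bounded uniformly in $\D x$ by \eqref{apx:Linf}--\eqref{apx:bv} of Theorem~\ref{Existence}, and the positivity \eqref{apx:positivity} together with the conservativity of the marching formula \eqref{scheme} gives $\norma{U^k_{\Delta}(t,\dott)}_{L^1(\R)}=\norma{U^k_{\Delta}(0,\dott)}_{L^1(\R)}$ for every $t\in[0,T]$ and every $k$, cf.~\eqref{apx:L1}. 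Hence Lemma~\ref{lemma:kuz} with $\boldsymbol{V}=\boldsymbol{U}_{\Delta}$ gives, for all $\epsilon,\epsilon_0>0$,
\begin{align*}
	\norma{\boldsymbol{U}(T,\dott)-\boldsymbol{U}_{\Delta}(T,\dott)}_{(L^1(\R))^N}
	&\le \mathcal{C}\sum_{k=1}^N\bigl(\gamma(U^k_{\Delta},\epsilon_0)-\Lambda^k_{\epsilon,\epsilon_0}(U^k_{\Delta},U^k)\bigr)\\
	&\quad+\mathcal{C}\Bigl(\norma{\boldsymbol{U}_0-\boldsymbol{U}_{\Delta}(0,\dott)}_{(L^1(\R))^N}+N(\epsilon+\epsilon_0)+N\frac{\epsilon}{\epsilon_0}\Bigl|\frac{1}{s}\Bigr|_{(\bv(\R))^N}\Bigr).
\end{align*}

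Next I would estimate the various terms on the right-hand side. The coupled nonlocal contributions are exactly what Lemma~\ref{lemma:est} controls: $-\Lambda^k_{\epsilon,\epsilon_0}(U^k_{\Delta},U^k)\le\mathcal{M}\bigl(\D x/\epsilon+\D t/\epsilon_0\bigr)$. For the time-translation modulus I would telescope the one-step estimate \eqref{apx:time} over the at most $\epsilon_0/\D t+1$ time levels separating two times at distance $\le\epsilon_0$, obtaining $\gamma(U^k_{\Delta},\epsilon_0)\le\mathcal{K}_6(\epsilon_0+\D t)$. The initial error is the standard cell-average estimate $\norma{\boldsymbol{U}_0-\boldsymbol{U}_{\Delta}(0,\dott)}_{(L^1(\R))^N}\le\D x\,\TV(\boldsymbol{U}_0)$, and $\bigl|1/s\bigr|_{(\bv(\R))^N}<\infty$ since each $\sigma^k$ lies in $\bv(\R)$ and is bounded away from zero by \ref{H3A}. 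Because $\D x=\D t/\lambda$ with $\lambda$ fixed by the CFL condition, and $\D t\le\epsilon_0$ for $\D t$ small, collecting these bounds gives
\[
	\norma{\boldsymbol{U}(T,\dott)-\boldsymbol{U}_{\Delta}(T,\dott)}_{(L^1(\R))^N}\le C\Bigl(\epsilon+\epsilon_0+\frac{\D t}{\epsilon}+\frac{\D t}{\epsilon_0}+\frac{\epsilon}{\epsilon_0}\Bigr),
\]
with $C$ independent of $\D t$; running the same argument up to an arbitrary $t\le T$ yields this uniformly for $t\in[0,T]$.

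The last step is the optimization of the free parameters. In the general case, setting $\epsilon=\D t^{2/3}$ and $\epsilon_0=\D t^{1/3}$ makes each of the five terms $\mathcal{O}(\D t^{1/3})$, which proves \eqref{rate1}. When $\TV(s)=0$, every $\sigma^k$ is constant, so $\bigl|1/s\bigr|_{(\bv(\R))^N}=0$ and the term $\epsilon/\epsilon_0$ disappears (consistently with $\Lambda^k_{\epsilon,\epsilon_0}=\Lambda^k_{\Delta,\epsilon,\epsilon_0}$ in that case); then the balanced choice $\epsilon=\epsilon_0=\sqrt{\D t}$ makes the remaining four terms $\mathcal{O}(\sqrt{\D t})$, giving \eqref{rate2}. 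A space-time $(L^1((0,T)\times\R))^N$ bound of the same order then follows by integrating in $t$. The substantive input is Lemma~\ref{lemma:est}; within the present proof the only points requiring care are the verification that $\boldsymbol{U}_{\Delta}\in K$ and the correct $\epsilon_0$-dependence of $\gamma(U^k_{\Delta},\epsilon_0)$, and the fact that the exponent $1/3$ rather than $1/2$ is forced precisely by the extra $\epsilon/\epsilon_0$ term produced by the discontinuity of $\boldsymbol{\sigma}$ (through $|1/s|_{(\bv(\R))^N}$ in Lemma~\ref{lemma:kuz}).
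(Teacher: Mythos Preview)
Your proposal is correct and follows essentially the same approach as the paper: apply the Kuznetsov-type Lemma~\ref{lemma:kuz} with $\boldsymbol{V}=\boldsymbol{U}_{\Delta}$, control the relative entropy term by Lemma~\ref{lemma:est}, bound the remaining pieces via the stability estimates of Theorem~\ref{Existence}, and then optimize $\epsilon=(\D t)^{2/3}$, $\epsilon_0=(\D t)^{1/3}$ (respectively $\epsilon=\epsilon_0=\sqrt{\D t}$ when $\TV(s)=0$). The paper's proof is terser, but your additional checks that $\boldsymbol{U}_{\Delta}\in K$ and that $\gamma(U^k_{\Delta},\epsilon_0)\le \mathcal{K}_6(\epsilon_0+\D t)$ via \eqref{apx:time} are exactly the details implicitly used there.
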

		\begin{proof}
			Owing to the CFL condition  (cf.~\eqref{CFL_LF} and \eqref{CFL_God} for the Lax--Friedrichs and Godunov type scheme, respectively), $\D x =\mathcal{O} (\D t).$
			Furthermore, the initial approximation \eqref{initial} implies $\norma{\boldsymbol{U}_0^{\D}-\boldsymbol{U}_0}_{(L^1(\R))^N}=\mathcal{O}(\D t)$.
			Observe that for $\TV( s) \neq 0,$ the Kuznetsov-type estimate, cf.~\eqref{est:kuz}, contains $\epsilon/\epsilon_0.$ Hence we choose $\epsilon= (\D t)^{2/3}$ and $\epsilon_0= (\D t)^{1/3}$ and finally substitute the assertions of Lemma \ref{lemma:est} and Theorem~\ref{Existence} (2)
			in \eqref{est:kuz} to get the convergence rate estimate \eqref{rate1}.
			On the other hand, if $\TV( s)=0,$ then the term $\frac{\epsilon}{\epsilon_0}$ will not appear in the Kuznetov-type estimate, cf.~\eqref{est:kuz}, allowing us to choose  $\epsilon=\epsilon_0=\sqrt{\Delta t}$ to get the better convergence rate estimate \eqref{rate2}.
		\end{proof}
		\begin{remark}\label{multid}\normalfont
			Kuznestov-type lemma \ref{lemma:kuz} remains valid in several space dimensions as well,  however for $\sigma$ discontinuous, a uniform $\bv$ bound on the finite volume approximation $\boldsymbol{U}_{\Delta}$ is not available. On the other hand, for $\sigma\equiv 1,$ where the system in 2 space dimensions, reads as
			, where the $k^{th}$ equation read as,
			\begin{align}
				\label{eq:u1A}
				\partial_t U^{k} +\partial_x \Big(f^k(U^k) \nu^k((\boldsymbol{\mu} * \boldsymbol{U})^k)\Big)+\partial_y \Big(g^k(U^k) \overline{\nu}^k((\boldsymbol{\overline{\mu}} * \boldsymbol{U})^k)\Big)&=0,\,\,k=1,2,\ldots,N,
			\end{align} such bounds on the numerical approximation can be derived, see \cite{ACG2015}, and consequently Lemma~\ref{lemma:est} and Theorem~\ref{CR} can be reproduced using dimension splitting arguments (see  \cite[Sec.~4.3]{HR2015}, \cite[Sec.~5]{HKLR2010} for any space dimension. We illustrate the experimental convergence rates numerically for two space dimensions in the next section.
		\end{remark}
		%-----  section
		\section{Numerical Experiments}
		\label{num}
		We now present some numerical experiments to illustrate the theory presented in the previous sections. We show the results for the Lax--Friedrichs scheme. The results obtained by  the Godunov scheme  are similar, and are not shown here. Throughout the section, $\theta=\theta_x=\theta_y$ is chosen to be $1/3$, and $\lambda$ and $\lambda_x=\lambda_y$ are chosen so as to satisfy the CFL condition \eqref{CFL_LF}.
		\subsection{One dimension}
		We consider the \textit{discontinuous} generalization of the nonlocal Keyfitz--Kranzer system introduced in \cite{ACG2015} in one space dimension, which reads as  
		\begin{equation}
			\label{eq:kk}
			\left\{
			\begin{array}{l}
				\partial_t U^1 + \partial_x \left(\sigma(x) U^1 \,  \nu (\mu*U^1,\mu*U^2)\right) = 0,
				\\
				\partial_t U^2 + \partial_x \left(\sigma(x) U^2 \,  \nu (\mu*U^1,\mu*U^2)\right) = 0.
			\end{array}
			\right.
		\end{equation}
		We choose the kernel
		\begin{align*}
			\mu(x)&=L(-x(0.125+x))^{5/2}\mathbbm{1}_{(-0.125,0)}(x),
		\end{align*}
		where $L$ is such that $\int_{\R}\mu(x)\d{x}=1,$ the velocity
		$\nu(a,b)=(1-a^2-b^2)^3,$ and a $\bv$ function
		\begin{align*}\label{s}
			\quad \sigma(x)= 
			\begin{cases}
				\frac{1}{3}a_1, \quad & x \le a_1,\\
				\frac{1}{3}\sum\limits_{n=2}^\infty a_n \mathbbm{1}_{[a_n,a_{n+1}]}(x), \quad & x \in (a_1,3],\\
				1, \quad & x> 3,\\
			\end{cases} a_n=3(1-0.8^n),
		\end{align*}
		which admits infinitely many discontinuities at the points $\{a_n\}_{n\in \N}.$
		The system~\eqref{eq:kk} fits into the setup of this article (cf.~\eqref{eq:umulA})  with $N=2, \boldsymbol{\mu} =
		\left[\begin{array}{ccc}
			\mu & \mu
			\\
			\mu & \mu
		\end{array}\right],\nu^k=\nu,f^k(u)=u$ and $\sigma^k=\sigma.$\begin{figure}[h!]
			\centering
			\begin{subfigure}{.45\textwidth}
				\includegraphics[width=\textwidth,keepaspectratio]{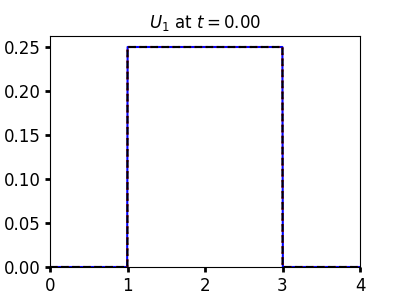}
			\end{subfigure}
			\hfill
			\begin{subfigure}{.45\textwidth}
				\includegraphics[width=\textwidth,keepaspectratio]{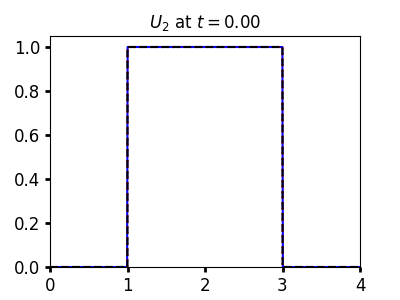}
			\end{subfigure}\hspace{1cm}
			\begin{subfigure}{.45\textwidth}
				\includegraphics[width=\textwidth,keepaspectratio]{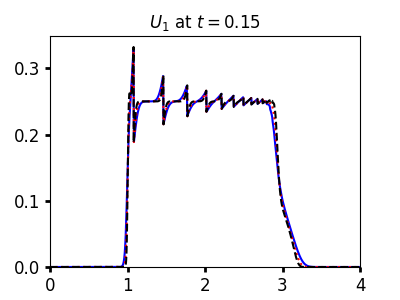}
			\end{subfigure}
			\hfill
			\begin{subfigure}{.45\textwidth}
				\includegraphics[width=\textwidth,keepaspectratio]{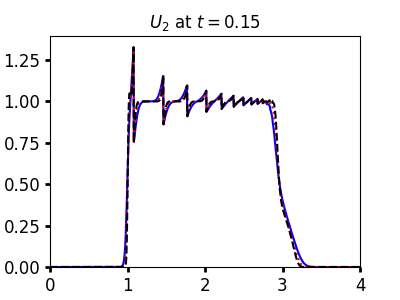}
			\end{subfigure}
			\begin{subfigure}{.45\textwidth}
				\includegraphics[width=\textwidth,keepaspectratio]{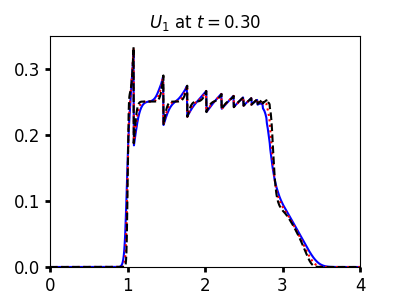}
			\end{subfigure}
			\hfill
			\begin{subfigure}{.45\textwidth}
				\includegraphics[width=\textwidth,keepaspectratio]{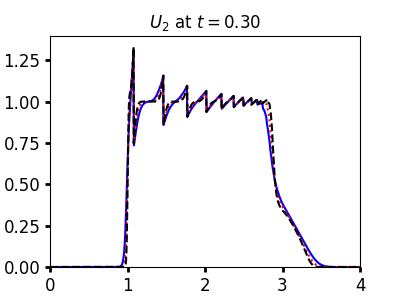}
			\end{subfigure}
			\caption{Solution to ~\eqref{eq:kk} and \eqref{eq:ex1} on the domain $[0, \, 4]$ at times $t =
				0.00,\; 0.15,\;0.3$ with decreasing mesh size  $\Delta x=0.00625 $({\color{blue}\full}), $\Delta x=0.00625/2 $({\color{red}\dotted}) and $\Delta x=0.00625/4 $({\color{black}\dashed})}
			\label{fig:ex1}
		\end{figure} 
		The local counterpart of \eqref{eq:kk}, viz.
		\begin{equation}
			\label{eq:kk1}
			\partial_t U^k + \partial_x \left(\sigma(x) U^k \,  \nu (U^1,U^2) \right)= 0, \quad k\in\{1,2\},
		\end{equation} 
		has not been studied yet in the literature and is of independent interest. However, the case $\sigma\equiv 1,$ is well understood, see, for example, \cite{KR2013}. 
		\begin{figure}[h!]
			\centering \noindent\begin{minipage}{0.4\textwidth}
				\centering
				\begin{tabular}{|c|c|c|c|c|c|c|c|c|c|}\hline
					\multicolumn{1}{|c|}{ $ \frac{\D x}{0.00625}$}&\multicolumn{1}{|c|}{$e_{\Delta x}(T)$}\vline & \multicolumn{1}{|c|}{$\alpha$}\vline\\
					\hline
					$1$&$5.85e-4$&$1.32$\tabularnewline
					\hline
					$1/2$&$2.34e-4$&$1.31$\tabularnewline
					\hline
					$1/4$&$9.48e-5$&$1.18$\tabularnewline
					\hline
					$1/8$&$4.16e-5$&\tabularnewline
					\hline
				\end{tabular}
			\end{minipage}
			\noindent\begin{minipage}{0.5\textwidth}
				\includegraphics[width=\textwidth, trim = 40 25 20 5]{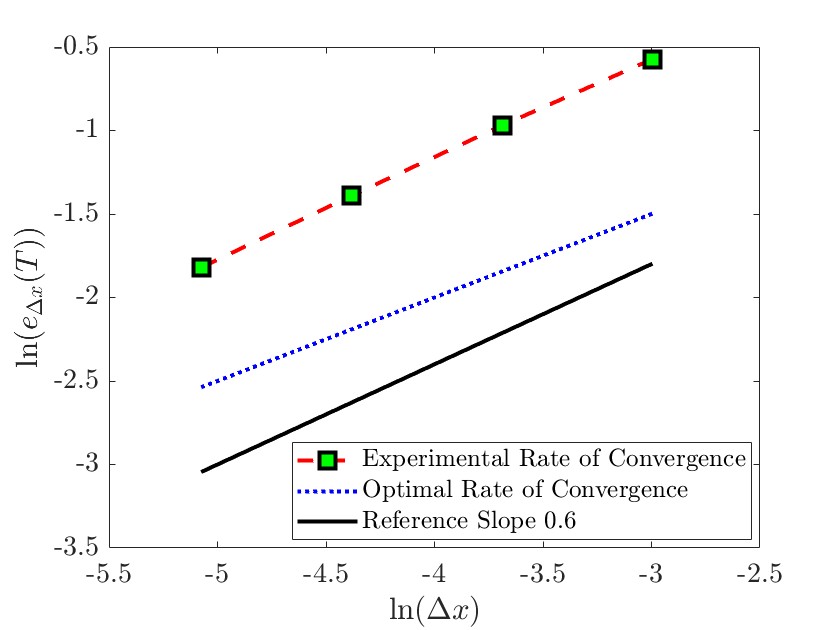}
			\end{minipage}  \caption{Convergence rate $\alpha$ for the numerical scheme~\eqref{scheme} on the domain $[0,\,4]$ at time $T=0.5$ for the approximate solutions to the problem~\eqref{eq:kk}, \eqref{eq:ex1}}\label{fig:my_label21}
		\end{figure}
		We present the numerical integration of~\eqref{eq:kk} on the domain
		$[0, \, 4]$ and the time
		interval $[0, \, 0.3]$ with
		\begin{align}
			\label{eq:ex1} U^1_0(x)=0.25\mathbbm{1}_{(1,3)}(x), \quad &
			U^2_0(x)=\mathbbm{1}_{(1,3)}(x).
		\end{align}
		Figure \ref{fig:ex1} displays the numerical approximations of \eqref{eq:kk} and \eqref{eq:ex1} generated by the numerical  
		scheme \eqref{scheme}, 
		with
		decreasing grid size $\Delta x$, starting with $\Delta x =0.00625$. It can be seen that the numerical scheme is able to capture both shocks and rarefactions well. Let $e_{\Delta x}(T)=\norma{\boldsymbol{U}_{\Delta x}(T,\dott)-\boldsymbol{U}_{\Delta x/2}(T,\dott)}_{(L^1(\R))^N}.$ The convergence rate reads $\alpha=\log_{2}(e_{\Delta x}(T)/{e_{\Delta x/2}(T)}).$
		Errors and the convergence rates at time $T=0.3$ obtained in the above experiment are recorded in
		Figure \ref{fig:my_label21}. The convergence rates thus obtained obey the claims of  Theorem \ref{CR}.
		The results recorded in 
		Figure \ref{fig:my_label21} show that the observed convergence rates exceed $1.$
		\subsection{Two dimensions}
		As pointed out earlier, the analysis of the paper fits in multiple space dimensions with $\boldsymbol{\sigma}\equiv 1$. To illustrate our results in two dimensions, we employ the two dimensional \textit{nonlocal} generalization of Keyfitz--Kranzer system,  introduced
		in~\cite{ACG2015}, given by
		\begin{equation}
			\label{eq:ex2}
			\left\{
			\begin{array}{l}
				\partial_t U^1
				+
				\partial_x \left(U^1 \,  \phi_1 (U^1 * \mu,U^2 * \mu)\right)
				+
				\partial_y \left(U^1 \,  \phi_2 (U^1 * \mu,U^2 * \mu)\right) = 0,
				\\
				\partial_t U^2
				+
				\partial_x \left(U^2 \,  \phi_1 (U^1 * \mu,U^2 * \mu)\right)
				+
				\partial_y \left(U^2 \,  \phi_2 (U^1 * \mu,U^2 * \mu)\right) = 0,
				\\
			\end{array}
			\right.
		\end{equation}
		which is a system of two nonlocal conservation laws in two space
		dimensions. Here, we choose
		$ \phi_1 (a,b) = \sin(a^2 + b^2),
		\phi_2 (a,b) = \cos(a^2+b^2)$
		and
		\begin{equation*}
			\label{eq:ex2:mu}
			\mu (x,y) =
			L\left(\eta^2 - (x^2 + y^2)\right)^3
			\mathbbm{1}_{\strut \{(x,y) \colon x^2 + y^2 \leq \eta^2\}} (x,y)\,,
		\end{equation*}
		where $L$ is chosen so that $\displaystyle\iint_{\R^2} \mu (x,y) \d{x} \d{y}=1$. The system fits into the setup of this article (cf.~\eqref{eq:umulA})  with $N=2, \boldsymbol{\mu} =
		\boldsymbol{\overline{\mu}}=\left[\begin{array}{ccc}
			\mu & \mu
			\\
			\mu & \mu
		\end{array}\right],\nu^k=\phi_1,\overline{\nu}^k=\phi_2, f^k(u)=g^k(u)=u$ and $\sigma^k=1.$ We refer to \cite{ACG2015} for the numerical integration of~\eqref{eq:ex2} on the domain
		the domain $[-1.1, \, 1.1]^2$ and to the time
		interval $[0, \, 0.1]$ with\\
		\noindent\begin{minipage}{0.45\linewidth}
			\includegraphics[width=\textwidth, trim = 40 25 20 5]{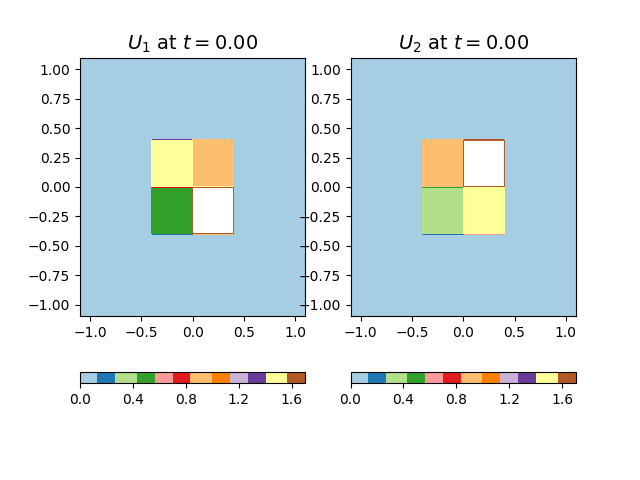}%
		\end{minipage}%
		\begin{minipage}{0.55\linewidth}
			\begin{equation}
				\label{eq:ex2:id}
				\!\!\!\!
				U_o (x,y)
				=
				\left\{
				\begin{array}{@{}ll@{}}
					(1, \, \sqrt{3}) &
					(x,y) \in \left] 0, \, 0.4 \right] \times \left]0, \, 0.4\right]
					\\
					(\sqrt{2}, \, 1) &
					(x,y) \in \left]-0.4, \, 0\right] \times \left]0, \, 0.4\right]
					\\
					(\frac{1}{2}, \, \frac{1}{3}) &
					(x,y) \in \left]-0.4, \, 0\right] \times \left]-0.4, \, 0\right]
					\\
					(\sqrt{3},\, \sqrt{2}) &
					(x,y) \in \left]0, \, 0.4\right] \times \left]-0.4, \, 0\right]
					\\
					(0, \, 0) &
					\mbox{elsewhere}
				\end{array}
				\right.
				\!\!\!\!\!\!\!\!\!\!\!\!
			\end{equation}
		\end{minipage}\\ \begin{figure}[h!]
			\centering
			\noindent\begin{minipage}{0.4\textwidth}
				\centering
				\begin{tabular}{|c|c|c|c|c|c|c|c|c|c|}\hline
					\multicolumn{1}{|c|}{ $\frac{\D x}{0.05}$}&\multicolumn{1}{|c|}{$e_{\Delta x}(T)$}\vline & \multicolumn{1}{|c|}{$\alpha$}\vline\\
					\hline
					$1$&$5.64e-1$&$0.57$\tabularnewline
					\hline
					$1/2$&$3.79e-1$&$0.60$\tabularnewline
					\hline
					$1/4$&$2.49e-1$&$0.62$\tabularnewline
					\hline
					$1/8$&$1.62e-1$&\tabularnewline
					\hline
				\end{tabular}
			\end{minipage}
			\noindent\begin{minipage}{0.5\textwidth}
				\includegraphics[width=\textwidth, trim = 40 25 20 5]{Pictures/error_2d_AHv2.jpg}
			\end{minipage}
			\caption{Convergence rate $\alpha$ for the numerical scheme~\eqref{scheme} on the domain $[-1.1, \, 1.1]^2$ at time $T=0.1$ for the approximate solutions
				to the problem~\eqref{eq:ex2}--\eqref{eq:ex2:id}.}\label{fig:my_label211}
		\end{figure}and present the convergence rate analysis in this paper, with $\eta=0.4.$ Errors and the convergence rates obtained in the experiment are recorded in Figure \ref{fig:my_label211}. The convergence rates thus obtained obey the claims of  Theorem \ref{CR}. The results recorded in Figures \ref{fig:my_label21}--\ref{fig:my_label211} show
		that the observed convergence rates exceed $0.5$.
		\bibliographystyle{siam}
		\bibliography{AHV_2}
	\end{document}